%% file: main.tex
\documentclass[12pt, oneside, reqno]{amsart}
\input{packages.tex}
\usepackage{stmaryrd}
\definecolor{B}{rgb}{0,0,1}

\definecolor{G}{rgb}{0,1,0}

\title[HDG Methods for the two-dimensional Vector Laplacian]{HDG Methods for the two-dimensional Vector Laplacian}
\author[B. Cockburn]{Bernardo Cockburn}
\address[1]{School of Mathematics, University of Minnesota, Minneapolis, MN 55455, USA}
\email{bcockbur@umn.edu}
\author[C. N\'unez]{Cristhian  N\'uñez}
\address[2]{School of Mathematics, University of Minnesota, Minneapolis, MN 55455, USA}
\email{cnunezra.umn.edu}
\author[M. A. S\'anchez]{Manuel A. S\'anchez}
\address[3]{Instituto de Ingenier\'ia Matem\'atica y Computacional, Facultad de Matem\'aticas y Escuela de Ingenier\'ia, Pontificia Universidad Cat\'olica de Chile, Santiago, Chile.}
\email{manuel.sanchez@uc.cl}
\thanks{C. Nuñez was supported by Doctorado Nacional Scholarship 2021, ANID Chile.}
\thanks{M.A. S\'anchez was partially supported by FONDECYT Regular grant N. 1221189 and by Centro Nacional de Inteligencia Artificial CENIA, FB210017, Basal ANID Chile.}
\date{\today}                        
\begin{document}
\begin{abstract}
We introduce new hybridizable discontinuous Galerkin (HDG) methods for solving the two-dimensional vector Laplacian equation under three types of boundary conditions: electric, magnetic, and Dirichlet. The method is formulated on a first-order system form of the equations, in which the rotational and divergence of the electric field are introduced as auxiliary variables. We study the well-posedness of the method and prove that, when using piecewise polynomial approximations of degree $k \geq 0$, the error in the $L^2$ norm of the electric field converges at the optimal rate of $k+1$.
Additionally, we prove that the $L^2$-errors of the auxiliary variables, the rotational and divergence, converge with order $k + 1/2$. We also show that the methods can be implemented in three different forms, corresponding to three distinct hybridizations based on the choice of the globally coupled unknowns among the numerical traces defined on the mesh skeleton. Finally, we provide numerical tests that not only validate the theoretical convergence rates but also consistently showcase the optimal convergence across all variables.
\end{abstract}
\maketitle
{\it Keywords:} Hybridizable discontinuous Galerkin (HDG) methods, vector Laplacian equation, rotational--divergence mixed formulation, error estimates, hybridization.
\section{Introduction}\label{section:Introduction} 
We introduce new HDG methods for the two-dimensional equation for the vector-Laplacian,
\begin{alignat}{4}
\label{eq:vector-laplacian}
\curl\,\rot \,\velocity-\grad \divergence \velocity&=\fb, \quad & \text{ in } \Omega.
\end{alignat}
where $\Omega \subset \Real^2$ is a bounded, polygonal, and simply connected domain with a Lipschitz boundary $\Gamma$. 
The simple connectedness of the domain guarantees the absence of harmonic functions solutions, which ensures that both the rotational and divergence operators have closed ranges and trivial kernels, and implies the existence and uniqueness of the continuous solution, see~\cite[Section~5.1]{11_DArnold}.

The two-dimensional vector Laplacian equation is a fundamental model in various areas of Physics and Engineering. For instance, in fluid dynamics, it models the viscous diffusion of the velocity field within the Stokes and Navier–Stokes equations \cite{23_Arnold}. In electromagnetism, its $\curl \, \rot$ operator is crucial to modeling electromagnetic wave propagation in a medium, as seen in the time-harmonic Maxwell equations \cite{1P_Monk}. Furthermore, in elasticity, it describes the deformation of thin elastic plates under external loads. 

The above problem can be deceptively simple, as it leads to the equation, 
\[
-\Delta \velocity = \fb.
\]
However, these are {\em not} decoupled scalar Laplacian equations, as the boundary conditions dictate the coupling of the vector components.

\begin{table}[h]
\centering
\small
\caption{Boundary conditions on $\Gamma$ for the primal formulation of \eqref{eq:vector-laplacian}.}
\label{tab:boundarycondition}
\renewcommand{\arraystretch}{1.2}
\begin{tabular}{@{}p{3cm}p{2.8cm}p{1.6cm}@{}}
\toprule
\hspace{0.2cm}Electric & Magnetic & Dirichlet \\
\midrule
\hspace{0.2cm}$\divergence \velocity=0$           & $\rot \, \velocity=0$         & \multirow{2}{1.8cm}{$\velocity=0$} \\[0.7ex]
\hspace{0.1cm} $\velocity \cdot \normal^\perp=0$  & $\velocity \cdot \normal=0$   & \\
\bottomrule
\end{tabular}
\end{table}

In Table \ref{tab:boundarycondition} we present the three types of boundary conditions considered. Together with the assumptions on the domain, these conditions guarantee the existence and uniqueness of the solution to problem \eqref{eq:vector-laplacian}, see \textit{e.g.} see~\cite[Theorem~4.8]{11_DArnold}. Electric boundary conditions, see the first column of Table \ref{tab:boundarycondition}, arise in electrostatic models involving dielectric or conducting interfaces \cite{electric}. These conditions require the tangential component of the vector field to vanish along the boundary and the field to remain divergence-free. In this case, the vector field $\velocity$  lies in the space $ \mathring{H}(\mathrm{rot}) \cap H(\mathrm{div}) $. Magnetic boundary conditions, see the middle column in Table \ref{tab:boundarycondition}, are relevant in magneto-mechanical applications \cite{magnetic}. Here, the normal component of the vector field must vanish on the boundary, and $\rot \, \velocity$ must be zero. The solution $ \velocity$ belongs to  $H(\mathrm{rot}) \cap \mathring{H}(\mathrm{div})$.  Note that solving the problem~\eqref{eq:vector-laplacian} with magnetic boundary conditions for $\velocity$ and external force $\fb$ is equivalent to solving the same problem with electric boundary conditions for $\velocity^\perp$ with the right-hand side $-\fb^\perp$. Finally, Dirichlet boundary conditions, third column in Table \ref{tab:boundarycondition}, are common in fluid dynamics to model no-slip boundaries \cite{no-slip}. In this case, the velocity field vanishes entirely on the boundary, and the corresponding solution space is \( \mathring{H}(\mathrm{rot}) \cap \mathring{H}(\mathrm{div}) \).

\subsection*{Historical overview} To better describe the relevance of the material presented here, let us place it as part of the historical development of numerical methods for this equation.

\subsubsection*{Exterior Calculus and mixed methods} Mixed finite element methods for the vector Laplacian \eqref{eq:vector-laplacian} have been proposed and analyzed under the Finite Element Exterior Calculus (FEEC) framework applied to the Hodge Laplacian, specifically, acting on 1-forms;  see, \textit{e.g.}, \cite{42_Arnold,11_DArnold,12_DArnold,10_DArnold}. FEEC provides a systematic approach to constructing stable finite element methods that preserve essential differential structures in the De Rham complex. In~\cite{23_Arnold}, the authors consider the two-dimensional vector Laplacian using a mixed formulation with mixed variable $\sigma = \rot \, \velocity$. They use Raviart--Thomas elements of degree $k$ for the approximation of the vector field $\velocity$ and continuous Lagrange elements of the same degree for $\sigma$. Numerical experiments showed that the method achieves optimal convergence rates in the $L^2$-norm: $k$ for $\velocity$ and $k+1$ for $\sigma$ under electric and magnetic boundary conditions. However, under Dirichlet boundary conditions, the convergence rate for $\sigma$ deteriorates significantly, reducing to $k-1/2$. This loss of convergence order emerges from the fact that the differential operators do not preserve the inclusion of solution spaces in the De Rham complex, resulting in a loss of coercivity of the Laplacian operator and affecting the approximation of the auxiliary variable. Later, in 2023, Awanou, Fabien, Guzmán, and Stern~\cite{14_JGuzman} introduced a hybrid conforming method within the FEEC framework. In the 2D case, the error analysis relies on~\cite{42_Arnold}. It employs Brezzi--Douglas--Marini elements of order $k$ or Raviart--Thomas elements of order $k$ for the vector field, and continuous Lagrange elements of degree $k$ for the auxiliary variable, under electric or magnetic boundary conditions. They obtain optimal convergence for all variables.

\subsubsection*{Non-conforming methods}
On the other hand, in 2008, Brenner, Cui, Li, and Sung~\cite{4_Brenner} introduced a nonconforming finite element method for two-dimensional $\curl$–$\rot$ and $\grad$– $\divergence$ problems. The method employs weakly continuous piecewise linear elements and includes consistency terms to control the jumps in both tangential and normal components across element interfaces. It achieves a convergence rate of $2 - \varepsilon$ in the $L^2$-norm, where $\varepsilon$ depends on the singularities present in non-convex domains. The authors propose mesh refinements near reentrant corners to mitigate the effects of these singularities. Then, between 2022 and 2024, Barker, Cao, and Stern~\cite{6_Barker,7_Barker} proposed a primal hybrid nonconforming method as an extension of the original approach by Brenner et al.~\cite{4_Brenner} for higher degrees. This method avoids explicit auxiliary variables and instead uses numerical traces and Lagrange multipliers to weakly enforce continuity across elements. It employs discontinuous Galerkin elements of degree $2k-1$ for the vector field $ \velocity$, polynomials of the same degree for the numerical traces, and degree $k-1$ polynomials for the Lagrange multipliers. This setup achieves convergence rates of $ k + 1 $  for \( \velocity \) under electric boundary conditions.
 The method supports static condensation, but its accuracy and stability are sensitive to the choice of penalty parameters. 

\subsubsection*{Other mixed methods} The recent article~\cite{18_HWang} introduced a $\rot$--$\divergence$ mixed method that reformulates the two-dimensional vector Laplacian as a first-order system involving the rotational and divergence as auxiliary variables. The method uses Brezzi--Douglas--Marini elements of degree $k-1$ for $\velocity$ and continuous Lagrange elements of degree $k$ for the auxiliary variables. The error analysis is carried out only for Dirichlet boundary conditions, where the method achieves convergence rates: $k$ for $\velocity$ and $k-1/2$ for the auxiliary variables. Moreover, numerical experiments under electric and magnetic boundary conditions show optimal convergence for all variables.

\subsubsection*{HDG methods} The introduction of the rotational operator in 2D (or $\curl$ in 3D) as an auxiliary variable is usual in mixed formulations involving the vector Laplacian. The error analysis of HDG methods for these formulations follows a common structure, which requires the introduction of suitable projection operators for both the vector field and the auxiliary variables. For example, Cui and Cockburn~\cite{36_Cui} proposed in 2012 an HDG method for the vorticity–velocity–pressure formulation of the Stokes problem, using polynomials of degree $k$ for all variables. Their analysis shows optimal convergence of order $k+1$ for velocity and suboptimal convergence of order $k+1/2$ for vorticity and pressure, based on a combination of the HDG projection from \cite{8_Cockburn} and the standard $L^2$-projection. Similarly, in the context of the eddy current problem in heterogeneous media, Bustinza and Osorio~\cite{31_Osorio} proposed in 2018 an HDG scheme that incorporates a Lagrange multiplier to enforce the divergence-free condition. Using polynomials of degree $k$ for all variables, they proved convergence of order $k+1$ for the electromagnetic vector field and $k+1/2$ for auxiliary variables, with an analysis based on projection operators similar to those in \cite{36_Cui}. 

Continuing in a similar vein, Nguyen, Peraire, and Cockburn~\cite{41_Cockburn} introduced in 2011 two HDG schemes for the $\curl$-formulation of Maxwell equations, one of which incorporates a Lagrange multiplier to enforce the divergence constraint. Both methods exhibit numerically optimal convergence of order $k+1$ for the electric field and the $\curl$ variable, although theoretical guarantees for this convergence remain open when using polynomials of degree $k$ for all variables. In 2020, Du and Sayas~\cite{37_Shukai} presented a unified framework for static Maxwell equations, covering four HDG methods. Their formulation employs polynomial spaces of degree \( k + 1 \) for the vector field \( \velocity \) and degree \( k \) for the auxiliary variable \( \curl\, \velocity \), while distinguishing between different choices for the polynomial spaces used for the numerical traces and for the Lagrange multiplier \( p \), which is introduced to better control of \( \divergence \velocity \). All HDG variants they analyzed achieve optimal convergence of order \( k + 1 \) in the \( L^2 \)-norm for the auxiliary variable, and convergence of order \( k + 2 \) for the vector field. We obtain these results thanks to the careful design of stabilization terms and projection operators that align with the HDG structure.

Among recent variants of HDG methods, Chen, Qiu, Shi, and Solano~\cite{9_MSolano} proposed in 2017 a superconvergent HDG scheme for the $\curl-$formulation for steady-state Maxwell equations. By using polynomials of degree $k+1$ to approximate the vector field and degree $k$ for the auxiliary variables, along with appropriate stabilization parameters, they proved a convergence order of $k+2$ for the vector field without post-processing. Their analysis combines $L^2$-projections with a BDM-type projection. 
These recent advances in HDG approaches motivate the current development of methods for the vector Laplacian along this line of research.

\subsection*{The new HDG methods} In this paper, we derive HDG methods for a mixed formulation of the vector Laplacian \eqref{eq:vector-laplacian} using  the rotational and the divergence, namely,
\begin{subequations}\label{eq:MixedProblem}
\begin{alignat}{4}\label{eq:MixedProblem:a}
\sigma -\rot\,\velocity &=0, \quad & \text{ in } \Omega, \\ \label{eq:MixedProblem5::b}
\phi+ \divergence \velocity &=0, \quad & \text{ in } \Omega,  \\  \label{eq:MixedProblem:c}
\curl\,\sigma+\grad\,\phi &=\vect{f}, \quad & \text{ in } \Omega. 
\end{alignat}
\end{subequations}
The mixed formulation above differs from others, such as those in \cite{42_Arnold,23_Arnold,14_JGuzman}, because of the incorporation of the rotational $\sigma$ and the divergence $\phi$ of the field $\velocity$. This reformulation reduces the second-order equation \eqref{eq:vector-laplacian} to a fully first-order system \eqref{eq:MixedProblem}. 

Furthermore, \eqref{eq:MixedProblem} is complemented by the boundary conditions detailed in Table \ref{tab:table:boundarycondition:mixed_problem}, which correspond to those specified for the primal formulation in Table \ref{tab:boundarycondition}.
\begin{table}[h]
\centering
\small
\caption{Boundary conditions on $\Gamma$ for the mixed formulation \eqref{eq:MixedProblem}.}
\label{tab:table:boundarycondition:mixed_problem}
\renewcommand{\arraystretch}{1.2}
\begin{tabular}{@{}p{3cm}p{2.8cm}p{1.6cm}@{}}
\toprule
\hspace{0.2cm}Electric & Magnetic & Dirichlet \\
\midrule
\hspace{0.2cm} $\phi=0$           & $\sigma=0$         & \multirow{2}{1.8cm}{$\velocity=0$} \\[0.7ex]
\hspace{0.2cm} $\velocity \cdot \normal^\perp=0$  & $\velocity \cdot \normal=0$   & \\
\bottomrule
\end{tabular}
\end{table}

To the best of our knowledge, HDG methods for the two-dimensional vector Laplacian have not been previously studied. In this paper, we propose and analyze HDG methods using
polynomials of degree $k$ for all variables. 
We show that the methods can be implemented in three different forms of static condensation, which reduces the number of globally-coupled degrees of freedom. One of these formulations is particularly useful, as it is associated with a quadratic minimization problem.

To better understand the characteristics of the proposed HDG methods, we compare it with existing approaches. In particular, we contrast the different methods in terms of element type, approximation spaces, and order of convergence. We begin with the type of elements used, summarized in the first column of Table~\ref{tab:comparison}. The empty entries in the table correspond to quantities that cannot be compared. The methods in~\cite{23_Arnold,18_HWang} are formulated on triangular meshes, while~\cite{6_Barker,7_Barker} consider polygonal meshes. Our HDG method is also implemented on polygonal meshes. Subsequently, we examine the approximation spaces corresponding to each method, which are summarized in the remaining columns of Table~\ref{tab:comparison}.
\begin{table}[h]
\centering
\small
\renewcommand{\arraystretch}{1.8}
\begin{tabular}{>{\centering\arraybackslash}p{3.5cm} >{\centering\arraybackslash}p{4cm} c c c}
\toprule
\multirow{2}{*}{Method} 
& \multirow{2}{*}{Element} 
& \multicolumn{3}{c}{Spaces} \\
& & $\velocity$ & $\sigma$ & $\phi$ \\
\midrule
Arnold et al.~\cite{23_Arnold} & triangles & $\operatorname{RT}_k$ & $\operatorname{CG}_k$ & $-$ \\
Wang et al.~\cite{18_HWang} & triangles & $\operatorname{BDM}_{k-1}$ & $\operatorname{CG}_k$ &  $\operatorname{CG}_k$\\
Barker et al.~\cite{6_Barker,7_Barker} & polygons & $\boldsymbol{\mathcal{P}}_{2k-1}$ & $-$ & $-$ \\
HDG &  polygons & $\boldsymbol{\mathcal{P}}_{k}$ & $\mathcal{P}_{k}$ & $\mathcal{P}_{k}$ \\
\bottomrule
\end{tabular}
\caption{Comparison of element types and approximation spaces.}
\label{tab:comparison}
\end{table}

The method of Arnold et al.~\cite{23_Arnold} uses Raviart--Thomas spaces $\operatorname{RT}_k$ for the velocity and continuous Galerkin spaces $\operatorname{CG}_k$ for the auxiliary variable $\sigma$. The formulation proposed in~\cite{18_HWang} employs $\operatorname{BDM}_{k-1}$ spaces for the velocity together with $\operatorname{CG}_k$ spaces for both $\sigma$ and $\phi$. The hybrid primal methods introduced in~\cite{6_Barker,7_Barker} approximate the velocity using local vector polynomial spaces of degree less than or equal to $2k-1$, $\boldsymbol{\mathcal{P}}_{2k-1}$, and do not introduce the auxiliary variables $\sigma$ and $\phi$. In contrast, HDG methods use polynomial spaces of degree less than or equal to $k$ for vector and scalar variables, namely $\boldsymbol{\mathcal{P}}_{k}$ for the velocity and $\mathcal{P}_{k}$ for both $\sigma$ and $\phi$.

Finally, we compare the orders of convergence, summarized in Table~\ref{tab:comparison2}.  For the HDG formulation, we prove that the velocity converges with order $k+1$, while the approximations of $\sigma$ and $\phi$ converge with order $k+1/2$. Notably, these convergence rates are independent of the type of boundary conditions considered. This behavior contrasts with the method proposed in~\cite{23_Arnold}, which exhibits a loss of convergence order for the approximation of $\sigma$ under Dirichlet boundary conditions, where the rate reduces to $k-1/2$. A similar suboptimal behavior for the variables $\sigma$ and $\phi$ is also observed in the formulation of Wang et al.~\cite{18_HWang}. By contrast, the HDG method maintains the same convergence orders across electric, magnetic, and Dirichlet boundary conditions.

\begin{table}[h]
\centering
\small
\setlength{\tabcolsep}{6pt}
\renewcommand{\arraystretch}{1.8}
\begin{tabular}{
>{\centering\arraybackslash}p{3.3cm}
c c c @{\hspace{25pt}}
c c c @{\hspace{25pt}}
c c c}
\toprule
\multirow{2}{*}{Method} 
& \multicolumn{3}{c}{Electric} 
& \multicolumn{3}{c}{Magnetic}
& \multicolumn{3}{c}{Dirichlet} \\


& $\velocity$ & $\sigma$ & $\phi$
& $\velocity$ & $\sigma$ & $\phi$
& $\velocity$ & $\sigma$ & $\phi$ \\

\midrule

Arnold et al.~\cite{23_Arnold} & $k$ & $k+1$ & $-$
  & $k$ & $k+1$ & $-$
  & $k$ & $k-\tfrac{1}{2}$ & $-$ \\

Wang et al.~\cite{18_HWang} & $k$ & $k+1$ & $k+1$
  & $k$ & $k+1$ & $k+1$
  & $k$ & $k-\frac{1}{2}$ & $k-\tfrac{1}{2}$ \\

Barker et al.~\cite{6_Barker,7_Barker} & $-$ & $-$ & $-$
  & $k+1$ & $-$ & $-$
  & $-$ & $-$ & $-$ \\

HDG & $k+1$ & $k+\tfrac{1}{2}$ & $k+\tfrac{1}{2}$
  & $k+1$ & $k+\tfrac{1}{2}$ & $k+\tfrac{1}{2}$
  & $k+1$ & $k+\tfrac{1}{2}$ & $k+\tfrac{1}{2}$ \\

\bottomrule
\end{tabular}
\caption{Comparison of the order of convergence.}
\label{tab:comparison2}
\end{table}

\subsection*{Organization of the paper} The remainder of this paper is organized as follows. In Section \ref{section:TheHDGMethod}, we introduce the HDG method, present three different hybridizations, derive \textit{a priori} error estimates in the $L^2$--norm, and discuss the characterization of the HDG solution. Section \ref{sec:Proofsofthemainresults} provides the proofs for these theoretical results. In Section \ref{section:NumericalExperiments}, we validate our findings through numerical experiments. Finally, Section \ref{section:conclussions_and_ongoing_work} contains our conclusions and outlines directions for future research.


\section{Main results}\label{section:TheHDGMethod}
In this section, we introduce HDG methods for approximating the solution of the vector Laplacian problem written in the rotational–-divergence mixed formulation \eqref{eq:MixedProblem}. We approximate the vector field, its rotational and divergence, as well as the traces on the skeleton of the tangential and normal components of the vector field. We begin by introducing some standard notation, and then in Section \ref{ss:hdgmethodsandhybridizations}, we present the global numerical scheme and three types of hybridizations. In Section \ref{subsec:MainResults:Apriori_error_estimates}, we present the \textit{a  priori} error estimates and in Section \ref{subsection:Main Results:Hybridization_of_HDG_methods} we present the well-posedness of the local problems and characterization of the HDG solution.

\subsection{Notation}\label{section:TheHDGMethod:subsection:Notation} 
We begin this section by introducing standard notation for functional spaces and differential operators
\begin{alignat*}{4}
H(\operatorname{rot} )&:=\{\zb\in \Lb^2(\Omega): \rot \, \zb \in L^2(\Omega) \},& \quad H(\operatorname{div})&:=\{\zb\in \Lb^2(\Omega): \divergence \zb \in L^2(\Omega) \},\\
\mathring{H}(\operatorname{rot})&:=\{\zb\in H(\operatorname{rot}): \zb\cdot \normal^\perp=0\},
& \quad \mathring{H}(\operatorname{div})&:=\{\zb\in H(\operatorname{div}): \zb \cdot \normal=0 \},
\end{alignat*}
where $L^2(\Omega)$ is the standard space of the square integrable function in $\Omega$ and $\Lb^2(\Omega)$ denotes the space of square integrable vector functions. The operator $()^\perp$ is defined for a two-dimensional vector field as its counter-clockwise rotation, i.e., $(a,b)^\perp:=(b,-a)$. We also define for smooth scalar and vector functions $\varphi$  and $\vect{z}=(z_1,z_2)$,
$$
\begin{aligned}
\grad \varphi:=(\frac{\partial \varphi}{\partial x}, \frac{\partial \varphi}{\partial y}), \quad \curl \,\varphi & :=(\frac{\partial \varphi}{\partial y},-\frac{\partial \varphi}{\partial x}), \\ \divergence \vect{z}:=\frac{\partial z_{1}}{\partial x}+\frac{\partial z_{2}}{\partial y}, \quad
\operatorname{rot} \vect{z} & :=\frac{\partial z_{2}}{\partial x}-\frac{\partial z_{1}}{\partial y}.
\end{aligned}
$$

Let $\Th$ be a quasi-uniform mesh \cite[Definition 4.4.13]{MR2373954} consisting of disjoint elements that partition $\overline{\Omega}$, with mesh size parameter $h$, the maximum inner diameter among all elements in $\Th$. Let $\Faces$ denote the set of all edges, with $\Facesinterior$ and $\Facesboundary$ representing the subsets of the interior and boundary edges, respectively. For an element $K \in \Th$, we denote by $\partial K$ the set of edges that form its boundary, and then $\Skeleton$ denotes the union of all the boundaries of the elements in the mesh $\Th$, and is commonly called the mesh skeleton.

Let $\Kprod{\cdot}{\cdot}$ and $\pKprod{\cdot}{\cdot}$ be the $L^2$--inner products over $K$ and $\partial K$, respectively.    
In addition, the piecewise $L^2$--inner products are defined as
\[
\Thprod{\cdot}{\cdot}:=\sum_{K\in \Th} \Kprod{\cdot}{\cdot}, \quad
\pThprod{\cdot}{\cdot} :=\sum_{K\in \Th}\pKprod{\cdot}{\cdot}.
\]
Analogous definitions apply to vector-valued functions. The norms induced by the inner products above are denoted by $\Thnorm{\cdot}$ and $\pThnorm{\cdot}$.

Define the jumps of the scalar and vector function $\varphi$ and $\vect{z}$, in the normal and tangential directions across the interior edge $\partial K^{-}\cap \partial K^{+}=:F \in \Facesinterior$, for $K^{-}, K^{+}\in \Th$, by
\begin{alignat*}{4}
   \jump{\varphi \, \normal}&:=\varphi^{+}  \normal^{+}+\varphi^{-}  \normal^{-},  & \quad   \jump{\varphi \, \normal^\perp}&:=\varphi^{+} (\normal^\perp )^+ +\varphi^{-} (\normal^\perp)^{-},\\   
   \jump{\zb \cdot \normal}&:=\zb^{+} \cdot \normal^{+}+\zb^{-} \cdot \normal^{-}, & \quad   \jump{\zb \cdot \normal^\perp}&:=\zb^{+}\cdot (\normal^\perp )^+ +\zb^{-}\cdot (\normal^\perp)^{-},
\end{alignat*}
where $\normal^{\pm}$ are the outward unit normals to $K^{\pm}$, and  $\varphi^{\pm}=\varphi|_{K^{\pm}}$ and $\zb^{\pm}=\zb|_{K^{\pm}}$. On boundary edges $F\in \Facesboundary$, we simply define
\begin{alignat*}{4}
 \jump{\varphi \, \normal}&:=\varphi \, \normal,  & \quad   \jump{\varphi \, \normal^\perp}&:=\varphi \, \normal^\perp,&\quad
   \jump{\zb \cdot \normal}&:=\zb\cdot \normal, & \quad   \jump{\zb \cdot \normal^\perp}&:=\zb \cdot \normal^\perp.
\end{alignat*}
We say that the $\varphi$ is single-valued if $\jump{\varphi \, \normal}=0$ and
$\jump{\varphi \, \normal^\perp}=0$ on $\Facesinterior$. Similarly, $\zb$ is single-valued if $\jump{\zb \cdot \normal}=0$ and
$\jump{\zb \cdot \normal^\perp}=0$ on $\Facesinterior$.

Finally, we introduce the following discontinuous finite-dimensional approximation spaces. First, we introduce $\mathcal{P}_{k}(A)$  and $\boldsymbol{\mathcal{P}}_{k}(A)$  as the scalar and vector polynomial spaces of degree less than or equal to $k\geq 0$ respectively, defined over the domain $A$. Then, we define the following scalar and vector volumetric approximation function spaces
\begin{eqnarray*}
{W}_{h}&:=&\left\{ \varphi \in L^{2}(\Omega): \;\varphi |_{K} \in \PK, \;\forall K \in \mathcal{T}_{h}\right\},\\
\vect{V}_{h}&:=&\left\{ \zb \in \Lb^{2}(\Omega):\;\zb|_{K} \in  \PKb, \;\forall K \in \mathcal{T}_{h}\right\},
\end{eqnarray*}
and the approximation spaces for the numerical traces 
\begin{eqnarray*}
M_{h}&:=&\left\{ \mu \in L^{2}(\Faces) :\mu|_{F} \in \mathcal{P}_k(F)\; \text{on } \Facesinterior \text{ and }  \mu|_{F}=0\; \text{for }F\in \Facesboundary \right\},\\
\vect{M}_h&:=&  \{ \mub \in \mathbf{L}^2(\partial \mathcal{T}_h) \, : \, \mub|_{F} \in \boldsymbol{\mathcal{P}}_k(F) \;\; \forall F\in \partial K,\; K \in \mathcal{T}_h \}, \\
\Nhperp&:=& \{ \mub\in \vect{M}_h:   \mub \cdot\normal=0  \text{ on $\Faces$} \text{ and } (\jump{\mub \cdot \normal^\perp}=0 \text{ on $\Facesinterior$} \; \wedge \;\mub \cdot \normal^\perp=0 \text{ on $\Facesboundary$} )\},\\
 \Nh&:= &\{ \mub\in \vect{M}_h: \mub \cdot\normal^\perp=0  \text{ on $\Faces$} \text{ and }( \jump{\mub \cdot \normal}=0 \text{ on $\Facesinterior$}\;\wedge\; \mub \cdot \normal=0 \text{ on $\Facesboundary$} )\}. 
\end{eqnarray*} 
\subsection{The HDG method and its hybridizations}\label{ss:hdgmethodsandhybridizations}
Next, we derive the HDG method and its three types of hybridization. The HDG formulation consists of two main components: a local problem and a global problem. The first seeks approximations of the solution within each element $K$ of the mesh $\Th$, given boundary conditions imposed through functions called numerical traces-- interpreted as data for the local problems. The global problem characterizes these traces by enforcing weak continuity conditions at element edges. The three hybridization types arise from interchanging the roles of the numerical traces as global unknowns, according to the type of boundary condition imposed on the local problem.

To obtain the HDG method, we multiply the rotational-velocity mixed formulation \eqref{eq:MixedProblem} by appropriate test functions. By applying integration by parts, we arrive at the following system that defines an approximation 
\((\sigma_h, \phi_h, \velocity_h) \in \Wh \times \Wh \times \Vh\)
as the solution of
\begin{subequations}\label{eq:MixedFormulation:Weakform}
\begin{alignat}{4} \label{eq:MixedFormulation:Weakform:a}
\Thprod{\sigma_h}{\chi}-\Thprod{\velocity_h}{\curl \, \chi} + \pThprod{\velocitycheck_h\cdot \normal^\perp}{\chi}& =0,    \\  \label{eq:MixedFormulation:Weakform:b}
\Thprod{\phi_h}{\varphi}-\Thprod{\velocity_h}{\grad\varphi}+\pThprod{\velocityhat_h\cdot \normal}{\varphi
} & =0, 
\\  \label{eq:MixedFormulation:Weakform:c}
\Thprod{\sigma_h}{\rot \, \zb} +
\pThprod{\sigmacheck_h}{\zb \cdot \normal^\perp} - 
\Thprod{\phi_h}{\divergence \zb} +
\pThprod{\phihat_h}{ \zb \cdot \normal } 
& =\Thprod{\fb}{ \zb},
\end{alignat}
for all $(\chi,\varphi,\zb) \in  \Wh \times \Wh\times \Vh$. The HDG numerical traces introduced above $\widecheck{\sigma}_h$, $\widehat{\phi}_h$, $\widecheck{\vect{u}}_h$ and $\widehat{\vect{u}}_h$, are approximations of the solution at the skeleton of the triangulation and satisfy 
\begin{alignat}{4} \label{eq:MixedFormulation:Weakform:d}
     (\velocitycheck_h-\velocity_h)\cdot \normal^\perp & = \alpha(\sigma_h-\sigmacheck_h) &&\qquad \text{on } \partial \mathcal T_h,\\ \label{eq:MixedFormulation:Weakform:e}
    (\velocityhat_h-\velocity_h)\cdot \normal & =\tau(\phi_h-\phihat_h) &&\qquad \text{on } \partial \mathcal T_h,
\end{alignat}
where the stabilization parameters $\alpha > 0$ and $\tau > 0$ are constants over the edges of the mesh.

The following two equations, which impose global continuity of the numerical fluxes, complete the definition of the method
\begin{alignat}{4} \label{eq:MixedFormulation:Weakform:f}
    \pThprod{\sigmacheck_h}{\mub^t \cdot \normal^\perp}=0,& \quad \forall \mub^t \in \Nhperp,\\ \label{eq:MixedFormulation:Weakform:g}
    \pThprod{\phihat_h}{\mub^n  \cdot \normal}=0,& \quad \forall \mub^n \in \Nh.    
\end{alignat}
\end{subequations}
In addition, the HDG method enforces the boundary conditions as indicated in Table~\ref{tab:BCfortraces}.
\begin{table}[h]
\centering
\small
\caption{Boundary conditions on $\Gamma$ for \eqref{eq:MixedFormulation:Weakform}.}
\renewcommand{\arraystretch}{1.2}
\begin{tabular}{@{}p{3cm}p{2.8cm}p{2cm}@{}}
\toprule
\hspace{0.2cm}Electric & Magnetic & Dirichlet \\
\midrule
\hspace{0.2cm} $\phihat_h=0$           & $\sigmacheck_h=0$         & $\velocitycheck_h\cdot \normal^\perp=0$  \\
\hspace{0.2cm} $\velocitycheck_h \cdot \normal^\perp=0$  & $\velocityhat_h \cdot \normal=0$   &  $\velocityhat_h\cdot \normal=0$ \\
\bottomrule
\end{tabular} \label{tab:BCfortraces}
\end{table}

We can write the global equations \eqref{eq:MixedFormulation:Weakform:f} and \eqref{eq:MixedFormulation:Weakform:g} in alternative forms depending on the choice of trace unknowns.  Specifically, we introduce three different HDG hybridizations based on distinct selections of trace variables, as summarized in the second column of Table~\ref{tab:KindsHDGMethods}.  These choices lead to different strategies for enforcing the transmission conditions that guarantee continuity of the numerical fluxes across element interfaces. The trace unknowns are determined through the jump conditions listed in the third column of the same table.  

In Hybridization of Type I, the global unknowns are $\velocitycheck_h$ and $\phihat_h$; Hybridization of Type II uses $\sigmacheck_h$ and $\velocityhat_h$; and Hybridization of Type III selects $\velocitycheck_h$ and $\velocityhat_h$. Although each formulation involves a different combination of trace unknowns, all of them are equivalent and lead to global systems after hybridization.  The choice of trace variables determines which weak continuity conditions are imposed across the mesh skeleton and reflects the physical constraints that the formulation aims to preserve.
\begin{table}[h]
\centering
\small
\caption{Unknowns and jump conditions for each hybridization.}
\label{tab:KindsHDGMethods}
\begin{tabular}{@{}p{6cm}p{3cm}p{3cm}@{}}
\toprule
\multicolumn{2}{r}{Trace unknowns \phantom{ooooo}} & Jump conditions \\
\midrule
\multirow{2}{*}{\hspace{0.2cm}Hybridization of Type I} 
& $\velocitycheck_h \in \Nhperp$ 
& $\jump{\sigmacheck_h \, \normal^\perp} = 0$ \\
& $\phihat_h \in M_h$ 
& $\jump{\velocityhat_h \cdot \normal} = 0$ \\
\addlinespace[2ex]
\multirow{2}{*}{\hspace{0.2cm}Hybridization of Type II} 
& $\sigmacheck_h \in M_h$ 
& $\jump{\velocitycheck_h \cdot \normal^\perp} = 0$ \\
& $\velocityhat_h \in \Nh$ 
& $\jump{\phihat_h \, \normal} = 0$ \\
\addlinespace[2ex]
\multirow{2}{*}{\hspace{0.2cm}Hybridization of Type III} 
& $\velocitycheck_h \in \Nhperp$ 
& $\jump{\sigmacheck_h \, \normal^\perp} = 0$ \\
& $\velocityhat_h \in \Nh$ 
& $\jump{\phihat_h \, \normal} = 0$ \\
\bottomrule
\end{tabular}
\end{table}

Global equations \eqref{eq:MixedFormulation:Weakform:f}--\eqref{eq:MixedFormulation:Weakform:g} correspond to the Hybridization of Type III (see the third row of Table~\ref{tab:KindsHDGMethods}), where these equations arise from the associated jump conditions.  The global equations for Hybridization of Type I and Hybridization of Type II can be derived analogously from the jump conditions listed in the first and second rows of Table~\ref{tab:KindsHDGMethods}, respectively. 

Without loss of generality, and unless stated otherwise, we conduct all subsequent analysis, proofs, and results under Dirichlet boundary conditions using the Hybridization of Type III as the reference formulation.
\subsection{Main Results: \textit{A priori} error estimates}\label{subsec:MainResults:Apriori_error_estimates}
In this section, we present the convergence and accuracy results of the scheme \eqref{eq:MixedFormulation:Weakform} in the $L^2$-norm. First, we introduce the estimates for the approximations of the auxiliary variables, the divergence $\phi$ and the rotational $\sigma$ and their respective numerical fluxes, and then present the estimate for the approximation of the vector field $\vect{u}$ using a duality argument. We use this argument to obtain an improved convergence order for the approximation of $\velocity$, see related works \cite{1P_Monk,8_Cockburn,18_HWang}. To this end, we introduce the dual problem: for any $\Theta \in \Lb^2(\Omega)$, we seek $\eta$, $\Phi$, and $\vect{\Psi}$ such that 
\begin{subequations}\label{eq:dual-problem}
 \begin{alignat}{4} \label{eq:dual-problem:a}
\eta-\rot \,\vect{\Psi}&=0,&\quad \text{ in }\Omega, \\ \label{eq:dual-problem:b}
\Phi + \divergence\,\vect{\Psi}&=0, &\quad \text{ in }\Omega,\\ \label{eq:dual-problem:c}
\curl\,\eta+\grad\,\Phi &=\Theta,& \quad \text{ in }\Omega,\\
\label{eq:dual-problem:d}
\vect{\Psi}  &=0,& \quad \text{ on }\Gamma,
\end{alignat}
and we also assume that the solution admits the following regularity estimate
  \end{subequations}
  \begin{eqnarray}\label{eq:dual-problem:estimate}
 \hspace{2cm}     \| \Phi \|_{H^1(\Omega)}+      \| \vect{\Psi} \|_{\vect{H}^2(\Omega)}+ \| \eta \|_{H^1(\Omega)}\leq C \| \Theta \|_{L^2(\Omega)}, 
  \end{eqnarray}
for $C>0.$ The above estimate follows from~\cite{17_BKellogg} for convex domains in two dimensions. We are now ready to state our first main result.
\begin{theorem} \label{Teo:A_priori_error_estimates} (\textit{A priori} error estimates) Assume that the solution of \eqref{eq:MixedProblem} satisfies  $\sigma \in H^{t}(\Omega)$, $\phi \in H^{r}(\Omega)$, and $\velocity \in \vect{H}^{s}(\Omega)$, 
and that $\sigma_h, \phi_h \in \Wh$, and $\velocity_h \in \Vh$  are the solutions of \eqref{eq:MixedFormulation:Weakform},  then
\begin{subequations}\label{eq:Theorem:A_priori_error_estimate}
\begin{alignat}{4} \label{eq:Theorem:A_priori_error_estimate:a}
    \Thnorm{\sigma-\sigma_h}+    \Thnorm{\phi-\phi_h}&\leq C ( h^{s-\frac{1}{2}} \Hs{\velocity} +h^{t-\frac{1}{2}} \Ht{\sigma} + h^{r-\frac{1}{2}} \Hr{\phi}  ),\\ \label{eq:Theorem:A_priori_error_estimate:a2}
 \pThnorm{\sigma-\sigmacheck_h}+  \pThnorm{\phi-\phihat_h}&\leq C(h^{s-1}\Hs{\velocity}+h^{t-1}\Ht{\sigma} +h^{r-1}\Hr{\phi}),     
    \end{alignat}
for $1\leq s,t,r \leq k+1$.\\ 
Furthermore, if the regularity estimate \eqref{eq:dual-problem:estimate} is verified for the solution of the dual problem \eqref{eq:dual-problem}, then 
    \begin{alignat}{4} \label{eq:Theorem:A_priori_error_estimate:b}
        \Thnorm{\velocity-\velocity_h}&\leq C ( h^{s} \Hs{\velocity} +h^{t} \Ht{\sigma} + h^{r} \Hr{\phi}  ).
\end{alignat}

\end{subequations}
\end{theorem}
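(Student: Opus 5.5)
The proof follows the standard HDG error-analysis pattern built on $L^2$-projections. I write $\Pi_W$ for the $L^2$-projection onto $\Wh$, $\boldsymbol{\Pi}_V$ for the $L^2$-projection onto $\Vh$, and $P_M$ for the $L^2$-projection onto traces. The projection errors are
\[
e_\sigma:=\Pi_W\sigma-\sigma_h,\quad e_\phi:=\Pi_W\phi-\phi_h,\quad \vect{e}_u:=\boldsymbol{\Pi}_V\velocity-\velocity_h,
\]
together with the trace errors $\widecheck{e}_\sigma:=P_M\sigma-\sigmacheck_h$, $\widehat{e}_\phi:=P_M\phi-\phihat_h$, $\widecheck{\vect{e}}_u\cdot\normal^\perp:=P_M\velocity\cdot\normal^\perp-\velocitycheck_h\cdot\normal^\perp$, and similarly for $\widehat{\vect e}_u\cdot\normal$.

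The first step is the error equations. The exact solution satisfies \eqref{eq:MixedFormulation:Weakform} with the traces replaced by the exact traces, and the stabilization identities \eqref{eq:MixedFormulation:Weakform:d}--\eqref{eq:MixedFormulation:Weakform:e} then hold with zero right-hand side. Subtracting the discrete system and using orthogonality of the $L^2$-projections, the volume terms against $\curl\chi$, $\grad\varphi$, $\rot\zb$ and $\divergence\zb$ disappear, because these derivatives lie in the polynomial spaces of degree $k-1$. What remains are boundary consistency terms:
\begin{itemize}
\item in the first two equations, $\pThprod{(\velocity-\boldsymbol{\Pi}_V\velocity)\cdot\normal^\perp}{\chi}$-type terms coming from the stabilization;
\item in the third equation, $\pThprod{\sigma-\Pi_W\sigma}{\zb\cdot\normal^\perp}$ and $\pThprod{\phi-\Pi_W\phi}{\zb\cdot\normal}$.
\end{itemize}

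The second step is an energy identity. I take $\chi=e_\sigma$, $\varphi=e_\phi$ and $\zb=\vect e_u$, add the three equations, and use the global equations \eqref{eq:MixedFormulation:Weakform:f}--\eqref{eq:MixedFormulation:Weakform:g}, which hold because the exact traces are single-valued and satisfy the Dirichlet boundary condition. The skew terms cancel after integration by parts and leave
\[
\Thnorm{e_\sigma}^2+\Thnorm{e_\phi}^2+\alpha\pThnorm{e_\sigma-\widecheck{e}_\sigma}^2+\tau\pThnorm{e_\phi-\widehat{e}_\phi}^2
\]
on the left. The right-hand side is a sum of terms of the form $\pThprod{\delta}{\text{trace error}}$, where $\delta$ is a projection error of $\velocity$, $\sigma$ or $\phi$ on $\partial K$. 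Cauchy--Schwarz, Young's inequality and the trace estimates $\|\delta\|_{\partial K}\le Ch^{m-1/2}|\cdot|_{m}$ then give \eqref{eq:Theorem:A_priori_error_estimate:a}, losing half an order. This is the source of the $k+\frac12$ rate.

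The main obstacle is the term $\pThprod{(\velocity-\boldsymbol{\Pi}_V\velocity)\cdot\normal}{e_\phi-\widehat e_\phi}$ and its tangential analogue. These terms must be controlled by the stabilization norms, which are weighted by $\tau$ and $\alpha$ of order one. If the identity does not close directly, I would first try rewriting the trace-error terms through \eqref{eq:MixedFormulation:Weakform:d}--\eqref{eq:MixedFormulation:Weakform:e}, so that every boundary contribution pairs a projection error with $e_\phi-\widehat e_\phi$ or $e_\sigma-\widecheck e_\sigma$.

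For \eqref{eq:Theorem:A_priori_error_estimate:a2} I would use the triangle inequality together with the stabilization control already obtained. The remaining pieces are handled by the inverse trace inequality $\pThnorm{e_\sigma}\le Ch^{-1/2}\Thnorm{e_\sigma}$ and the trace bound $\pThnorm{\sigma-\Pi_W\sigma}\le Ch^{t-1/2}\Ht{\sigma}$. Together these give the order $h^{s-1}$, $h^{t-1}$, $h^{r-1}$ stated in \eqref{eq:Theorem:A_priori_error_estimate:a2}.

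For \eqref{eq:Theorem:A_priori_error_estimate:b} I would use duality with \eqref{eq:dual-problem} and $\Theta=\vect e_u$:
\begin{enumerate}
\item Test the dual equations against the error variables and integrate by parts elementwise.
\item Insert $\Pi_W\eta$, $\Pi_W\Phi$ and $\boldsymbol{\Pi}_V\vect\Psi$ and use the error equations to obtain
\[
\Thnorm{\vect e_u}^2=\text{(terms pairing primal errors with dual projection errors)}.
\]
\item Bound each pairing by a primal error of order $k+\frac12$ times a dual projection error of order $h^{1/2}$, using $\|\eta-\Pi_W\eta\|_{\partial\mathcal T_h}\le Ch^{1/2}\|\eta\|_{H^1}$ and the $\vect H^2$ bound on $\vect\Psi$.
\item Apply \eqref{eq:dual-problem:estimate} to get $\Thnorm{\vect e_u}\le C(h^s\Hs{\velocity}+h^t\Ht{\sigma}+h^r\Hr{\phi})$.
\end{enumerate}
The estimate \eqref{eq:Theorem:A_priori_error_estimate:b} then follows from the triangle inequality. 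The delicate part here is arranging the dual-side terms so that each carries the extra $h^{1/2}$, in particular the $\vect\Psi$ boundary terms.
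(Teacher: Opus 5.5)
Your proposal is correct and follows essentially the same route as the paper. The paper also uses $L^2$-projection error equations and an energy identity from testing with $(e_\sigma,e_\phi,\boldsymbol{e}_u)$, where single-valuedness of the exact traces and the global flux equations make every boundary term pair a projection error with $e_\sigma-\widecheck{e}_\sigma$ or $e_\phi-\widehat{e}_\phi$. It then applies Cauchy--Schwarz and Young for \eqref{eq:Theorem:A_priori_error_estimate:a}, the inverse trace inequality for \eqref{eq:Theorem:A_priori_error_estimate:a2}, and duality with $\Theta=\boldsymbol{e}_u$ for \eqref{eq:Theorem:A_priori_error_estimate:b}. The only bookkeeping difference is that the paper uses the edgewise projection $P_M$ in the third error equation, so no consistency residual remains there. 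The $\sigma,\phi$ projection errors then enter only through the stabilization, as $\alpha(P_M\sigma-\Pi_W\sigma)$ and $\tau(P_M\phi-\Pi_W\phi)$, and the term you flag as the main obstacle becomes a direct Cauchy--Schwarz and Young estimate.
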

We provide a proof of this theorem in Section~\ref{section:Proofs_WellPosedness_A_priori_error_estimates}, considering Dirichlet boundary conditions. Once the estimates stated above are established, the unisolvency of the HDG method follows as a direct consequence. Indeed, since the system~\eqref{eq:MixedFormulation:Weakform} is a square linear system, setting $\fb = 0$ implies that the exact solution vanishes. Then, by the estimates \eqref{eq:Theorem:A_priori_error_estimate}, the HDG solution must also vanish.

Theorem~\ref{Teo:A_priori_error_estimates} further shows that the approximation of the vector field $\velocity_h$ has optimal convergence of order $k+1$, while the auxiliary variables $\sigma_h$ and $\phi_h$ converge at a rate of $k+1/2$, which is suboptimal by $1/2$. This loss in accuracy is due to the use of the standard $L^2$-projection in the analysis, which, although applicable to general quasi-uniform meshes, is reduced by $1/2$ due to trace term estimates. As a result, the convergence rates of the auxiliary variables $\sigma_h$ and $\phi_h$ are reduced.

In \cite{23_Arnold}, the authors consider a mixed formulation involving the rotational variable $\sigma = \rot \, \velocity$ under Dirichlet boundary conditions. They prove that the use of conforming finite element spaces, specifically Raviart--Thomas elements of degree $k$ for the vector field and continuous Lagrange elements of the same degree for the auxiliary variable, does not preserve the inclusion of solution spaces under differential operators in the De Rham complex when Dirichlet boundary conditions are imposed. Consequently, the approximation $\sigma_h$ has suboptimal convergence of order $k - 1/2$, while the vector field $\velocity_h$ converges optimally with order $k + 1$. Although the formulations are different, our error analysis provides improved convergence rates for the auxiliary variables compared to those achieved with this conforming finite element approach.
\subsection{Main Results: Hybridizations of HDG methods}\label{subsection:Main Results:Hybridization_of_HDG_methods}
This section shows that the three hybridizations are well-defined, that is, treating $\fb$ and the trace unknowns as given data for local problems ensures that these problems have a unique solution. We also provide a characterization of the HDG solutions,   which illustrates the structure of the methods and how static condensation can be applied. Proofs of the results in this section are provided in Section~\ref{sec:Proofsofthemainresults}.

\begin{theorem}(Well-posedness  of the local problem) \label{Teo:Localproblem}  
Let $\fb$ and the boundary conditions $\velocity_h \cdot \normal^\perp=\velocitycheck_h \cdot \normal^\perp$ and $\velocity\cdot \normal = \velocityhat_h \cdot \normal$ on $\partial K$. Then, the local problem \eqref{eq:MixedFormulation:Weakform:a}--\eqref{eq:MixedFormulation:Weakform:c} admits a unique solution.
\end{theorem}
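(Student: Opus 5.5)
Since the local problem on a fixed element $K$ is a square, finite-dimensional linear system in $(\sigma_h,\phi_h,\velocity_h)\in \mathcal{P}_k(K)\times\mathcal{P}_k(K)\times\boldsymbol{\mathcal{P}}_k(K)$, it suffices to prove uniqueness. We therefore set $\fb=0$ and take the trace data $\velocitycheck_h\cdot\normal^\perp=0$ and $\velocityhat_h\cdot\normal=0$ on $\partial K$, and aim to show that $\sigma_h=\phi_h=0$ and $\velocity_h=0$. The numerical fluxes $\sigmacheck_h$ and $\phihat_h$ are eliminated through \eqref{eq:MixedFormulation:Weakform:d}--\eqref{eq:MixedFormulation:Weakform:e}:
\[
\sigmacheck_h=\sigma_h+\alpha^{-1}(\velocity_h-\velocitycheck_h)\cdot\normal^\perp,
\qquad
\phihat_h=\phi_h+\tau^{-1}(\velocity_h-\velocityhat_h)\cdot\normal .
\]

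The first step is an energy identity. We test \eqref{eq:MixedFormulation:Weakform:a} with $\chi=\sigma_h$, \eqref{eq:MixedFormulation:Weakform:b} with $\varphi=\phi_h$, and \eqref{eq:MixedFormulation:Weakform:c} with $\zb=\velocity_h$. In the third equation we integrate by parts the volume terms $(\sigma_h,\rot\velocity_h)_K$ and $-(\phi_h,\divergence\velocity_h)_K$, using $(\sigma,\rot\zb)_K=(\curl\sigma,\zb)_K-\langle\sigma,\zb\cdot\normal^\perp\rangle_{\partial K}$, with the sign convention fixed by the definitions of $\curl$, $\rot$ and $(\cdot)^\perp$, and the analogous formula for $\divergence$. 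Adding the three equations then cancels the volume coupling terms. The surviving boundary terms combine into
\[
\langle (\velocitycheck_h-\velocity_h)\cdot\normal^\perp,\sigma_h-\sigmacheck_h\rangle_{\partial K}
\quad\text{and}\quad
\langle (\velocityhat_h-\velocity_h)\cdot\normal,\phi_h-\phihat_h\rangle_{\partial K},
\]
together with terms such as $\langle\sigmacheck_h,\velocitycheck_h\cdot\normal^\perp\rangle$ that vanish because the data are zero. Substituting the stabilization relations gives
\[
\|\sigma_h\|_K^2+\|\phi_h\|_K^2+\alpha\|\sigma_h-\sigmacheck_h\|_{\partial K}^2+\tau\|\phi_h-\phihat_h\|_{\partial K}^2=0 .
\]
The hard part of this step is getting the signs and orientation of the $\normal^\perp$ terms right, so that the boundary contributions appear with positive coefficients $\alpha$ and $\tau$ rather than cancelling or coming with the wrong sign. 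I would verify this with the explicit formula $(a,b)^\perp=(b,-a)$.

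From the identity we get $\sigma_h=\phi_h=0$, and since $\alpha,\tau>0$ also $\sigmacheck_h=\phihat_h=0$ on $\partial K$. The stabilization relations with zero data then give $\velocity_h\cdot\normal^\perp=0$ and $\velocity_h\cdot\normal=0$, so $\velocity_h=0$ on $\partial K$. Equations \eqref{eq:MixedFormulation:Weakform:a}--\eqref{eq:MixedFormulation:Weakform:b} now reduce to $(\velocity_h,\curl\chi)_K=0$ and $(\velocity_h,\grad\varphi)_K=0$ for all $\chi,\varphi\in\mathcal{P}_k(K)$. Integrating back by parts and using the vanishing trace, this says $\rot\velocity_h$ and $\divergence\velocity_h$ are orthogonal to $\mathcal{P}_k(K)$. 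Both lie in $\mathcal{P}_{k-1}(K)\subset\mathcal{P}_k(K)$, so they vanish.

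The remaining obstacle is to conclude that $\velocity_h=0$. A polynomial field that is both curl-free and divergence-free is harmonic, and a harmonic polynomial vanishing on $\partial K$ is zero by uniqueness for the Dirichlet problem on $K$. Alternatively, $\velocity_h=\grad p$ with $p$ harmonic and $\partial_n p=0$, so $p$ is constant and $\velocity_h=0$.
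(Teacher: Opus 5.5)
Your proposal is correct and follows essentially the same route as the paper. Both reduce to uniqueness with zero data and derive the energy identity; your terms $\alpha\|\sigma_h-\sigmacheck_h\|_{\partial K}^2$ and $\tau\|\phi_h-\phihat_h\|_{\partial K}^2$ are exactly the paper's $\alpha^{-1}\|\velocity_h\cdot\normal^\perp\|_{\partial K}^2$ and $\tau^{-1}\|\velocity_h\cdot\normal\|_{\partial K}^2$ once the data vanish. You then obtain $\velocity_h=0$ on $\partial K$ and $\rot\velocity_h=\divergence\velocity_h=0$, and conclude $\velocity_h=0$ with a self-contained harmonicity argument, whereas the paper simply cites a uniqueness result from the literature for that last step.
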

 For the proof of this theorem, see Section \ref{section:Proofs_Hybridization}.
 
Next, we characterize the HDG solutions by splitting them based on trace unknowns and $\fb$. Additionally, we derive the energy functional associated with the corresponding global problem.  For clarity of presentation, we focus on the Hybridization of Type III case in what follows. The results for the other cases (Hybridization of Type I and Hybridization of Type II) are provided in the Appendix~\ref{sec:Appendix}.
\begin{theorem} (Characterization of HDG solution) \label{Teo:CharacterizationHDGsolution} 
The approximate solution defined in \eqref{eq:MixedFormulation:Weakform} using the Hybridization of Type III is characterized by 
\begin{subequations} \label{eq:Characterization-of-error:theorem3firstproblem}
\begin{alignat*}{4}
(\sigma_h,\phi_h,\velocity_h)=(\Sigma,\Phi,\vect{U})=(\Sigmafourth,\Phifourth,\Ufourth)+(\Sigmafourthf,\Phifourthf,\Ufourthf),  
\end{alignat*}
where, on the element $K\in \Th,$ for any $\etab^t\in \Lb^2(\partial K),$ $\etab^n\in \Lb^2(\partial K),$ the function \\ $(\Sigmafourtheta,\Phifourtheta,\Ufourtheta)$ in $\PK\times \PK \times \PKb$ is the solution of the  first local problem
\begin{alignat}{4} \label{eq:Characterization-of-error:theorem3:a}
\Kprod{\Sigmafourtheta}{\chi}-\Kprod{\Ufourtheta}{\curl \, \chi}+ \pKprod{\etab^t\cdot \normal^\perp}{\chi
} & =0,   \quad \forall \chi \in \PK,\\  \label{eq:Characterization-of-error:theorem3:b}
\Kprod{\Phifourtheta}{\varphi}-\Kprod{\Ufourtheta}{\grad\varphi}+\pKprod{\etab^n \cdot \normal}{\varphi
} & =0, \quad \forall \varphi \in \PK, \\ 
\Kprod{\Sigmafourtheta}{\rot \, \zb}+\pKprod{\Sigmacheckfourths }{\zb\cdot \normal^\perp
}   - 
\Kprod{\Phifourtheta}{\divergence \zb}
+\pKprod{\Phihatfourths}{\zb \cdot \normal}
& = 0, \quad \forall \zb \in \PKb,\label{eq:Characterization-of-error:theorem3:c}
\end{alignat}
where 
\begin{alignat}{4}  \label{eq:Characterization-of-error:theorem3:d}
\Sigmacheckfourths&=\Sigmafourtheta+\alpha^{-1} (\Ufourtheta  -\etab^t)\cdot \normal^\perp, & \text{ on }\partial K,\\ \label{eq:Characterization-of-error:theorem3:e}
\Phihatfourths&=\Phifourtheta+\tau^{-1} (\Ufourtheta  -\etab^n)\cdot \normal, & \text{ on }\partial K.
\end{alignat}
\end{subequations}
\begin{subequations} \label{eq:Characterization-of-error:theorem3secondproblem}
We also have that any $\fb\in \Lb^2(K),$ then $(\Sigmafourthf,\Phifourthf,\Ufourthf)$ in $\PK\times \PK \times \PKb$ is the solution of the second local problem  
\begin{alignat}{4} \label{eq:Characterization-of-error:theorem3:f}
\Kprod{\Sigmafourthf}{\chi}-\Kprod{\Ufourthf}{\curl \, \chi}  & =0, & \forall \chi \in \PK,\\  \label{eq:Characterization-of-error:theorem3:g}
\Kprod{\Phifourthf}{\varphi}-\Kprod{\Ufourthf}{\grad\varphi} & =0, & \forall \varphi \in \PK ,& 
\\ 
\Kprod{\Sigmafourthf}{\rot \, \zb}+ \pKprod{\widecheck{\Sigma}_\fb }{\zb \cdot  \normal^\perp} - 
\Kprod{\Phifourthf}{\divergence \zb}
+\pKprod{\widehat{\Phi}_\fb}{\zb \cdot \normal} & = \Kprod{\fb}{ \zb}, & \ \forall \zb \in \PKb,\label{eq:Characterization-of-error:theorem3:h}
\end{alignat}
where
\begin{alignat}{4}  \label{eq:Characterization-of-error:theorem3:i}
\widecheck{\Sigma}_{\fb}&=\Sigmasecondf+\alpha^{-1} \Usecondf \cdot \normal^\perp, & \text{ on }\partial K,\\ \label{eq:Characterization-of-error:theorem3:j}
\widehat{\Phi}_{\fb}&=\Phifourthf+\tau^{-1} \Ufourthf \cdot \normal, & \text{ on }\partial K.
\end{alignat}
\end{subequations} 
\begin{subequations}  \label{eq:Characterization-of-error:theorem3energy}
Here, we have  $(\velocitycheck_h ,\velocityhat_h)\in \Nhperp \times \Nh$ is the solution of
\begin{alignat}{4}  \label{eq:Characterization-of-error:theorem3:k}
a_h((\velocitycheck_h ,\velocityhat_h),(\mub^t,\mub^n))=l_h(\mub^t,\mub^n),
\end{alignat}
where
\begin{alignat}{4} \nonumber   
a_h((\etab^t ,\etab^n),(\mub^t,\mub^n))&= \Thprod{\Sigmafourtheta}{\Sigmafourthmu}+ \Thprod{\Phifourtheta}{\Phifourthmu} \\ \nonumber
&+\pThprod{\alpha^{-1} (\Ufourtheta   - \etab^t)\cdot \normal^\perp}{(\Ufourthmu   - \mub^t)\cdot \normal^\perp}  \\
&+\pThprod{\tau^{-1} (\Ufourtheta   - \etab^n)\cdot \normal}{(\Ufourths   - \mub^n)\cdot \normal}, 
\label{eq:Characterization-of-error:theorem3:n}\\
l_h(\mub^t,\mub^n)&= \Thprod{\fb}{\Ufourthmu},\label{eq:Characterization-of-error:theorem3:o}
\end{alignat}
and $(\velocitycheck_h,\velocityhat_h)$ minimizes the total energy functional
\end{subequations}
\begin{alignat}{4} \label{eq:Energy}
\mathcal{J}_h(\mub^t,\mub^n):=\frac{1}{2}a_h((\mub^t,\mub^n),(\mub^t,\mub^n))-l_h(\mub^t,\mub^n)
\end{alignat}
over the functions $(\mub^t,\mub^n)   \in \Nhperp \times \Nh$.
\end{theorem}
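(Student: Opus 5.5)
Superposition plus an energy identity is the route. By Theorem~\ref{Teo:Localproblem}, both local problems \eqref{eq:Characterization-of-error:theorem3:a}--\eqref{eq:Characterization-of-error:theorem3:e} and \eqref{eq:Characterization-of-error:theorem3:f}--\eqref{eq:Characterization-of-error:theorem3:j} are uniquely solvable on each $K$, and the map $(\etab^t,\etab^n,\fb)\mapsto(\Sigma,\Phi,\vect U)$ is linear. Take $(\etab^t,\etab^n)=(\velocitycheck_h,\velocityhat_h)$ and use the transmission relations \eqref{eq:MixedFormulation:Weakform:d}--\eqref{eq:MixedFormulation:Weakform:e}, which give $\sigmacheck_h=\sigma_h+\alpha^{-1}(\velocity_h-\velocitycheck_h)\cdot\normal^\perp$ and the analogous formula for $\phihat_h$. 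Then \eqref{eq:MixedFormulation:Weakform:a}--\eqref{eq:MixedFormulation:Weakform:c}, restricted to $K$, is exactly the sum of the two local problems. Uniqueness of the local solution yields $(\sigma_h,\phi_h,\velocity_h)=(\Sigmafourth,\Phifourth,\Ufourth)+(\Sigmafourthf,\Phifourthf,\Ufourthf)$. The numerical traces split the same way.

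Next I would substitute this splitting into the global equations \eqref{eq:MixedFormulation:Weakform:f}--\eqref{eq:MixedFormulation:Weakform:g}. This shows $(\velocitycheck_h,\velocityhat_h)$ is determined by
\[
-\pThprod{\widecheck\Sigma_{\velocitycheck_h}+\widecheck\Sigma_\fb}{\mub^t\cdot\normal^\perp}-\pThprod{\widehat\Phi_{\velocityhat_h}+\widehat\Phi_\fb}{\mub^n\cdot\normal}=0
\]
for all test pairs. The task is to rewrite both sides as $a_h$ and $l_h$.

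The key identity is the following. For traces $(\etab^t,\etab^n)$ and $(\mub^t,\mub^n)$, test the $\eta$-problem \eqref{eq:Characterization-of-error:theorem3:c} with $\zb=\Ufourthmu$. Then use \eqref{eq:Characterization-of-error:theorem3:a}--\eqref{eq:Characterization-of-error:theorem3:b} for the $\mu$-problem with $\chi=\Sigmafourtheta$ and $\varphi=\Phifourtheta$, after integrating by parts so that $\Kprod{\Sigmafourtheta}{\rot\,\Ufourthmu}$ becomes $\Kprod{\Ufourthmu}{\curl\,\Sigmafourtheta}$ plus a boundary term. This turns the volume terms into $\Kprod{\Sigmafourtheta}{\Sigmafourthmu}+\Kprod{\Phifourtheta}{\Phifourthmu}$ plus boundary terms involving $\mub$ and $\Ufourthmu$. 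Inserting the definitions of $\Sigmacheckfourths$ and $\Phihatfourths$ should give
\[
-\pKprod{\Sigmacheckfourths}{\mub^t\cdot\normal^\perp}-\pKprod{\Phihatfourths}{\mub^n\cdot\normal}
\]
equal to the $K$-contribution of $a_h$ in \eqref{eq:Characterization-of-error:theorem3:n}. The step uses $\pKprod{\Sigmafourtheta}{(\Ufourthmu-\mub^t)\cdot\normal^\perp}$ cancellations and the stabilization terms becoming symmetric.

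Similarly, testing the $\fb$-problem with $\zb=\Ufourthmu$ and the $\mu$-problem with $\Sigmafourthf,\Phifourthf$ gives
\[
-\pKprod{\widecheck\Sigma_\fb}{\mub^t\cdot\normal^\perp}-\pKprod{\widehat\Phi_\fb}{\mub^n\cdot\normal}=-\Kprod{\fb}{\Ufourthmu}.
\]
This identity is the main obstacle, since signs and orientations of $\normal^\perp$ in the integration by parts must line up exactly. I would verify it carefully with $\rot$ and $\curl$ adjoint up to $\pKprod{\cdot}{\cdot\,\normal^\perp}$.

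Summing over $K$ gives $a_h((\velocitycheck_h,\velocityhat_h),(\mub^t,\mub^n))=l_h(\mub^t,\mub^n)$. The form $a_h$ is symmetric by construction and positive semidefinite. If $a_h(\mub,\mub)=0$, then $\Sigmafourthmu=\Phifourthmu=0$ and $\Ufourthmu-\mub$ has zero tangential and normal components on $\partial K$. Plugging this back into the local problem forces $\Ufourthmu$ to be constant in the kernel. Combined with single-valuedness and the zero boundary values in $\Nhperp\times\Nh$, this gives $\mub=0$, so $a_h$ is positive definite. The minimization characterization via $\mathcal J_h$ then follows from symmetry and positivity, since \eqref{eq:Characterization-of-error:theorem3:k} is its Euler--Lagrange equation.
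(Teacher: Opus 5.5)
Your superposition step and the trace identity obtained by testing the $\etab$-problem with $\zb=\Ufourthmu$ and the $\mub$-problem with $\chi=\Sigmafourtheta$, $\varphi=\Phifourtheta$ coincide with the paper's proof and are correct. The gap is in the $\fb$-identity, and the problem there is not the orientation of $\normal^\perp$. Take the testing you propose: the $\fb$-problem \eqref{eq:Characterization-of-error:theorem3:h} with $\zb=\Ufourthmu$, and the $\mub$-problem with $\chi=\Sigmafourthf$, $\varphi=\Phifourthf$. After integrating by parts and inserting \eqref{eq:Characterization-of-error:theorem3:i}--\eqref{eq:Characterization-of-error:theorem3:j}, it gives
\begin{multline*}
B_K:=\Kprod{\Sigmafourthmu}{\Sigmafourthf}+\Kprod{\Phifourthmu}{\Phifourthf}+\pKprod{\alpha^{-1}\Ufourthf\cdot\normal^\perp}{(\Ufourthmu-\mub^t)\cdot\normal^\perp}\\
+\pKprod{\tau^{-1}\Ufourthf\cdot\normal}{(\Ufourthmu-\mub^n)\cdot\normal}\\
=\Kprod{\fb}{\Ufourthmu}-\pKprod{\widecheck{\Sigma}_{\fb}}{\mub^t\cdot\normal^\perp}-\pKprod{\widehat{\Phi}_{\fb}}{\mub^n\cdot\normal}.
\end{multline*}
So your identity holds if and only if $B_K=0$, the cross-energy term between the $\fb$-solution and the $\mub$-solution, and that testing says nothing about $B_K$. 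The missing idea is a second, reversed testing. Take $\chi=\Sigmafourthmu$, $\varphi=\Phifourthmu$ in \eqref{eq:Characterization-of-error:theorem3:f}--\eqref{eq:Characterization-of-error:theorem3:g}, which carry no boundary data, and integrate by parts. Then take $\zb=\Ufourthf$ in the $\mub$-version of \eqref{eq:Characterization-of-error:theorem3:c}. Inserting the definitions of $\Sigmacheckfourthmu$ and $\Phihatfourthmu$ gives exactly $B_K=0$; this is the paper's \eqref{eq:Characterization-of-error:theorem3:v}. Without this reciprocity step the $\fb$-component contributes an extra term, and the argument does not close to $a_h=l_h$.

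A second, smaller problem is your proof that $a_h$ is definite, which the paper merely asserts (the minimization property itself needs only semidefiniteness). From $a_h(\mub,\mub)=0$ you get $\Sigmafourthmu=\Phifourthmu=0$, $\Ufourthmu\cdot\normal^\perp=\mub^t\cdot\normal^\perp$ and $\Ufourthmu\cdot\normal=\mub^n\cdot\normal$ on each $\partial K$. The first two local equations then give only $\rot\,\Ufourthmu=\divergence \Ufourthmu=0$ in $K$, not that $\Ufourthmu$ is constant. For $k\ge 1$ the field $(x,-y)$ is rot- and div-free, and with data equal to its traces the local solver returns $(0,0,(x,-y))$ with zero local energy. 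The kernel must therefore be eliminated globally. Since $\jump{\mub^t\cdot\normal^\perp}=\jump{\mub^n\cdot\normal}=0$ on interior edges and $\mub^t\cdot\normal^\perp=\mub^n\cdot\normal=0$ on $\Gamma$, the tangential and normal components of $\Ufourthmu$ are continuous across edges and vanish on $\Gamma$. Hence $\Ufourthmu\in\vect{H}^1_0(\Omega)$ and $\|\nabla\Ufourthmu\|^2_{L^2(\Omega)}=\|\rot\,\Ufourthmu\|^2_{L^2(\Omega)}+\|\divergence \Ufourthmu\|^2_{L^2(\Omega)}=0$, so $\Ufourthmu=0$. Together with $\mub^t\cdot\normal=0$ and $\mub^n\cdot\normal^\perp=0$, this gives $\mub^t=\mub^n=0$.
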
 
A proof of the above theorem can be found in Section \ref{section:Proofs_Hybridization}. For the other cases, Hybridization of Type I–Hybridization of Type II, the results are provided in the Appendix~\ref{sec:Appendix}, the proofs are analogous and are therefore omitted.

\section{Proofs of the main results} \label{sec:Proofsofthemainresults}
In this section, we present the proofs of the results stated in Section~\ref{section:TheHDGMethod}, the \textit{a priori} error estimates in the $L^2$--norm (Theorem~\ref{Teo:A_priori_error_estimates}), the well-posedness of the local problem (Theorem \ref{Teo:Localproblem}), and the characterizations of the HDG solutions (Theorem \ref{Teo:CharacterizationHDGsolution}).
\subsection{Proof of Theorem \ref{Teo:A_priori_error_estimates} (\textit{A priori} error estimates) }\label{section:Proofs_WellPosedness_A_priori_error_estimates}
We begin by introducing the standard $L^2$--projection estimates and proceed as follows. In \textit{Step 1}, we derive the equations satisfied by the projection of the error. In \textit{Steps 2} and \textit{3}, we establish an energy argument, which is utilized in \textit{Step 4} to estimate the $L^2$--error for the approximations of rotational and divergence variables. Similarly, in \textit{Step 5}, we estimate the error for the numerical traces. Finally, in \textit{Step 6}, we apply a duality argument to obtain error estimates for the vector field.

First, we define the following projection errors
\begin{alignat*}{4}
\errorsigma := \PiW \sigma - \sigma_h, \ 
\errorphi := \PiW \phi - \phi_h, \
\erroru := \PiV \velocity - \velocity_h, \
\errorsigmacheck := \PM \sigma - \sigmacheck_h, \ 
\errorphihat := \PM \phi - \phihat_h,
\end{alignat*}
where $\PiW$, $\PiV$, and $\PM$ are the standard $L^2$--projections on the finite-dimensional spaces $\Wh$, $\Vb_h$, and $M_h$, respectively. In this analysis, we use the following standard approximation properties
\begin{subequations}\label{eq:Cotas_proyeccion_L2}
\begin{alignat}{4}\label{eq:Cotas_proyeccion_L2:a}
\Thnorm{\sigma-\PiW \sigma}&\leq C h^{t} \Ht{\sigma},& \quad 0\leq t \leq k+1,\\ \label{eq:Cotas_proyeccion_L2:b}
\pThnorm{\sigma-\PiW \sigma}&\leq C h^{t-\frac{1}{2}} \Ht{\sigma},& \quad 1\leq t \leq k+1,\\ \label{eq:Cotas_proyeccion_L2:c}
\pThnorm{\sigma-\PM \sigma}&\leq C h^{t-\frac{1}{2}} \Ht{\sigma},& \quad 1\leq t \leq k+2,\\                   \label{eq:Cotas_proyeccion_L2:d}
\Thnorm{\phi-\PiW \phi}&\leq C h^{r} \Hr{\phi},& \quad 0\leq r \leq k+1, \\ \label{eq:Cotas_proyeccion_L2:f}
\pThnorm{\phi-\PiW \phi}&\leq C h^{r-\frac{1}{2}} \Hr{\phi},& \quad 1\leq r \leq k+1, \\
\label{eq:Cotas_proyeccion_L2:e}
\pThnorm{\phi-\PM \phi}&\leq C h^{r-\frac{1}{2}} \Hr{\phi},& \quad 1\leq r \leq k+2, \\ \label{eq:Cotas_proyeccion_L2:g}
\Thnorm{\velocity-\PiV \velocity}&\leq C h^s \Hs{\velocity}, & \quad 0\leq s \leq k+1,\\ \label{eq:Cotas_proyeccion_L2:h}
\pThnorm{\velocity-\PiV \velocity}&\leq C h^{s-\frac{1}{2}} \Hs{\velocity},& \quad 1\leq s \leq k+1.
\end{alignat}
\end{subequations}
\subsubsection*{Step 1: The error equations.} We begin with some auxiliary results that characterize the error of the HDG method.
\begin{lemma}\label{lemma:ecuaciones_del_error} Let $(\sigma,\phi,\velocity)$ and $(\sigma_h,\phi_h,\velocity_h,\velocitycheck_h,\velocityhat_h)$ solutions of the systems of equations \eqref{eq:MixedProblem} and \eqref{eq:MixedFormulation:Weakform}, respectively. Then, we have
\begin{subequations}\label{eq:lemma1:equacion_error}
\begin{alignat}{4} \label{eq:lemma1:equacion_error:a}
\Thprod{\errorsigma}{\chi}-\Thprod{\erroru}{\curl \, \chi} + \pThprod{\velocity\cdot \normal^\perp-\velocitycheck_h\cdot \normal^\perp}{\chi}& =0,    \\  \label{eq:lemma1:equacion_error:b}
\Thprod{\errorphi}{\varphi}-\Thprod{\erroru}{\grad\varphi}+\pThprod{\velocity\cdot \normal-\velocityhat_h\cdot \normal}{\varphi
} & =0, 
\\  \label{eq:lemma1:equacion_error:c}
\Thprod{\errorsigma}{\rot \, \zb} +
\pThprod{\errorsigmacheck}{\zb \cdot \normal^\perp} - 
\Thprod{\errorphi}{\divergence \zb} +
\pThprod{\errorphihat}{ \zb \cdot \normal } 
& =0,\\ \label{eq:lemma1:equacion_error:d}
\pThprod{\errorsigmacheck}{\mub^t \cdot \normal^\perp} 
& =0,\\\label{eq:lemma1:equacion_error:e}
\pThprod{\errorphihat}{\mub^n \cdot \normal} & =0,
\end{alignat}
for all $(\chi,\varphi,\zb,\mub^t,\mub^n)\in \Wh\times \Wh\times\vect{V}_h\times \Nhperp\times \Nh.$
\end{subequations}
\end{lemma}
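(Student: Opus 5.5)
The plan is a standard consistency argument: show that the exact solution satisfies the discrete equations \eqref{eq:MixedFormulation:Weakform:a}--\eqref{eq:MixedFormulation:Weakform:c}, \eqref{eq:MixedFormulation:Weakform:f}, \eqref{eq:MixedFormulation:Weakform:g} when its own traces replace the numerical traces, subtract the discrete system, and then use the orthogonality of the $L^2$-projections to replace exact quantities by their projections.

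\textbf{Volume equations.} Multiply \eqref{eq:MixedProblem:a} by $\chi\in\Wh$ and integrate by parts elementwise. Using $\Kprod{\rot\,\velocity}{\chi}=\Kprod{\velocity}{\curl\,\chi}-\pKprod{\velocity\cdot\normal^\perp}{\chi}$, with the sign fixed by the paper's convention for $\perp$, we get
\[
\Thprod{\sigma}{\chi}-\Thprod{\velocity}{\curl\,\chi}+\pThprod{\velocity\cdot\normal^\perp}{\chi}=0 .
\]
Subtracting \eqref{eq:MixedFormulation:Weakform:a} gives an equation in $\sigma-\sigma_h$ and $\velocity-\velocity_h$. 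Since $\chi\in\PK$ and $\curl\,\chi\in\PKb$, we have $\Thprod{\sigma-\PiW\sigma}{\chi}=0$ and $\Thprod{\velocity-\PiV\velocity}{\curl\,\chi}=0$, so the exact quantities can be replaced by their projections. This yields \eqref{eq:lemma1:equacion_error:a}; the boundary term keeps the exact $\velocity$ because no projection of it is introduced there. Equation \eqref{eq:lemma1:equacion_error:b} follows in the same way from \eqref{eq:MixedProblem5::b}, using $\grad\varphi\in\PKb$.

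\textbf{Equation \eqref{eq:lemma1:equacion_error:c}.} Test \eqref{eq:MixedProblem:c} with $\zb\in\Vh$ and integrate by parts on each $K$:
\[
\Thprod{\sigma}{\rot\,\zb}+\pThprod{\sigma}{\zb\cdot\normal^\perp}-\Thprod{\phi}{\divergence\zb}+\pThprod{\phi}{\zb\cdot\normal}=\Thprod{\fb}{\zb}.
\]
Subtract \eqref{eq:MixedFormulation:Weakform:c}. In the volume terms, $\rot\,\zb$ and $\divergence\zb$ lie in $\PK$, so $\sigma$ and $\phi$ can be replaced by $\PiW\sigma$ and $\PiW\phi$. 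In the face terms, $\zb\cdot\normal^\perp$ and $\zb\cdot\normal$ restricted to an edge belong to $\mathcal P_k(F)$, because normals are constant on straight edges; hence $\sigma$ and $\phi$ can be replaced by $\PM\sigma$ and $\PM\phi$.

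This is where I expect the only real obstacle. $M_h$ was defined with zero boundary values, so on boundary edges the replacement only makes sense if $\PM$ is read as the edgewise $L^2$-projection onto $\mathcal P_k(F)$ on every edge, boundary edges included. For the Dirichlet case $\sigma$ and $\phi$ do not vanish on $\Gamma$. I would therefore state explicitly that $\PM$ denotes the edgewise projection onto the full trace space. Under that reading the identity holds.

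\textbf{Transmission equations.} For \eqref{eq:lemma1:equacion_error:d}, note that $\sigma\in H^1$ is single-valued on interior edges, and that $\mub^t\cdot\normal^\perp$ is single-valued with opposite orientations from the two sides, and zero on $\Gamma$. Hence $\pThprod{\sigma}{\mub^t\cdot\normal^\perp}=0$. Since $\mub^t\cdot\normal^\perp\in\mathcal P_k(F)$, we may again replace $\sigma$ by $\PM\sigma$. Subtracting \eqref{eq:MixedFormulation:Weakform:f} then gives \eqref{eq:lemma1:equacion_error:d}. Equation \eqref{eq:lemma1:equacion_error:e} follows identically, using $\phi$, $\mub^n$ and \eqref{eq:MixedFormulation:Weakform:g}.
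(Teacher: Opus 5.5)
Your proposal is correct and follows the same route as the paper. Write the weak equations satisfied by the exact solution with its own traces, use the $L^2$-orthogonality of $\PiW$, $\PiV$ and $\PM$ against $\rot\,\zb$, $\divergence\zb$, $\curl\,\chi$, $\grad\varphi$ and the edge polynomials $\zb\cdot\normal^\perp|_F$, $\zb\cdot\normal|_F$ to insert the projections, and subtract the discrete system. Your caveat that $\PM$ must be read as the edgewise projection onto $\mathcal P_k(F)$ on every edge, boundary edges included, is a point the paper leaves implicit, and the same reading is needed for the trace estimate \eqref{eq:Cotas_proyeccion_L2:c} to hold in the Dirichlet case.
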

\begin{proof}[Proof:]
The exact solutions $\sigma,$ $\phi$ and $\velocity$ verify
\begin{subequations}
\begin{alignat*}{4} 
\Thprod{\sigma}{\chi}-\Thprod{\velocity}{\curl \, \chi} + \pThprod{\velocity \cdot \normal^\perp}{\chi}& =0,    \\  
\Thprod{\phi}{\varphi}-\Thprod{\velocity}{\grad\varphi}+\pThprod{\velocity \cdot \normal}{\varphi
} & =0, 
\\  
\Thprod{\sigma}{\rot \, \zb} +
\pThprod{\sigma}{\zb \cdot \normal^\perp} - 
\Thprod{\phi}{\divergence \zb} +
\pThprod{\phi}{ \zb \cdot \normal } 
& =\Thprod{\fb}{\zb},\\ 
\pThprod{\sigma}{\mub^t \cdot \normal^\perp} 
& =0,\\
\pThprod{\phi}{\mub^n \cdot \normal} & =0,
\end{alignat*}
\end{subequations}
for all $(\chi,\varphi,\zb,\mub^t,\mub^n)\in \Wh\times \Wh\times\vect{V}_h\times \Nhperp \times \Nh.$    
Using the definition of $L^2-$projections, we obtain
\begin{subequations}\label{prof:lemma1:equacion_error_proj}
\begin{alignat}{4} \label{proof:lemma1:equacion_error_proj:a}
\Thprod{\PiW\sigma}{\chi}-\Thprod{\PiV\velocity}{\curl \, \chi} + \pThprod{\velocity \cdot \normal^\perp}{\chi}& =0,    \\  \label{proof:lemma1:equacion_error_proj:b}
\Thprod{\PiW\phi}{\varphi}-\Thprod{\PiV\velocity}{\grad\varphi}+\pThprod{\velocity \cdot \normal}{\varphi
} & =0, 
\\  \label{proof:lemma1:equacion_error_proj:c}
\Thprod{\PiW\sigma}{\rot \, \zb} +
\pThprod{\PM\sigma}{\zb \cdot \normal^\perp} - 
\Thprod{\PiW\phi}{\divergence \zb} +
\pThprod{\PM\phi}{ \zb \cdot \normal } 
& =\Thprod{\fb}{\zb},\\ \label{proof:lemma1:equacion_error_proj:d}
\pThprod{P_M\sigma}{\mub^t \cdot \normal^\perp} 
& =0,\\\label{proof:lemma1:equacion_error_proj:e}
\pThprod{P_M\phi}{\mub^n \cdot \normal} & =0,
\end{alignat}
\end{subequations}
Subtracting system \eqref{eq:MixedFormulation:Weakform} from system \eqref{prof:lemma1:equacion_error_proj} yields the desired result. \\
\end{proof}

\subsubsection*{Step 2: Energy argument.} Next, we present an energy result associated with the previous error.
\begin{lemma}\label{lemma:energia_flujo} We have
\begin{alignat}{4}\nonumber
\Thnorm{\errorsigma}^2+\Thnorm{\errorphi}^2+\alpha \pThnorm{\errorsigma-\errorsigmacheck}^2+\tau \pThnorm{\errorphi-\errorphihat}^2=-\pThprod{(\velocity-\PiV \velocity)\cdot \normal^\perp}{\errorsigma-\errorsigmacheck}\\ \nonumber
-\pThprod{\alpha(\PM \sigma-\PiW \sigma)}{\errorsigma-\errorsigmacheck}-\pThprod{(\velocity-\PiV \velocity)\cdot \normal}{\errorphi-\errorphihat}
\\-\pThprod{\tau(\PM \phi-\PiW \phi)}{\errorphi-\errorphihat}:=-T_1-T_2-T_3-T_4.\label{eq:lemma:energia_flujo}
\end{alignat}
\end{lemma}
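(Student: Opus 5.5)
The identity in Lemma~\ref{lemma:energia_flujo} should follow from testing the error equations of Lemma~\ref{lemma:ecuaciones_del_error} with the errors themselves. The only delicate point is how the trace equations and the stabilization definitions \eqref{eq:MixedFormulation:Weakform:d}--\eqref{eq:MixedFormulation:Weakform:e} are used to convert the boundary terms. I will work under Dirichlet boundary conditions and the Hybridization of Type III, as fixed in Section~\ref{ss:hdgmethodsandhybridizations}.

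\emph{First step.} Take $\chi=\errorsigma$ in \eqref{eq:lemma1:equacion_error:a}, $\varphi=\errorphi$ in \eqref{eq:lemma1:equacion_error:b} and $\zb=\erroru$ in \eqref{eq:lemma1:equacion_error:c}. In the third equation, integrate by parts elementwise, using the conventions implicit in the exact weak equations:
\[
\Thprod{\errorsigma}{\rot\,\erroru}=\Thprod{\curl\,\errorsigma}{\erroru}-\pThprod{\errorsigma}{\erroru\cdot\normal^\perp}
\]
and the analogous formula for $-\Thprod{\errorphi}{\divergence\erroru}$. Subtract the resulting form of \eqref{eq:lemma1:equacion_error:c} from the sum of the first two equations. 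The volume terms $\Thprod{\erroru}{\curl\,\errorsigma}$ and $\Thprod{\erroru}{\grad\errorphi}$ cancel, which leaves
\[
\Thnorm{\errorsigma}^2+\Thnorm{\errorphi}^2+\pThprod{(\velocity-\velocitycheck_h)\cdot\normal^\perp-\erroru\cdot\normal^\perp}{\errorsigma}-\pThprod{\errorsigmacheck}{\erroru\cdot\normal^\perp}+(\text{analogous normal terms})=0 .
\]

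\emph{Second step.} Bring in $\errorsigma-\errorsigmacheck$ using \eqref{eq:lemma1:equacion_error:d}. I test it with $\mub^t$ equal to the tangential trace of $\PM$-projected $\velocity$ minus $\velocitycheck_h$. This is admissible in $\Nhperp$: $\velocity$ is single-valued and vanishes on $\Gamma$, and $\velocitycheck_h\cdot\normal^\perp=0$ on $\Gamma$ by Table~\ref{tab:BCfortraces}. Since $\errorsigmacheck$ is piecewise polynomial of degree $k$ on edges, the projection can be dropped, giving
\[
\pThprod{\errorsigmacheck}{(\velocity-\velocitycheck_h)\cdot\normal^\perp}=0 .
\]
Subtracting this from the identity of the first step yields the combined term $\pThprod{(\velocity-\velocitycheck_h)\cdot\normal^\perp-\erroru\cdot\normal^\perp}{\errorsigma-\errorsigmacheck}$. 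The same argument with \eqref{eq:lemma1:equacion_error:e} and $\mub^n\in\Nh$ handles the normal components.

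\emph{Third step: insert the stabilization.} Split
\[
\velocity-\velocitycheck_h-\erroru=(\velocity-\PiV\velocity)+(\velocity_h-\velocitycheck_h).
\]
By \eqref{eq:MixedFormulation:Weakform:d} we have $(\velocity_h-\velocitycheck_h)\cdot\normal^\perp=-\alpha(\sigma_h-\sigmacheck_h)$. Writing $\sigma_h=\PiW\sigma-\errorsigma$ and $\sigmacheck_h=\PM\sigma-\errorsigmacheck$ gives
\[
-\alpha(\sigma_h-\sigmacheck_h)=\alpha(\errorsigma-\errorsigmacheck)+\alpha(\PM\sigma-\PiW\sigma).
\]
Substituting produces $\alpha\pThnorm{\errorsigma-\errorsigmacheck}^2$ on the left and the terms $T_1,T_2$ on the right. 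The same manipulation with $\tau$ and \eqref{eq:MixedFormulation:Weakform:e} produces $\tau\pThnorm{\errorphi-\errorphihat}^2$ together with $T_3$ and $T_4$.

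The main obstacle is sign and admissibility bookkeeping rather than any estimate. One must check that the chosen $\mub^t,\mub^n$ really lie in $\Nhperp$ and $\Nh$, in particular that the boundary edges are consistent with the Dirichlet conditions of Table~\ref{tab:BCfortraces}. One must also verify that the integration-by-parts sign convention for $\curl/\rot$ matches the one used in the exact weak equations in the proof of Lemma~\ref{lemma:ecuaciones_del_error}. If the signs in $T_1$--$T_4$ come out reversed, that convention is the first thing I would recheck.
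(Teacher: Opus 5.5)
Your argument is the paper's argument. You test with $(\errorsigma,\errorphi,\erroru)$, eliminate $\pThprod{(\velocity-\velocitycheck_h)\cdot\normal^\perp}{\errorsigmacheck}$ and its normal analogue via \eqref{eq:lemma1:equacion_error:d}--\eqref{eq:lemma1:equacion_error:e}, and then insert the stabilization identities. Integrating by parts in \eqref{eq:lemma1:equacion_error:c} instead of in the first two equations makes no difference. Your explicit admissible $\mub^t\in\Nhperp$ justifies the vanishing cross term more carefully than the paper's one-line remark does.

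There is one sign slip in your first step that you need to fix. The integrated-by-parts form of \eqref{eq:lemma1:equacion_error:c} must be \emph{added} to the sum of the first two equations, not subtracted. Subtracting doubles $\Thprod{\erroru}{\curl\,\errorsigma}$ instead of cancelling it. Adding produces the term $+\pThprod{\errorsigmacheck}{\erroru\cdot\normal^\perp}$, not the $-\pThprod{\errorsigmacheck}{\erroru\cdot\normal^\perp}$ you displayed.

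This sign matters for your second step. With the correct sign, subtracting $\pThprod{\errorsigmacheck}{(\velocity-\velocitycheck_h)\cdot\normal^\perp}=0$ gives exactly $\pThprod{(\velocity-\velocitycheck_h-\erroru)\cdot\normal^\perp}{\errorsigma-\errorsigmacheck}$. With your displayed sign, the $\erroru$-terms do not combine. Once this is corrected, your third step goes through unchanged and gives \eqref{eq:lemma:energia_flujo}.
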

\begin{proof}[Proof:]
Taking $\chi=\errorsigma$ and $\varphi=\errorphi$ in equations \eqref{eq:lemma1:equacion_error:a}-\eqref{eq:lemma1:equacion_error:b}, then
\begin{subequations}\label{eq:lemma2:equacion_error}
\begin{alignat*}{4} 
\Thprod{\errorsigma}{\errorsigma}-\Thprod{\erroru}{\curl \, \errorsigma} + \pThprod{\velocity\cdot \normal^\perp-\velocitycheck_h\cdot \normal^\perp}{\errorsigma}& =0,    \\  
\Thprod{\errorphi}{\errorphi}-\Thprod{\erroru}{\grad\errorphi}+\pThprod{\velocity\cdot \normal-\velocityhat_h\cdot \normal}{\errorphi} & =0,
\end{alignat*}
and  integrating by parts, we obtain
\begin{eqnarray*}
\Thprod{\rot \,\erroru}{\errorsigma} &=& \Thprod{\errorsigma}{\errorsigma} - \pThprod{\erroru\cdot \normal^\perp}{\errorsigma}+\pThprod{(\velocity -\velocitycheck_h) \cdot \normal^\perp}{\errorsigma}, \label{eq:lemma2:equacion_error:c}  \\
\Thprod{\divergence \erroru}{\errorphi} &=&- \Thprod{\errorphi}{\errorphi}+\pThprod{\erroru \cdot \normal }{ \errorphi} - \pThprod{(\velocity-\velocityhat_h)\cdot \normal}{\errorphi}. \label{eq:lemma2:equacion_error:d}
\end{eqnarray*}
Now, replacing the last two equations in equation \eqref{eq:lemma1:equacion_error:c} with $\zb=\erroru$, we get
\begin{alignat}{4}  \nonumber
\Thprod{\errorsigma}{\errorsigma} - \pThprod{\erroru\cdot \normal^\perp}{\errorsigma}+\pThprod{(\velocity -\velocitycheck_h) \cdot \normal^\perp}{\errorsigma} +
\pThprod{\errorsigmacheck}{\erroru \cdot \normal^\perp} + \Thprod{\errorphi}{\errorphi}\\ \label{eq:lemma2:equacion_error:e}
-\pThprod{\erroru \cdot \normal }{ \errorphi} + \pThprod{(\velocity-\velocityhat_h)\cdot \normal}{\errorphi}+\pThprod{\errorphihat}{ \erroru \cdot \normal } 
 =0.
\end{alignat}
On the other hand, we have
\begin{alignat}{4} \nonumber
- \pThprod{\erroru\cdot \normal^\perp}{\errorsigma}+\pThprod{(\velocity -\velocitycheck_h) \cdot \normal^\perp}{\errorsigma} +
\pThprod{\errorsigmacheck}{\erroru \cdot \normal^\perp} = \pThprod{\erroru\cdot \normal^\perp}{\errorsigmacheck-\errorsigma}\\ \nonumber
+\pThprod{(\velocity -\velocitycheck_h) \cdot \normal^\perp}{\errorsigma}\\\nonumber
= \pThprod{\erroru\cdot \normal^\perp}{\errorsigmacheck-\errorsigma}
+\pThprod{(\velocity -\velocitycheck_h) \cdot \normal^\perp}{\errorsigma-\errorsigmacheck}+ \pThprod{(\velocity -\velocitycheck_h) \cdot \normal^\perp}{\errorsigmacheck}\\
= \pThprod{\errorsigma-\errorsigmacheck}{(\velocity -\velocitycheck_h-\erroru)\cdot \normal^\perp}, \label{eq:lemma2:equacion_error:f}
\end{alignat}
where $\pThprod{(\velocity -\velocitycheck_h) \cdot \normal^\perp}{\errorsigmacheck}=0$ because $\velocity \cdot \normal^\perp $ is single-valued and equation \eqref{eq:lemma1:equacion_error:d}. Similarly, since $\velocity \cdot \normal$ is single-valued and equation  \eqref{eq:lemma1:equacion_error:e}, then
\begin{alignat}{4} \nonumber
- \pThprod{\erroru\cdot \normal}{\errorphi}+\pThprod{(\velocity -\velocityhat_h) \cdot \normal}{\errorphi} +
\pThprod{\errorphihat}{\erroru \cdot \normal} = \pThprod{\erroru\cdot \normal}{\errorphihat-\errorphi}\\  \nonumber
+\pThprod{(\velocity -\velocityhat_h) \cdot \normal}{\errorphi}\\\nonumber
= \pThprod{\erroru\cdot \normal}{\errorphihat-\errorphi}
+\pThprod{(\velocity -\velocityhat_h) \cdot \normal}{\errorphi-\errorphihat}+ \pThprod{(\velocity -\velocityhat_h) \cdot \normal}{\errorphihat}\\
= \pThprod{\errorphi-\errorphihat}{(\velocity -\velocityhat_h-\erroru)\cdot \normal} .\label{eq:lemma2:equacion_error:g}
\end{alignat}
Replacing equations \eqref{eq:lemma2:equacion_error:f} and \eqref{eq:lemma2:equacion_error:g} in the equation \eqref{eq:lemma2:equacion_error:e} yields
\begin{alignat}{4}  \nonumber
\Thprod{\errorsigma}{\errorsigma}  + \Thprod{\errorphi}{\errorphi}+
 \pThprod{\errorsigma-\errorsigmacheck}{(\velocity -\velocitycheck_h-\erroru) \cdot \normal^\perp}\\+\pThprod{\errorphi-\errorphihat}{(\velocity -\velocityhat_h-\erroru) \cdot \normal}=0. \label{eq:lemma2:equacion_error:h}
\end{alignat}
Additionally, 
\begin{alignat}{4}  \nonumber
(\velocity -\velocitycheck_h-\erroru) \cdot \normal^\perp&=\velocity\cdot \normal^\perp -\velocitycheck_h\cdot \normal^\perp-(\PiV \velocity-\velocity_h) \cdot \normal^\perp \\ \nonumber
&=\velocity\cdot \normal^\perp -\PiV \velocity \cdot \normal^\perp+(\velocity_h-\velocitycheck_h)\cdot \normal^\perp\\ \nonumber
&=\velocity\cdot \normal^\perp -\PiV \velocity \cdot \normal^\perp-\alpha(\sigma_h-\sigmacheck_h)\\
&=\velocity\cdot \normal^\perp -\PiV \velocity \cdot \normal^\perp+\alpha(\errorsigma-\errorsigmacheck)+\alpha(\PM \sigma-\PiW\sigma),
\label{eq:lemma2:equacion_error:i}
\end{alignat}
and
\begin{alignat}{4}  \nonumber
(\velocity -\velocityhat_h-\erroru) \cdot \normal&=\velocity\cdot \normal -\velocityhat_h\cdot \normal-(\PiV \velocity-\velocity_h) \cdot \normal \\ \nonumber
&=\velocity\cdot \normal -\PiV \velocity \cdot \normal+(\velocity_h-\velocityhat_h)\cdot \normal\\ \nonumber
&=\velocity\cdot \normal -\PiV \velocity \cdot \normal-\tau(\phi_h-\phihat_h)\\
&=\velocity\cdot \normal -\PiV \velocity \cdot \normal+\tau(\errorphi-\errorphihat)+\tau(\PM \phi-\PiW\phi).
\label{eq:lemma2:equacion_error:j}
\end{alignat}
Replacing equations \eqref{eq:lemma2:equacion_error:i} and \eqref{eq:lemma2:equacion_error:j} in \eqref{eq:lemma2:equacion_error:h}, we obtain the equation \eqref{eq:lemma:energia_flujo} stated in the lemma.

\end{subequations}
\end{proof}
\subsubsection*{Step 3: Energy estimate.} We next establish the following bound for the error terms.
\begin{lemma}\label{lemma3:cota_de_energia} The errors satisfy the bound
  \begin{alignat}{4}\nonumber
\Thnorm{\errorsigma}^2+\Thnorm{\errorphi}^2+\alpha \pThnorm{\errorsigma-\errorsigmacheck}^2+\tau \pThnorm{\errorphi-\errorphihat}^2\\ \nonumber \leq Ch^{s-\frac{1}{2}}\Hs{\velocity}(\pThnorm{\errorsigma-\errorsigmacheck}+\pThnorm{\errorphi-\errorphihat})\\
+C(h^{t-\frac{1}{2}}\Ht{\sigma}\pThnorm{\errorsigma-\errorsigmacheck}+h^{r-\frac{1}{2}}\Hr{\phi}\pThnorm{\errorphi-\errorphihat} ) \label{eq:lemma3:cota_de_energia}
\end{alignat}  
where $C>0$ and $1\leq s,t, r \leq k+1.$
\end{lemma}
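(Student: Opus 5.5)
The plan is to start from the energy identity of Lemma~\ref{lemma:energia_flujo} and bound each of the four terms $T_1,\dots,T_4$ on its right-hand side. Each one is a product of a known projection error on $\partial\mathcal T_h$ with one of the unknown skeleton quantities $\errorsigma-\errorsigmacheck$ or $\errorphi-\errorphihat$. We apply the Cauchy--Schwarz inequality on $\partial\mathcal T_h$ to each term, so that the unknown factor appears only through $\pThnorm{\errorsigma-\errorsigmacheck}$ or $\pThnorm{\errorphi-\errorphihat}$. The remaining factor is then estimated with the $L^2$-projection approximation properties \eqref{eq:Cotas_proyeccion_L2}.

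For $T_1$ and $T_3$ we use $|\normal^\perp|=|\normal|=1$ and obtain
\[
|T_1|+|T_3|\le \pThnorm{\velocity-\PiV\velocity}\bigl(\pThnorm{\errorsigma-\errorsigmacheck}+\pThnorm{\errorphi-\errorphihat}\bigr).
\]
By \eqref{eq:Cotas_proyeccion_L2:h} this is at most $Ch^{s-\frac12}\Hs{\velocity}$ times the bracket, valid for $1\le s\le k+1$. This gives the first line of \eqref{eq:lemma3:cota_de_energia}.

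For $T_2$, since $\alpha$ is a fixed constant, we get $|T_2|\le \alpha\,\pThnorm{\PM\sigma-\PiW\sigma}\,\pThnorm{\errorsigma-\errorsigmacheck}$. We insert $\sigma$ and use the triangle inequality:
\[
\pThnorm{\PM\sigma-\PiW\sigma}\le \pThnorm{\sigma-\PM\sigma}+\pThnorm{\sigma-\PiW\sigma}.
\]
Both terms are bounded by $Ch^{t-\frac12}\Ht{\sigma}$ through \eqref{eq:Cotas_proyeccion_L2:c} and \eqref{eq:Cotas_proyeccion_L2:b}, for $1\le t\le k+1$, which lies within both admissible ranges. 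The treatment of $T_4$ is identical, with $\tau$, $\phi$, $r$ and \eqref{eq:Cotas_proyeccion_L2:e}, \eqref{eq:Cotas_proyeccion_L2:f}. The constants $\alpha$ and $\tau$ are then absorbed into $C$.

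One point needs care: the meaning of $\PM\sigma$ on $\partial\mathcal T_h$. Under the Dirichlet setting $M_h$ vanishes on boundary edges, so we interpret $\PM$ as the edgewise $L^2$-projection onto $\mathcal P_k(F)$, as the estimate \eqref{eq:Cotas_proyeccion_L2:c} implicitly does. The trace bound itself is standard. On each edge $F\subset\partial K$ we have $\|\sigma-\PM\sigma\|_F\le\|\sigma-\PiW\sigma|_K\|_F$, because $\PiW\sigma|_K$ restricted to $F$ lies in $\mathcal P_k(F)$. Combined with the scaled trace inequality, this yields the $h^{t-1/2}$ rate. Apart from this point, the argument is routine: summing the four bounds gives \eqref{eq:lemma3:cota_de_energia}.
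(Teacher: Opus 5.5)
Your proposal is correct and follows the paper's proof. Like the paper, you apply Cauchy--Schwarz to each of $T_1,\dots,T_4$ from Lemma~\ref{lemma:energia_flujo}, use the trace bound \eqref{eq:Cotas_proyeccion_L2:h} for the velocity terms, and insert $\sigma$ (resp.\ $\phi$) with the triangle inequality for $T_2$ and $T_4$. Your reading of $\PM$ as the edgewise $L^2$-projection onto $\mathcal{P}_k(F)$ on every edge, boundary edges included, is a sound clarification of the paper's loose notation; it is also what the error equations of Lemma~\ref{lemma:ecuaciones_del_error} implicitly need.
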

\begin{proof}[Proof:]
\begin{subequations} \label{eq_proof_lemma3:cota_de_energia}
In the equation \eqref{eq:lemma:energia_flujo} of Lemma \ref{lemma:energia_flujo}, we find bounds for terms $T_1,$ $T_2,$ $T_3,$ and $T_4$. By the Cauchy-Schwarz inequality and the estimate \eqref{eq:Cotas_proyeccion_L2:h}, we have
 \begin{alignat}{4} 
 \nonumber 
T_1&=\pThprod{(\velocity-\PiV \velocity)\cdot \normal^\perp}{\errorsigma-\errorsigmacheck}\leq \pThnorm{\velocity-\PiV \velocity}\pThnorm{\errorsigma-\errorsigmacheck} \label{eq_proof_lemma3:cota_de_energia:c}\\
 &\leq C  h^{s-\frac{1}{2}}\Hs{\velocity}\pThnorm{\errorsigma-\errorsigmacheck},
 \end{alignat}
with $1\leq s \leq k+1.$ Since estimates \eqref{eq:Cotas_proyeccion_L2:b}-\eqref{eq:Cotas_proyeccion_L2:c}, we follow     
 \begin{alignat}{4} 
 \nonumber
T_2&=\pThprod{\alpha(\PM \sigma-\PiW \sigma)}{\errorsigma-\errorsigmacheck}\leq \alpha \pThnorm{\PM \sigma-\PiW \sigma}  \pThnorm{\errorsigma-\errorsigmacheck}\\ \nonumber & \leq \alpha (\pThnorm{\PM \sigma- \sigma}+ \pThnorm{\sigma-\PiW\sigma})  \pThnorm{\errorsigma-\errorsigmacheck}\\
 &\leq C \alpha h^{t-\frac{1}{2}}\Ht{\sigma}\pThnorm{\errorsigma-\errorsigmacheck}, \label{eq_proof_lemma3:cota_de_energia:d}
\end{alignat}    
for $1\leq t \leq k+1.$ Now, by the estimate \eqref{eq:Cotas_proyeccion_L2:h}, then
\begin{alignat}{4}\nonumber
T_3&=\pThprod{(\velocity-\PiV \velocity)\cdot \normal}{\errorphi-\errorphihat}\leq\pThnorm{\velocity-\PiV \velocity} \pThnorm{\errorphi-\errorphihat}\\ \label{eq_proof_lemma3:cota_de_energia:a}
&\leq C h^{s-\frac{1}{2}}\Hs{\velocity}\pThnorm{\errorphi-\errorphihat},
\end{alignat} 
where $1\leq s \leq k+1.$ Finally, using bounds \eqref{eq:Cotas_proyeccion_L2:d}-\eqref{eq:Cotas_proyeccion_L2:e}, we obtain
\begin{alignat}{4}\nonumber
T_4&=\pThprod{\tau(\PM \phi-\PiW \phi)}{\errorphi-\errorphihat}\leq \tau \pThnorm{\PM \phi-\PiW \phi}  \pThnorm{\errorphi-\errorphihat}\\  & \leq \tau (\pThnorm{\PM \phi- \phi}+ \pThnorm{\phi-\PiW\phi})  \pThnorm{\errorphi-\errorphihat}\leq C \tau h^{r-\frac{1}{2}}\Hr{\phi}\pThnorm{\errorphi-\errorphihat}, \label{eq_proof_lemma3:cota_de_energia:b}
 \end{alignat}
with $1\leq r \leq k+1.$ 

The estimate \eqref{eq:lemma3:cota_de_energia}  follows from inserting \eqref{eq_proof_lemma3:cota_de_energia} into expression \eqref{eq:lemma:energia_flujo}.
\end{subequations}
\end{proof}

\subsubsection*{Step 4: Estimates of rotational and divergence.} We now prove the first part of the Theorem \ref{Teo:A_priori_error_estimates}.

\begin{proof} 
By triangle inequality, we have
\begin{alignat}{4}\nonumber
    \Thnorm{\sigma-\sigma_h}+    \Thnorm{\phi-\phi_h}&\leq    \Thnorm{\sigma-\PiW \sigma}+\Thnorm{\errorsigma}+    \Thnorm{\phi-\PiW\phi}+\Thnorm{\errorphi}\\ 
&:=\widebar{T}_1+\widebar{T}_2+\widebar{T}_3+\widebar{T}_4. \label{eq:theorem_estimates_fluxes}
    \end{alignat}
 The terms $\widebar{T}_1$ and $\widebar{T}_3$ are bounded by estimates \eqref{eq:Cotas_proyeccion_L2:a} and \eqref{eq:Cotas_proyeccion_L2:d}, respectively. We now estimate $\widebar{T}_2$ and $\widebar{T}_4.$ Inequality \eqref{eq:lemma3:cota_de_energia} of Lemma \ref{lemma3:cota_de_energia} implies
 \begin{alignat*}{4}\nonumber
\Thnorm{\errorsigma}^2+\Thnorm{\errorphi}^2+\alpha \pThnorm{\errorsigma-\errorsigmacheck}^2 +\tau \pThnorm{\errorphi-\errorphihat}^2  &\leq C(h^{s-\frac{1}{2}}\Hs{\velocity}+h^{t-\frac{1}{2}}\Ht{\sigma})\pThnorm{\errorsigma-\errorsigmacheck}\\ & \quad + C(h^{s-\frac{1}{2}}\Hs{\velocity} +h^{r-\frac{1}{2}}\Hr{\phi})  \pThnorm{\errorphi-\errorphihat}, 
\end{alignat*} 
and by Cauchy's inequality, for all $\epsilon_1, \epsilon_2 > 0$, we have
 \begin{alignat*}{4}\nonumber
\Thnorm{\errorsigma}^2+\Thnorm{\errorphi}^2+\alpha \pThnorm{\errorsigma-\errorsigmacheck}^2 +\tau \pThnorm{\errorphi-\errorphihat}^2 \\ 
\nonumber \leq \frac{C^2}{4 \epsilon_1}(h^{s-\frac{1}{2}}\Hs{\velocity}+h^{t-\frac{1}{2}}\Ht{\sigma})^2+\epsilon_1 \pThnorm{\errorsigma-\errorsigmacheck}^2\\
+ \frac{C^2}{4 \epsilon_2}(h^{s-\frac{1}{2}}\Hs{\velocity} +h^{r-\frac{1}{2}}\Hr{\phi})^2 + \epsilon_2\pThnorm{\errorphi-\errorphihat}^2. %
\end{alignat*} 
Now, if we take $\epsilon_1 < \alpha$ and $\epsilon_2 < \tau$, then
 \begin{alignat}{4}\nonumber
\Thnorm{\errorsigma}^2+\Thnorm{\errorphi}^2+M_1 \pThnorm{\errorsigma-\errorsigmacheck}^2 +M_2 \pThnorm{\errorphi-\errorphihat}^2 \\ 
\nonumber \leq C(h^{s-\frac{1}{2}}\Hs{\velocity}+h^{t-\frac{1}{2}}\Ht{\sigma}+h^{r-\frac{1}{2}}\Hr{\phi})^2,  %
\end{alignat} 
where $M_i=\alpha-\epsilon_i> 0$, for $i=1,2$, which implies that we can estimate each term of the left-hand side of the last inequality of the following way
\begin{subequations}
 \begin{alignat}{4}\label{eq:Proof_of_Theorem_estimate:subestimate1}
\widebar{T}_2  \leq C(h^{s-\frac{1}{2}}\Hs{\velocity}+h^{t-\frac{1}{2}}\Ht{\sigma} +h^{r-\frac{1}{2}}\Hr{\phi}),  \\ \label{eq:Proof_of_Theorem_estimate:subestimate2}
\widebar{T}_4 \leq C(h^{s-\frac{1}{2}}\Hs{\velocity}+h^{t-\frac{1}{2}}\Ht{\sigma} +h^{r-\frac{1}{2}}\Hr{\phi}),\\ \label{eq:Proof_of_Theorem_estimate:subestimate3}
 \pThnorm{\errorsigma-\errorsigmacheck}\leq C (h^{s-\frac{1}{2}}\Hs{\velocity}+h^{t-\frac{1}{2}}\Ht{\sigma} +h^{r-\frac{1}{2}}\Hr{\phi}),\\ \label{eq:Proof_of_Theorem_estimate:subestimate4}
 \pThnorm{\errorphi-\errorphihat}  \leq C(h^{s-\frac{1}{2}}\Hs{\velocity}+h^{t-\frac{1}{2}}\Ht{\sigma} +h^{r-\frac{1}{2}}\Hr{\phi}). %
\end{alignat}  
By inserting \eqref{eq:Cotas_proyeccion_L2:a},\eqref{eq:Cotas_proyeccion_L2:d} and \eqref{eq:Proof_of_Theorem_estimate:subestimate1}-\eqref{eq:Proof_of_Theorem_estimate:subestimate2} into inequality \eqref{eq:theorem_estimates_fluxes}, we conclude the estimate \eqref{eq:Theorem:A_priori_error_estimate:a} stated in the theorem.
\end{subequations}\\

\subsubsection*{Step 5: Estimates of numerical fluxes of rotational and divergence.} Now, by the triangle inequality, we have
\begin{alignat}{4} \label{eq:triangleinequalitysigmacheck}
    \pThnorm{\sigma- \sigmacheck_h} \leq \pThnorm{\sigma- \PM \sigma} + \pThnorm{\errorsigmacheck},
\end{alignat}
where the first term can be estimated using \eqref{eq:Cotas_proyeccion_L2:c}. For the second term, by applying the triangle inequality, a scaling argument, and the estimates \eqref{eq:Proof_of_Theorem_estimate:subestimate1} and \eqref{eq:Proof_of_Theorem_estimate:subestimate3}, we obtain
\begin{alignat}{4} \nonumber
 \pThnorm{\errorsigmacheck} &\leq \pThnorm{\errorsigma-\errorsigmacheck} + \pThnorm{\errorsigma} \leq \pThnorm{\errorsigma-\errorsigmacheck} + Ch^{-\frac{1}{2}} \Thnorm{\errorsigma} \\ \label{eq:Proof_of_Theorem_estimate:subestimate_psi_check}
 &\leq C\left(h^{s-1} \Hs{\velocity} + h^{t-1} \Ht{\sigma}  + h^{r-1} \Hr{\phi}\right),
\end{alignat}
for $1\leq s,t,r \leq k+1$. From \eqref{eq:Cotas_proyeccion_L2:c} and \eqref{eq:Proof_of_Theorem_estimate:subestimate_psi_check}, we conclude that
\begin{alignat*}{4} 
 \pThnorm{\sigma- \sigmacheck_h}  &\leq C\left(h^{s-1} \Hs{\velocity}+h^{t-1} \Ht{\sigma} +h^{r-1} \Hr{\phi}\right),
\end{alignat*}
and using a similar argument along with estimates \eqref{eq:Cotas_proyeccion_L2:e}, \eqref{eq:Proof_of_Theorem_estimate:subestimate2}, and \eqref{eq:Proof_of_Theorem_estimate:subestimate4}, we obtain
\begin{alignat*}{4} 
 \pThnorm{\phi- \phihat_h}  &\leq C\left(h^{s-1} \Hs{\velocity}+h^{t-1} \Ht{\sigma} +h^{r-1} \Hr{\phi}\right).
\end{alignat*}
Combining the last two inequalities, we arrive at  \eqref{eq:Theorem:A_priori_error_estimate:a2}.
\subsubsection*{Step 6: Estimates of the vector field.}
To prove the last part of the theorem, we assume the regularity condition \eqref{eq:dual-problem:estimate} for the dual problem \eqref{eq:dual-problem}. Applying the triangle inequality, we get  
\begin{alignat}{4}
    \Thnorm{\velocity-\velocity_h} &\leq \Thnorm{\velocity-\PiV \velocity} + \Thnorm{\erroru}. \label{eq:theorem_estimates_triangle_inequality_u}
\end{alignat}  
The first term can be bounded using \eqref{eq:Cotas_proyeccion_L2:g}, so we now turn our attention to estimating the second term. Testing equations \eqref{eq:dual-problem:a}-\eqref{eq:dual-problem:c} with $\errorsigma$, $\errorphi$, $\erroru$ and taking $\Theta = \erroru$, we have  
\begin{alignat}{4}\nonumber
-\Thprod{\errorsigma}{\eta} +\Thprod{\errorsigma}{\rot \,\vect{\Psi}}&=0, \\  \nonumber
-\Thprod{\errorphi}{\Phi} -\Thprod{\errorsigma}{ \divergence\,\vect{\Psi}}&=0, \\ \nonumber \Thprod{\erroru}{
\curl\,\eta}+\Thprod{\erroru}{\grad\,\Phi} &=\Thprod{\erroru}{\erroru}. 
\end{alignat}
Summing the equations above, we obtain
\begin{alignat}{4}\nonumber
    \Thnorm{\erroru}^2&=\Thprod{\erroru}{\curl \ \eta}+\Thprod{\erroru}{\grad   \Phi}-\Thprod{\errorsigma}{\eta}+\Thprod{\errorsigma}{\rot  \vect{\Psi}}\\
    & \quad \  -\Thprod{\errorphi}{\Phi} -\Thprod{\errorsigma}{ \divergence\,\vect{\Psi}}.
\end{alignat}
Adding the projection operators, we have
\begin{alignat}{4}\nonumber
\Thnorm{\erroru}^2    &=\Thprod{\erroru}{\curl (\PiW \eta) }-\Thprod{\erroru}{\curl(\PiW \eta-\eta)}+\Thprod{\erroru}{\grad(\PiW \Phi)}\\
    & \quad -\Thprod{\erroru}{\grad(\PiW \Phi-\Phi)}- \Thprod{\errorsigma}{\eta-\PiW \eta}- \Thprod{\errorsigma}{\PiW \eta} \nonumber\\ \nonumber
        &\quad +\Thprod{\errorsigma}{\rot (\PiV \vect{\Psi}) }-\Thprod{\errorsigma}{\rot(\PiV \vect{\Psi}-\vect{\Psi})}-\Thprod{\errorphi}{ \Phi-\PiW \Phi}\\       
    & \quad -\Thprod{\errorphi}{\PiW \Phi}- \Thprod{\errorphi}{\divergence(\vect{\Psi}-\PiV \vect{\Psi})}- \Thprod{\errorphi}{\divergence (\PiV \vect{\Psi})}.
\end{alignat}
The terms $\Thprod{\errorsigma}{\eta - \PiW \eta} = \Thprod{\errorphi}{ \Phi - \PiW \Phi} = 0$ because the definition of the $L^2$--projection. Now, integrating by parts and using the orthogonality of the $L^2$--projections, 
\begin{alignat}{4}\nonumber
    \Thnorm{\erroru}^2&=\Thprod{\erroru}{\curl (\PiW \eta)}-\pThprod{\erroru \cdot \normal^\perp}{\PiW \eta-\eta}+\Thprod{\erroru}{\grad(\PiW \Phi)}\\
    & \quad -\pThprod{\erroru \cdot \normal}{\PiW \Phi-\Phi}- \Thprod{\errorsigma}{\PiW \eta}+ \Thprod{\errorsigma}{\rot(\PiV  \vect{\Psi})} \nonumber\\ \nonumber
        &\quad +\pThprod{\errorsigma}{ (\PiV  \vect{\Psi}- \vect{\Psi})\cdot \normal^\perp }-\Thprod{\errorphi}{\PiW \Phi}-\pThprod{\errorphi}{ ( \vect{\Psi}-\PiV  \vect{\Psi})\cdot \normal}\\    & \quad -\Thprod{\errorphi}{\divergence (\PiV  \vect{\Psi})}. \label{eq:priori_errror_estimates_u}
\end{alignat}
Now, taking $\chi = \PiW \eta$, $\varphi = \PiW \Phi$, and $\varphi = \PiV \vect{\Psi}$ in equations \eqref{eq:lemma1:equacion_error:a}-\eqref{eq:lemma1:equacion_error:c}, we arrive to
\begin{alignat*}{4} 
\Thprod{\erroru}{\curl \, (\PiW \eta)}-\Thprod{\errorsigma}{\PiW \eta}&= \pThprod{\velocity\cdot \normal^\perp-\velocitycheck_h\cdot \normal^\perp}{\PiW \eta},    \\  
\Thprod{\erroru}{\grad (\PiW \Phi)}-\Thprod{\errorphi}{\PiW \Phi}&=\pThprod{\velocity\cdot \normal-\velocityhat_h\cdot \normal}{\PiW \Phi
}, 
\\  
\Thprod{\errorsigma}{\rot \, (\PiV  \vect{\Psi})}  - 
\Thprod{\errorphi}{\divergence (\PiV  \vect{\Psi})} 
& =-\pThprod{\errorsigmacheck}{\PiV  \vect{\Psi}\cdot \normal^\perp}-\pThprod{\errorphihat}{ \PiV  \vect{\Psi} \cdot \normal },
\end{alignat*}
and substituting above equations in \eqref{eq:priori_errror_estimates_u}, we obtain
\begin{alignat}{4}\nonumber
      \Thnorm{\erroru}^2&=\pThprod{\velocity \cdot \normal^\perp-\velocitycheck_h \cdot \normal^\perp}{\PiW \eta}-\pThprod{\erroru \cdot \normal^\perp}{\PiW \eta-\eta}\\ \nonumber
    &\quad + \pThprod{\velocity \cdot \normal-\velocityhat_h \cdot \normal}{\PiW \Phi}-\pThprod{\erroru \cdot \normal}{\PiW \Phi-\Phi}\\ \nonumber
    &\quad - \pThprod{\errorsigmacheck}{\PiV  \vect{\Psi} \cdot \normal^\perp}-\pThprod{\errorphihat}{\PiV  \vect{\Psi} \cdot \normal}\\ \nonumber
        &\quad + \pThprod{\errorsigma}{(\PiV  \vect{\Psi}-  \vect{\Psi}) \cdot \normal^\perp}-\pThprod{\errorphi}{( \vect{\Psi}-\PiV   \vect{\Psi})\cdot \normal}.
\end{alignat}
Since $\eta$, $\Phi$, $\vect{\Psi} \cdot \normal$, and $\vect{\Psi} \cdot \normal^\perp$ are continuous, they are therefore single-valued.  Thus, from equations \eqref{eq:lemma1:equacion_error:d}-\eqref{eq:lemma1:equacion_error:e}, we obtain
\begin{alignat}{4} \nonumber
  \Thnorm{\erroru}^2 &=\pThprod{\velocity \cdot \normal^\perp-\velocitycheck_h \cdot \normal^\perp-\erroru \cdot \normal^\perp}{\PiW \eta-\eta}+\pThprod{\velocity \cdot \normal-\velocityhat_h \cdot \normal^\perp-\erroru \cdot \normal}{\PiW \Phi-\Phi}\\  \nonumber
  & \quad + \pThprod{\errorsigma-\errorsigmacheck}{(\PiV  \vect{\Psi}- \vect{\Psi})\cdot \normal^\perp}+\pThprod{\errorphi-\errorphihat}{(\PiV  \vect{\Psi}- \vect{\Psi})\cdot \normal}\\
  &:=\widetilde{T}_1+\widetilde{T}_2+\widetilde{T}_3+\widetilde{T}_4.\label{eq:estimates_for_u}
\end{alignat}
In the following, we establish bounds for the terms $\widetilde{T}_1$, $\widetilde{T}_2$, $\widetilde{T}_3$, and $\widetilde{T}_4$. We explicitly provide the bounds for $\widetilde{T}_1$ and $\widetilde{T}_3$, and the bounds for $\widetilde{T}_2$ and $\widetilde{T}_4$ follow by a similar argument.\\
Rewriting the term $\widetilde{T}_1$, we get
\begin{alignat}{4}\nonumber
\widetilde{T}_1&=\pThprod{\velocity \cdot \normal^\perp-\velocitycheck_h \cdot \normal^\perp-\erroru \cdot \normal^\perp}{\PiW \eta-\eta}\\ \nonumber
&=\pThprod{(\velocity-\PiV \velocity) \cdot \normal^\perp}{\PiW \eta-\eta}+\pThprod{ \velocity_h \cdot \normal^\perp - \velocitycheck_h \cdot \normal^\perp }{\PiW \eta-\eta},
\end{alignat}
and using the definitions of numerical fluxes \eqref{eq:MixedFormulation:Weakform:d} and \eqref{eq:MixedFormulation:Weakform:e}, then
\begin{alignat*}{4}\nonumber
\widetilde{T}_1&=\pThprod{(\velocity-\PiV \velocity) \cdot \normal^\perp}{\PiW \eta-\eta}+\pThprod{\alpha(\errorsigma-\errorsigmacheck)}{\PiW \eta-\eta}\\
& \quad +\pThprod{\alpha(\PM \sigma-\PiW \sigma)}{\PiW \eta-\eta},
\end{alignat*}
and by the Cauchy-Schwarz inequality,
\begin{alignat*}{4}\nonumber
\widetilde{T}_1& \leq \pThnorm{\velocity-\PiV \velocity}\pThnorm{\PiW \eta-\eta}+\alpha\pThnorm{\errorsigma-\errorsigmacheck}\pThnorm{\PiW \eta-\eta}\\
& \quad +\alpha\pThnorm{\PM \sigma-\PiW \sigma}\pThnorm{\PiW \eta-\eta}.
\end{alignat*}
Using a standard scaling argument 
\begin{alignat*}{4}\nonumber
\widetilde{T}_1& \leq C\pThnorm{\velocity-\PiV \velocity}h^{-\frac{1}{2}}\Thnorm{\PiW \eta-\eta}+C\pThnorm{\errorsigma-\errorsigmacheck}h^{-\frac{1}{2}}\Thnorm{\PiW \eta-\eta}\\
& \quad +C\pThnorm{\PM \sigma-\PiW \sigma}h^{-\frac{1}{2}}\Thnorm{\PiW \eta-\eta},
\end{alignat*}
and with the estimates for the projection  operators \eqref{eq:Cotas_proyeccion_L2:a}, we obtain
\begin{alignat*}{4}\nonumber
\widetilde{T}_1& \leq C\pThnorm{\velocity-\PiV \velocity}h^{\frac{1}{2}} \| \eta \|_{H^1(\Omega)}+ C\pThnorm{\errorsigma-\errorsigmacheck}h^{\frac{1}{2}}\| \eta \|_{H^1(\Omega)}\\
& \quad + C\pThnorm{\PM \sigma-\PiW \sigma}h^{\frac{1}{2}} \| \eta \|_{H^1(\Omega)}.
\end{alignat*}
Since the bound \eqref{eq:dual-problem:estimate} hold  with $\Theta = \erroru$, we infer
\begin{alignat*}{4}\nonumber
\widetilde{T}_1& \leq  C \pThnorm{\velocity-\PiV \velocity}h^{\frac{1}{2}}\Thnorm{\erroru}+C\pThnorm{\errorsigma-\errorsigmacheck}h^{\frac{1}{2}}\Thnorm{\erroru}\\
& \quad +C\pThnorm{\PM \sigma-\PiW \sigma}h^{\frac{1}{2}}\Thnorm{\erroru}\\
& = C\pThnorm{\velocity-\PiV \velocity}h^{\frac{1}{2}}\Thnorm{\erroru}+C\alpha\pThnorm{\errorsigma-\errorsigmacheck}h^{\frac{1}{2}}\Thnorm{\erroru}\\
& \quad +C(\pThnorm{ \sigma-\PM \sigma }+\pThnorm{ \sigma-\PiW \sigma })h^{\frac{1}{2}}\Thnorm{\erroru}.
\end{alignat*}
Applying the projections bounds \eqref{eq:Cotas_proyeccion_L2:b}, \eqref{eq:Cotas_proyeccion_L2:c} and  \eqref{eq:Cotas_proyeccion_L2:h} along with the estimate  \eqref{eq:Proof_of_Theorem_estimate:subestimate3} gives
\begin{alignat}{4}\nonumber
\widetilde{T}_1& \leq C(h^{s}\Hs{\velocity}+h^{t}\Ht{\sigma} +h^{r}\Hr{\phi})\Thnorm{\erroru}.
\end{alignat}

Furthermore, to estimate the term $\widetilde{T}_3$, we apply the Cauchy–Schwarz inequality, a  scaling argument and the projection bound \eqref{eq:Cotas_proyeccion_L2:g} to obtain
\begin{alignat}{4}\nonumber
\widetilde{T}_3 &= \pThprod{\errorsigma - \errorsigmacheck}{(\PiV  \vect{\Psi} - \vect{\Psi}) \cdot \normal^\perp} \leq  \pThnorm{\errorsigma - \errorsigmacheck} \pThnorm{\PiV  \vect{\Psi} - \vect{\Psi}} \\ \nonumber &\leq  C h^{-\frac{1}{2}}\pThnorm{\errorsigma - \errorsigmacheck} \Thnorm{\PiV  \vect{\Psi} - \vect{\Psi}}\leq h^{\frac{1}{2}} \pThnorm{\errorsigma - \errorsigmacheck} \| \vect{\Psi} \|_{\vect{H}^2(\Omega)},
\end{alignat}
and combining the bounds \eqref{eq:dual-problem:estimate} and \eqref{eq:Proof_of_Theorem_estimate:subestimate3}, we get
\begin{alignat}{4}\nonumber
\widetilde{T}_3 &\leq C \left( h^{s} \Hs{\velocity} + h^{t} \Ht{\sigma} + h^{r} \Hr{\phi} \right) \Thnorm{\erroru}.
\end{alignat}
Finally, we derive the bounds for $\widetilde{T}_1$ and $\widetilde{T}_4$ in a similar manner.  
Thus, by combining these four estimates in \eqref{eq:estimates_for_u}, dividing by $\Thnorm{\erroru} > 0$, and substituting in \eqref{eq:theorem_estimates_triangle_inequality_u}, we obtain the second result in \eqref{eq:Theorem:A_priori_error_estimate:b}, as stated in the theorem.
\end{proof}
\subsection{Hybridization of HDG methods}\label{section:Proofs_Hybridization}
Here, we present the proofs of the theorems stated in the Subsection \ref{subsection:Main Results:Hybridization_of_HDG_methods}, which focus on establishing the existence and uniqueness of the local problems. This enables us to derive a system only in terms of trace unknowns and apply the static condensation technique to reduce the computational cost. Furthermore, we characterize the solution of the HDG method by decomposing it into components that depend on the trace unknowns and $\fb$. Finally, we express the trace unknowns as the minimizer of an energy functional, where the resulting global system corresponds to a symmetric positive-definite matrix.

Here, we present the proof for the method Hybridization of Type III. The other cases are analogous and therefore will be omitted.

\subsubsection*{Proof of Theorem \ref{Teo:Localproblem} (Well-posedness  of the local problem)} 
Since the local problem defines a finite-dimensional square system. That is,  $(\sigma_h, \phi_h, \velocity_h )\in \PK \times \PK \times \PKb,$ such that verify
\begin{subequations} \label{eq:HDG-vector-laplacian-second-formulation:LocalSolver}
\begin{alignat}{4} \label{eq:HDG-vector-laplacian-second-formulation:LocalSolver:a}
\Kprod{\sigma_h}{\chi}-\Kprod{\velocity_h}{\curl \, \chi} & =-\pKprod{ \velocitycheck_h\cdot \normal^\perp}{\chi
},  & \\  \label{eq:HDG-vector-laplacian-second-formulation:LocalSolver:b}
\Kprod{\phi_h}{\varphi}-\Kprod{\velocity_h}{\grad\varphi} & =-\pKprod{\velocityhat_h\cdot \normal}{\varphi
} ,& 
\\ \nonumber
\Kprod{\sigma_h}{\rot \, \zb}+\pKprod{\sigma_h + \alpha^{-1} \velocity_h \cdot \normal^\perp }{\zb\cdot \normal^\perp
}  &- 
\Kprod{\phi_h }{\divergence \zb}+\pThprod{\phi_h+\tau^{-1}\velocity_h \cdot \normal}{\zb \cdot \normal} \\
 = \Kprod{\fb}{\zb}  \label{eq:HDG-vector-laplacian-second-formulation:LocalSolver:c}
+\pKprod{\alpha^{-1}\velocitycheck_h\cdot \normal^\perp}{\zb \cdot \normal^{\perp}} &- 
\pKprod{\alpha^{-1}\velocityhat_h}{ \zb \cdot \normal } , 
\end{alignat}
\end{subequations}
we only have to show that, when we set the
data $\fb=0$ and $\velocitycheck \cdot \normal^\perp=0$ and $\velocityhat\cdot \normal=0$, the only solution is zero. Thus, testing with $\chi=\sigma_h$, $\varphi=\phi_h$ in equations \eqref{eq:HDG-vector-laplacian-second-formulation:LocalSolver:a} and \eqref{eq:HDG-vector-laplacian-second-formulation:LocalSolver:b}, we obtain
\begin{eqnarray*}
\Kprod{\sigma_h}{\sigma_h}-\Kprod{\velocity_h}{\curl \, \sigma_h}  & =0, 
\\ 
\Kprod{\phi_h}{\phi_h}- \Kprod{\velocity_h}{\grad\phi_h} & =0.
\end{eqnarray*}
and integrating by parts, 
\begin{eqnarray*}
\Kprod{\sigma_h}{\sigma_h}-\Kprod{\rot \,\velocity_h}{\sigma_h}-\pKprod{\velocity_h \cdot \normal^{\perp}}{\sigma_h} =0, & \\
\Kprod{\phi_h}{\phi_h}+\Kprod{\divergence \velocity_h}{\phi_h}-\pKprod{\velocity_h\cdot \normal}{\phi_h}  =0,&
\end{eqnarray*}
which, rearranging the terms, is equivalent to
\begin{eqnarray*}
\Kprod{\rot \,\velocity_h}{\sigma_h}  &=& \Kprod{\sigma_h}{\sigma_h} -\pKprod{\velocity_h \cdot \normal^{\perp}}{\sigma_h} ,  \\
\Kprod{\divergence \velocity_h}{\phi_h} &=&- \Kprod{\phi_h}{\phi_h}+ \pKprod{\velocity_h \cdot \normal}{\phi_h}.
\end{eqnarray*}
If we take $\zb = \velocity_h$ in equation \eqref{eq:HDG-vector-laplacian-second-formulation:LocalSolver:c} and use the last two equations, we get
\begin{eqnarray*}
\Kprod{\sigma_h}{\sigma_h} + 
\pKprod{\alpha^{-1} \velocity_h\cdot \normal^\perp}{ \velocity_h\cdot \normal^\perp }+ 
\Kprod{\phi_h}{\phi_h} +
\pKprod{\tau^{-1}\velocity_h \cdot \normal  }{\velocity_h\cdot \normal} = 0.
\end{eqnarray*}
Since $\alpha > 0$ and $\tau > 0$, we obtain $\phi_h = 0$ and $\sigma_h = 0$ in all of $K$. We also have $\velocity_h \cdot \normal^\perp = 0$ and $\velocity_h \cdot \normal = 0$ on $\partial K$ hold, which imply  $\velocity_h= 0$ on $\partial K$. Given that $\rot \, \PKb \subseteq \PK$ and $\divergence \PKb \subseteq \PK$, from \eqref{eq:HDG-vector-laplacian-second-formulation:LocalSolver:a}-\eqref{eq:HDG-vector-laplacian-second-formulation:LocalSolver:b}, we obtain  
\begin{eqnarray*}
\rot \, \velocity_h &=& 0, \quad \text{ in } K,  \\
\divergence \velocity_h &=& 0, \quad \text{ in } K,  \\
\velocity_h &=& 0, \quad \text{ on } \partial K,
\end{eqnarray*}
which implies that $\velocity_h = 0$ in $K$, see \textit{e.g.} \cite[Theorem 4.4]{16_MKrízek}, concluding the proof.

Subsequently, we prove the second main result of Section \ref{subsection:Main Results:Hybridization_of_HDG_methods}.
\subsubsection*{Proof of Theorem \ref{Teo:CharacterizationHDGsolution}  (Characterization of HDG solution)}
The well-posedness of the first and second local problems, given in \eqref{eq:Characterization-of-error:theorem3firstproblem} and \eqref{eq:Characterization-of-error:theorem3secondproblem}, respectively, can be established by an argument similar to the proof of Theorem~\ref{Teo:Localproblem}. Furthermore, by summing each equation of the systems \eqref{eq:Characterization-of-error:theorem3firstproblem} and \eqref{eq:Characterization-of-error:theorem3secondproblem}, we observe that the resulting expressions correspond precisely to those defining the HDG method. Therefore, the solution $(\Sigma, \Phi, \vect{U})$ satisfies the local problem of \eqref{eq:MixedFormulation:Weakform}. It remains to prove the equation stated in \eqref{eq:Characterization-of-error:theorem3energy}. To that end, we rewrite the equations \eqref{eq:Characterization-of-error:theorem3:a}–\eqref{eq:Characterization-of-error:theorem3:b} of the first local problem by replacing the test functions $\etab^t$ and $\etab^n$ with $\mub^t$ and $\mub^n$, respectively. Then, by taking $\chi = \Sigmafourtheta$ and $\varphi = \Phifourtheta$ in those equations, we obtain
\begin{subequations}
\begin{alignat*}{4} 
\Kprod{\Sigmafourthmu}{\Sigmafourtheta}-\Kprod{\Ufourthmu}{\curl \, \Sigmafourtheta}+ \pKprod{ \mub^t\cdot \normal^\perp}{\Sigmafourtheta
} & =0,\\   
\Kprod{\Phifourthmu}{\Phifourtheta}-\Kprod{\Ufourthmu}{\grad \Phifourtheta}+\pKprod{\mub^n \cdot \normal}{\Phifourtheta
} & =0.
\end{alignat*}
Integrating by parts, we have
\begin{alignat}{4}  \label{eq:proof_Theo_charactization:c}
\Kprod{\rot \, \Ufourthmu}{\Sigmafourtheta}&=\Kprod{\Sigmafourthmu}{\Sigmafourtheta}+ \pKprod{ (\mub^t-\Ufourthmu) \cdot \normal^\perp}{\Sigmafourtheta
} ,\\   \label{eq:proof_Theo_charactization:d}
-\Kprod{\divergence \Ufourthmu}{\Phifourtheta}&=\Kprod{\Phifourthmu}{\Phifourtheta}+\pKprod{(\mub^n-\Ufourthmu) \cdot \normal}{\Phifourtheta
} .
\end{alignat}
\end{subequations}
and taking $\zb = \Ufourthmu$ in the third equation \eqref{eq:Characterization-of-error:theorem3:c} of the first local problem, 
\begin{alignat}{4}\nonumber 
\Kprod{\Sigmafourtheta}{\rot \, \Ufourthmu}+\pKprod{\Sigmacheckfourths}{\Ufourthmu \cdot \normal^\perp
}   - 
\Kprod{\Phifourtheta}{\divergence \Ufourthmu}
\\+\pKprod{\Phihatfourths}{\Ufourthmu \cdot \normal}
 = 0. \label{eq:proof_Theo_charactization:e}
\end{alignat}
Replacing  equations \eqref{eq:proof_Theo_charactization:c} and  \eqref{eq:proof_Theo_charactization:d} in \eqref{eq:proof_Theo_charactization:e}, we find that
\begin{alignat}{4}\nonumber 
\Kprod{\Sigmafourthmu}{\Sigmafourtheta}+ \pKprod{ (\mub^t-\Ufourthmu) \cdot \normal^\perp}{\Sigmafourtheta
}+\pKprod{\Sigmacheckfourths }{\Ufourthmu \cdot \normal^\perp
} \\ + \Kprod{\Phifourthmu}{\Phifourtheta}+\pKprod{(\mub^n-\Ufourthmu) \cdot \normal}{\Phifourtheta
}+\pKprod{\Phihatfourths}{\Ufourthmu \cdot \normal}
 = 0, \label{eq:proof_Theo_charactization:f}
\end{alignat}
and considering from \eqref{eq:Characterization-of-error:theorem3:d}-\eqref{eq:Characterization-of-error:theorem3:e}  that $\Sigmacheckfourths=\Sigmafourtheta+\alpha^{-1}(\Ufourtheta-\etab^t)\cdot \normal^\perp $ and $\Phihatfourths=\Phifourtheta+\tau^{-1}(\Ufourtheta -\etab^n)\cdot \normal$, we get
\begin{alignat*}{4}\nonumber
\Kprod{\Sigmafourthmu}{\Sigmafourtheta}+ \Kprod{\Phifourthmu}{\Phifourtheta} 
&+\pKprod{\alpha^{-1} (\Ufourthmu  - \mub^t)\cdot \normal^\perp)}{(\Ufourtheta   - \etab^t)\cdot \normal^\perp}  \\
&+\pKprod{\tau^{-1} (\Ufourthmu   -\mub^n )\cdot \normal)}{(\Ufourtheta   - \etab^n)\cdot \normal} \nonumber \\
&=- \pKprod{\Sigmacheckfourths }{\mub^t\cdot \normal^\perp} - \pKprod{\Phihatfourths}{\mub^n \cdot \normal}, 
\end{alignat*}
and summing over the elements $K\in \Th$, this yields
\begin{alignat}{4}\nonumber
\Thprod{\Sigmafourthmu}{\Sigmafourtheta}+ \Thprod{\Phifourthmu}{\Phifourtheta} 
&+\pThprod{\alpha^{-1} (\Ufourthmu  - \mub^t)\cdot \normal^\perp)}{(\Ufourtheta   - \etab^t)\cdot \normal^\perp}  \\
&+\pThprod{\tau^{-1} (\Ufourthmu   -\mub^n )\cdot \normal)}{(\Ufourtheta   - \etab^n)\cdot \normal} \nonumber \\
&=- \pThprod{\Sigmacheckfourths }{\mub^t\cdot \normal^\perp} - \pThprod{\Phihatfourths}{\mub^n \cdot \normal} \label{eq:proof_Theo_charactization:g}.
\end{alignat}
Now, taking $\chi = \Sigmafourthf$ and $\varphi = \Phifourthf$ in the first two equations that define the first local problem \eqref{eq:Characterization-of-error:theorem3:a}-\eqref{eq:Characterization-of-error:theorem3:b}, and replacing $\etab^t$ and $\etab^n$ with $\mub^t$ and $\mub^n$, we obtain
\begin{subequations}
\begin{alignat*}{4} 
\Kprod{\Sigmafourthmu}{\Sigmafourthf}-\Kprod{\Ufourthmu}{\curl \, \Sigmafourthf}+ \pKprod{\mub^t \cdot \normal^\perp}{\Sigmafourthf} & =0,\\  
\Kprod{\Phifourthmu}{\Phifourthf}-\Kprod{\Ufourthmu}{\grad \Phifourthf}+\pKprod{\mub^n \cdot \normal}{\Phifourthf
} & =0.
\end{alignat*}
\end{subequations}
\begin{subequations}
Integrating by parts, we have
\begin{alignat}{4}  \label{eq:proof_Theo_charactization:j}
\Kprod{\rot \, \Ufourthmu}{\Sigmafourthf}&=\Kprod{\Sigmafourthmu}{\Sigmafourthf}+ \pKprod{ (\mub^t-\Ufourthmu) \cdot \normal^\perp}{\Sigmafourthf
} ,\\   \label{eq:proof_Theo_charactization:k}
-\Kprod{\divergence \Ufourthmu}{\Phifourthf}&=\Kprod{\Phifourthmu}{\Phifourthf}+\pKprod{(\mub^n-\Ufourthmu) \cdot \normal}{\Phifourthf
} ,
\end{alignat}
\end{subequations}
and by setting $\zb = \Ufourthmu$ in the third equation of the second problem \eqref{eq:Characterization-of-error:theorem3:h}, and substituting the terms in \eqref{eq:proof_Theo_charactization:j} and \eqref{eq:proof_Theo_charactization:k}, yield
\begin{alignat*}{4}
\Kprod{\Sigmafourthmu}{\Sigmafourthf}+ \pKprod{ (\mub^t-\Ufourthmu )\cdot \normal^\perp}{\Sigmafourthf
}+ \pKprod{\sigmacheck_\fb }{\Ufourthmu \cdot  \normal^\perp}\\ +\Kprod{\Phifourthmu}{\Phifourthf}+\pKprod{(\mub^n-\Ufourthmu) \cdot \normal}{\Phifourthf} +\pKprod{\widehat{\Phi}_\fb}{\Ufourthmu \cdot \normal}  = \Kprod{\fb}{ \Ufourthmu}.
\end{alignat*}
Considering by \eqref{eq:Characterization-of-error:theorem3:i}-\eqref{eq:Characterization-of-error:theorem3:j} that $\widecheck{\Sigma}_{\fb}=\Sigmasecondf+\alpha^{-1} \Usecondf \cdot \normal^\perp$ and $\widehat{\Phi}_{\fb}=\Phifourthf+\tau^{-1} \Ufourthf \cdot \normal,$ we obtain
\begin{alignat}{4}\nonumber
\Kprod{\Sigmafourthmu}{\Sigmafourthf}+ \pKprod{\mub^t \cdot \normal^\perp}{\Sigmafourthf
}+ \pKprod{\alpha^{-1} \Usecondf \cdot \normal^\perp }{\Ufourthmu \cdot  \normal^\perp}\\ +\Kprod{\Phifourthmu}{\Phifourthf}+\pKprod{\mub^n \cdot \normal}{\Phifourthf} +\pKprod{\tau^{-1} \Ufourthf \cdot \normal}{\Ufourthmu \cdot \normal}  = \Kprod{\fb}{ \Ufourthmu}, 
 \label{eq:proof_Theo_charactization:n}
\end{alignat}
which implies
\begin{alignat}{4}\nonumber
\Kprod{\Sigmafourthmu}{\Sigmafourthf}+ \pKprod{\alpha^{-1} \Usecondf \cdot \normal^\perp }{(\Ufourthmu  -\mub^t) \cdot \normal^\perp} +\Kprod{\Phifourthmu}{\Phifourthf}\\ +\pKprod{\tau^{-1} \Ufourthf \cdot \normal}{(\Ufourthmu-\mub^n)\cdot \normal}  = \Kprod{\fb}{ \Ufourthmu}-\pKprod{\widecheck{\Sigma}_{\fb}}{\mub^t \cdot \normal^\perp}-\pKprod{\widehat{\Phi}_{\fb}}{\mub^n \cdot \normal}. 
 \label{eq:proof_Theo_charactization:o}
\end{alignat}
\begin{subequations}
On the other hand, by taking $\chi = \Sigmafourthmu$ and $\varphi = \Phifourthmu$ in the first two equations that define the second local problem \eqref{eq:Characterization-of-error:theorem3:f}-\eqref{eq:Characterization-of-error:theorem3:g}, we obtain
\begin{alignat*}{4} 
\Kprod{\Sigmafourthf}{\Sigmafourthmu}-\Kprod{\Ufourthf}{\curl \, \Sigmafourthmu}  & =0, \\  
\Kprod{\Phifourthf}{\Phifourthmu}-\Kprod{\Ufourthf}{\grad \Phifourthmu} & =0,
\end{alignat*}
\end{subequations}
\begin{subequations}
and integrating by parts, then
\begin{alignat}{4} \label{eq:Characterization-of-error:theorem3:r}
\Kprod{\rot \, \Ufourthf}{ \Sigmafourthmu}&= \Kprod{\Sigmafourthf}{\Sigmafourthmu}  -\pKprod{\Ufourthf \cdot \normal^\perp }{\Sigmafourthmu}, \\  \label{eq:Characterization-of-error:theorem3:s}
-\Kprod{\divergence \Ufourthf}{ \Phifourthmu}&=\Kprod{\Phifourthf}{\Phifourthmu}-\pKprod{\Ufourthf \cdot \normal }{\Phifourthmu} .
\end{alignat}
\end{subequations}
Taking $\zb = \Ufourthf$ in the third equation \eqref{eq:Characterization-of-error:theorem3:c} of the first local problem and considering $\mub^t$ and $\mub^n$ instead of $\etab^t$ and $\etab^n$, we obtain
\begin{alignat}{4}
\Kprod{\Sigmafourthmu}{\rot \, \Ufourthf}+\pKprod{\Sigmacheckfourthmu }{\Ufourthf\cdot \normal^\perp
}   - 
\Kprod{\Phifourthmu}{\divergence \Ufourthf}
+\pKprod{\Phihatfourthmu}{\Ufourthf \cdot \normal}
& = 0.\label{eq:Characterization-of-error:theorem3:t}
\end{alignat}
If we replace equations \eqref{eq:Characterization-of-error:theorem3:r} and \eqref{eq:Characterization-of-error:theorem3:s} in \eqref{eq:Characterization-of-error:theorem3:t}, then 
\begin{alignat*}{4}
\Kprod{\Sigmafourthf}{\Sigmafourthmu}  -\pKprod{\Ufourthf \cdot \normal^\perp }{\Sigmafourthmu}+\pKprod{\Sigmacheckfourthmu }{\Ufourthf\cdot \normal^\perp
}   - 
\Kprod{\Phifourthf}{\Phifourthmu} \nonumber \\-\pKprod{\Ufourthf \cdot \normal }{\Phifourthmu}  
+\pKprod{\Phihatfourthmu}{\Ufourthf \cdot \normal}
 = 0,
\end{alignat*}
and using the definition of $\Sigmacheckfourthmu$ and  $\Phihatfourthmu$, given in equations \eqref{eq:Characterization-of-error:theorem3:d} and \eqref{eq:Characterization-of-error:theorem3:e}, we have
\begin{alignat}{4}\nonumber
\Kprod{\Sigmafourthmu}{\Sigmafourthf}+ \pKprod{ \Usecondf \cdot \normal^\perp }{\alpha^{-1}(\Ufourthmu-\mub^t) \cdot  \normal^\perp} +\Kprod{\Phifourthmu}{\Phifourthf}\\ +\pKprod{ \Ufourthf \cdot \normal}{\tau^{-1}(\Ufourthmu-\mub^n) \cdot \normal}=0.\label{eq:Characterization-of-error:theorem3:v}
\end{alignat}
Then, replacing the expression \eqref{eq:Characterization-of-error:theorem3:v} into \eqref{eq:proof_Theo_charactization:o}, we find
\begin{alignat*}{4} 
\Kprod{\fb}{ \Ufourthmu}-\pKprod{\widecheck{\Sigma}_{\fb}}{\mub^t \cdot \normal^\perp}-\pKprod{\widehat{\Phi}_{\fb}}{\mub^n \cdot \normal}=0.
\end{alignat*}  
Summing over all the elements $K \in \Th$, we obtain
\begin{alignat}{4} 
\Thprod{\fb}{ \Ufourthmu}=\pThprod{\widecheck{\Sigma}_{\fb}}{\mub^t \cdot \normal^\perp}+\pThprod{\widehat{\Phi}_{\fb}}{\mub^n \cdot \normal}.
 \label{eq:proof_Theo_charactization:w}
\end{alignat}  
Now, using the global equations of the HDG method,
\begin{alignat*}{4} 
\pThprod{\Sigmacheckfourths + \widecheck{\Sigma}_{\fb}}{\mub^t \cdot \normal^\perp }&=0,\\
\pThprod{\Phihatfourths + \widehat{\Phi}_{\fb}}{\mub^n \cdot \normal }&=0,
\end{alignat*}  
and substituting into \eqref{eq:proof_Theo_charactization:w}, we follow
\begin{alignat}{4} 
\Thprod{\fb}{ \Ufourthmu}=-\pThprod{\Sigmacheckfourths}{\mub^t \cdot \normal^\perp}-\pThprod{\Phihatfourths}{\mub^n \cdot \normal}.
 \label{eq:proof_Theo_charactization:w1}
\end{alignat}  
Combining this last identity with \eqref{eq:proof_Theo_charactization:g},  we obtain the result stated in  \eqref{eq:Characterization-of-error:theorem3energy}.

To conclude the proof, observe that the bilinear form $a_h(\cdot,\cdot)$ is symmetric and positive definite. Consequently, the last result implies that the pair $(\velocitycheck_h,\velocityhat_h)$ corresponds to the minimizer of the total energy functional given in \eqref{eq:Energy}.
\section{Numerical Experiments}\label{section:NumericalExperiments}
In this section, we present numerical experiments designed to validate the theoretical convergence properties of the proposed HDG methods. Each experiment corresponds to a different type of boundary condition: electric, magnetic, and Dirichlet. We use triangles and the squares. We construct the meshes by partitioning the domain $\Omega=(0,1)^2$ into squares of side length $2^{-l}$, with $l \ge 2$. To use triangles, we further subdivide the squares into two triangular elements.

For each experiment, we compute the errors 
\[
e_\sigma:=\Thnorm{\sigma-\sigma_h}, e_\phi:=\Thnorm{\phi-\phi_h}, e_\velocity:=\Thnorm{\velocity-\velocity_h},
e_{\sigmacheck}:=\pThnorm{\sigma-\sigmacheck_h}, e_{\phihat}:=\pThnorm{\phi-\phihat_h}.
\]
 For each error, we also calculate the numerical order of convergence  $\text{e.o.c.}:=\frac{\log \left(e_{\mathcal{T}_1} / e_{\mathcal{T}_2}\right)}{\log \left(h_1 / h_2\right)},$ where $e_{\mathcal{T}_1}$ and $e_{\mathcal{T}_2}$ are the errors computed on two consecutive meshes with step sizes $h_1$ and $h_2$, respectively. 

\begin{table}[h] 
\centering
\tiny
\begin{tabular}{@{}lcl@{\hskip .1in}cc@{\hskip .1in}cc@{\hskip .1in}cc@{\hskip .1in}c@{\hskip .1in}cc@{\hskip .1in}cc@{\hskip .1in}cc@{\hskip .1in}cc}
\toprule
&&  & \multicolumn{2}{c}{$\sigma_h$} && \multicolumn{2}{c}{$\velocity_h$} && \multicolumn{2}{c}{${\phi}_{h}$} && \multicolumn{2}{c}{${\sigmacheck}_{h}$} && \multicolumn{2}{c}{$\phihat_h$}  \\
\cmidrule(lr){4-5} \cmidrule(lr){7-8} \cmidrule(lr){10-11} \cmidrule(lr){13-14} \cmidrule(lr){16-17} 
$k$
& $h$     &&  $e_\sigma$   &e.o.c.&&  $e_\velocity$   &e.o.c.&&  $e_{\phi}$   & e.o.c. && $e_{\sigmacheck}$   & e.o.c. && $e_{\phihat}$ & e.o.c. \\
\midrule
\multicolumn{17}{c}{\normalsize{Triangles\phantom{$R^\big|q_|$}}}
 \\
\midrule
\multirow{6}{*}{0}
&5.00e-01 && 9.70e-01 & -    && 2.67e+00 & -    && 2.73e+00 & -    && 4.74e+00 & -    && 9.11e+00 & - \\
&2.50e-01 && 4.68e-01 & 1.05 && 1.64e+00 & 0.71 && 1.40e+00 & 0.97 && 2.98e+00 & 0.67 && 6.92e+00 & 0.40\\
&1.25e-01 && 2.28e-01 & 1.03 && 8.94e-01 & 0.87 && 6.84e-01 & 1.03 && 1.93e+00 & 0.63 && 4.91e+00 & 0.50\\
&6.25e-02 && 1.12e-01 & 1.03 && 4.64e-01 & 0.95 && 3.35e-01 & 1.03 && 1.30e+00 & 0.58 && 3.45e+00 & 0.51 \\
&3.12e-02 && 5.55e-02 & 1.01 && 2.36e-01 & 0.98 && 1.66e-01 & 1.02 && 8.91e-01 & 0.54 && 2.42e+00 & 0.51 \\
&1.56e-02 && 2.76e-02 & 1.01 && 1.19e-01 & 0.99 && 8.24e-02 & 1.01 && 6.22e-01 & 0.52 && 1.71e+00 & 0.50\\ \hline
\multirow{6}{*}{1} 
&5.00e-01 && 3.14e-01 & -    && 6.89e-01 & -    && 7.58e-01 & -    && 2.02e+00 & -    && 2.91e+00 & - \\
&2.50e-01 && 8.02e-02 & 1.97 && 2.01e-01 & 1.77 && 1.97e-01 & 1.94 && 6.41e-01 & 1.66 && 1.13e+00 & 1.36 \\
&1.25e-01 && 1.93e-02 & 2.05 && 5.53e-02 & 1.86 && 4.91e-02 & 2.01 && 1.92e-01 & 1.73 && 4.05e-01 & 1.48 \\
&6.25e-02 && 4.69e-03 & 2.04 && 1.43e-02 & 1.95 && 1.22e-02 & 2.01 && 6.07e-02 & 1.67 && 1.43e-01 & 1.51 \\
&3.12e-02 && 1.15e-03 & 2.02 && 3.64e-03 & 1.98 && 3.03e-03 & 2.01 && 2.01e-02 & 1.59 && 5.02e-02 & 1.51 \\
&1.56e-02 && 2.86e-04 & 2.01 && 9.16e-04 & 1.99 && 7.56e-04 & 2.00  && 6.87e-03 & 1.55 && 1.77e-02 & 1.50 \\\hline
\multirow{6}{*}{2} 
&5.00e-01 && 6.85e-02 & -    && 1.78e-01 & -    && 1.65e-01 & -    && 2.41e-01 & -   && 2.68e-01 & - \\
&2.50e-01 && 8.82e-03 & 2.96 && 2.25e-02 & 2.98 && 2.11e-02 & 2.97 && 5.61e-02 & 2.1 && 5.60e-02 & 2.26 \\
&1.25e-01 && 1.12e-03 & 2.97 && 2.66e-03 & 3.08 && 2.66e-03 & 2.98 && 1.17e-02 & 2.26 && 1.31e-02 & 2.09 \\
&6.25e-02 && 1.42e-04 & 2.99 && 3.21e-04 & 3.05 && 3.34e-04 & 2.99 && 2.22e-03 & 2.40&& 2.61e-03 & 2.33 \\
&3.12e-02 && 1.78e-05 & 2.99 && 3.95e-05 & 3.02 && 4.19e-05 & 3.00 && 4.04e-04 & 2.46 && 4.84e-04 & 2.43 \\
&1.56e-02 && 2.22e-06 & 3.00 && 4.90e-06 & 3.01 && 5.25e-06 & 3.00 && 7.23e-05 & 2.48 && 8.74e-05 & 2.47  \\\hline
\multirow{6}{*}{3}  
&5.00e-01 && 1.34e-02 & -    && 2.84e-02 & -    && 2.90e-02 & -    && 6.44e-02 & -    && 1.11e-01 & - \\
&2.50e-01 && 8.27e-04 & 4.02 && 2.00e-03 & 3.83 && 1.92e-03 & 3.92 && 5.84e-03 & 3.46 && 1.12e-02 & 3.31 \\
&1.25e-01 && 4.93e-05 & 4.07 && 1.34e-04 & 3.90  && 1.21e-04 & 3.99 && 4.94e-04 & 3.56 && 1.06e-03 & 3.40\\
&6.25e-02 && 2.98e-06 & 4.05 && 8.66e-06 & 3.95 && 7.51e-06 & 4.01 && 4.25e-05 & 3.54 && 9.64e-05 & 3.46 \\
&3.12e-02 && 1.83e-07 & 4.03 && 5.49e-07 & 3.98 && 4.67e-07 & 4.01 && 3.70e-06 & 3.52 && 8.62e-06 & 3.48 \\
&1.56e-02 && 1.13e-08 & 4.01 && 3.45e-08 & 3.99 && 2.91e-08 & 4.00 && 3.24e-07 & 3.51 && 7.65e-07 & 3.49 \\
\hline
\multicolumn{17}{c}{\normalsize{Squares\phantom{$R^\big|q_|$}}}
 \\
\midrule
\multirow{6}{*}{0}
&5.00e-01 && 1.08e+00 & -    && 3.10e+00 & -    && 3.24e+00 & -    && 5.69e+00 & -    && 6.55e+00 & - \\
&2.50e-01 && 5.87e-01 & 0.88 && 2.06e+00 & 0.59 && 1.76e+00 & 0.88 && 3.55e+00 & 0.68 && 4.89e+00 & 0.42 \\
&1.25e-01 && 2.88e-01 & 1.03 && 1.19e+00 & 0.79 && 8.63e-01 & 1.03 && 2.02e+00 & 0.82 && 3.35e+00 & 0.55 \\
&6.25e-02 && 1.40e-01 & 1.04 && 6.32e-01 & 0.92 && 4.20e-01 & 1.04 && 1.19e+00 & 0.76 && 2.30e+00 & 0.54 \\
&3.12e-02 && 6.88e-02 & 1.02 && 3.24e-01 & 0.97 && 2.06e-01 & 1.02 && 7.53e-01 & 0.66 && 1.60e+00 & 0.52 \\
&1.56e-02 && 3.41e-02 & 1.01 && 1.64e-01 & 0.98 && 1.02e-01 & 1.01 && 5.02e-01 & 0.59 && 1.12e+00 & 0.51 \\ \hline
\multirow{6}{*}{1} 
&5.00e-01 && 4.46e-01 & -    && 9.28e-01 & -    && 1.10e+00 & -    && 1.93e+00 & -    && 2.03e+00 & - \\
&2.50e-01 && 1.11e-01 & 2.01 && 2.18e-01 & 2.09 && 2.97e-01 & 1.89 && 5.98e-01 & 1.69 && 6.16e-01 & 1.72 \\
&1.25e-01 && 2.66e-02 & 2.05 && 5.37e-02 & 2.02 && 7.37e-02 & 2.01 && 1.78e-01 & 1.75 && 1.72e-01 & 1.84 \\
&6.25e-02 && 6.49e-03 & 2.04 && 1.35e-02 & 1.99 && 1.82e-02 & 2.02 && 5.59e-02 & 1.67 && 5.69e-02 & 1.6 \\
&3.12e-02 && 1.60e-03 & 2.02 && 3.39e-03 & 1.99 && 4.53e-03 & 2.01 && 1.85e-02 & 1.6  && 2.04e-02 & 1.48 \\
&1.56e-02 && 3.98e-04 & 2.01 && 8.52e-04 & 1.99 && 1.13e-03 & 2.0  && 6.30e-03 & 1.55 && 7.35e-03 & 1.47 \\            \hline
\multirow{6}{*}{2} 
&5.00e-01 && 1.04e-01 & -    && 2.87e-01 & -    && 2.76e-01 & -    && 4.90e-01 & -    && 3.40e-01 & - \\
&2.50e-01 && 1.32e-02 & 2.98 && 3.03e-02 & 3.24 && 3.58e-02 & 2.95 && 1.19e-01 & 2.04 && 9.79e-02 & 1.80 \\
&1.25e-01 && 1.59e-03 & 3.05 && 3.59e-03 & 3.08 && 4.54e-03 & 2.98 && 2.13e-02 & 2.48 && 1.93e-02 & 2.35 \\
&6.25e-02 && 1.96e-04 & 3.02 && 4.43e-04 & 3.02 && 5.70e-04 & 2.99 && 3.76e-03 & 2.50 && 3.55e-03 & 2.44 \\
&3.12e-02 && 2.43e-05 & 3.01 && 5.51e-05 & 3.01 && 7.14e-05 & 3.00 && 6.64e-04 & 2.50 && 6.41e-04 & 2.47 \\
&1.56e-02 && 3.02e-06 & 3.01 && 6.88e-06 & 3.00 && 8.93e-06 & 3.00 && 1.18e-04 & 2.50 && 1.14e-04 & 2.49 \\      \hline
\multirow{6}{*}{3}  
&5.00e-01 && 2.46e-02 & -    && 4.52e-02 & -    && 5.32e-02 & -    && 1.30e-01 & -    && 1.53e-01 & - \\
&2.50e-01 && 1.87e-03 & 3.72 && 3.58e-03 & 3.66 && 3.79e-03 & 3.81 && 1.52e-02 & 3.10 && 1.49e-02 & 3.36 \\
&1.25e-01 && 1.10e-04 & 4.08 && 2.80e-04 & 3.67 && 2.42e-04 & 3.97 && 1.26e-03 & 3.60 && 1.11e-03 & 3.75 \\
&6.25e-02 && 6.09e-06 & 4.18 && 1.93e-05 & 3.86 && 1.49e-05 & 4.02 && 9.50e-05 & 3.73 && 7.50e-05 & 3.88 \\
&3.12e-02 && 3.49e-07 & 4.12 && 1.25e-06 & 3.95 && 9.17e-07 & 4.02 && 7.50e-06 & 3.66 && 5.33e-06 & 3.81 \\
&1.56e-02 && 2.09e-08 & 4.06 && 7.90e-08 & 3.98 && 5.69e-08 & 4.01 && 6.25e-07 & 3.59 && 4.11e-07 & 3.70 \\
\bottomrule
\end{tabular}
\caption{Experiment \ref{sec: Experiment1}. Electric boundary conditions.}
\label{tab:HistoryConvergenceExperiment1}
\end{table}
\subsection{Experiment 1 (Electric boundary condition)} \label{sec: Experiment1}
In this first experiment, we consider the same test case studied in the context of conforming mixed methods of \cite{23_Arnold,18_HWang}.  We  choose the source term $\fb$ such that the exact solution is
\begin{alignat*}{4}
 \velocity(x, y)&=\left[\begin{array}{c}
\cos(\pi x)\sin(\pi y) \\
  2\sin(\pi x) \cos(\pi y) 
\end{array}\right],\quad
\phi(x,y)&=3\pi\sin(\pi x)\sin(\pi y), \quad
\sigma(x,y)&=\pi\cos(\pi x)\cos(\pi y).
\end{alignat*} 
In Table \ref{tab:HistoryConvergenceExperiment1}, we observe optimal convergence rates for the approximate rotational $\sigma_h$, divergence $\phi_h$, the vector field $\velocity_h$, and the numerical fluxes $\sigmacheck_h$ and $\phihat_h$ for polynomial degrees $k = 0, 1, 2, 3$, over a sequence of uniformly refined triangular and square meshes with mesh size $h$. The errors  $e_\sigma$, $e_\phi$, and $e_\velocity$ converge with optimal order of convergence $k+1$, while the errors $e_{\sigmacheck}$ and $e_{\phihat}$ show an order of $k + 1/2$.  

In \cite[Section 2]{23_Arnold}, the same experiment is carried out for the case $k = 2$, using Raviart--Thomas finite elements of degree $2$ (for triangles) for the vector field $\velocity$ and continuous Lagrange elements of degree $2$ for the auxiliary variable $\sigma = \rot \, \velocity$. The numerical results exhibit optimal convergence of order $2$ for the vector field and order $3$ for the auxiliary variable. Similarly, in \cite[Section 5.1]{18_HWang}, the case $k = 1$ is considered, employing BDM elements of degree $1$ for the vector field $\velocity$, and continuous Lagrange elements of degree $1$ for the auxiliary variables $\sigma$ and $\phi$. The results show optimal convergence of order $2$ for both the vector field and the auxiliary variables. In comparison, our HDG method with $k = 2$ achieves optimal convergence $3$ for $\velocity_h$, $\sigma_h$, and $\phi_h$, which is competitive with the approaches discussed above.


\subsection{Experiment 2 (Magnetic boundary condition)} \label{sec: Experiment2}
 We choose $\fb$ so that the exact solution is 
\begin{alignat*}{4}
 \velocity(x, y)&=\left[\begin{array}{c}
\sin(2\pi x)\cos(\pi y) \\
  2 \cos(2\pi x) \sin(\pi y) 
\end{array}\right],\quad 
\phi(x,y)=-4\pi\cos(2\pi x)\cos(\pi y), \\ 
\sigma(x,y)&=-3\pi\sin(2\pi x)\sin(\pi y).
\end{alignat*} 
As shown in Table~\ref{tab:HistoryConvergenceExperiment2}, the HDG method exhibits optimal convergence of order \( k+1 \) for the volumetric variables \( \sigma_h \), \( \phi_h \), and \( \velocity_h \), and order \( k+1/2 \) for the numerical fluxes \( \sigmacheck_h \) and \( \phihat_h \).
\begin{table}[h] 
\centering
\tiny
\begin{tabular}{@{}lcl@{\hskip .1in}cc@{\hskip .1in}cc@{\hskip .1in}cc@{\hskip .1in}c@{\hskip .1in}cc@{\hskip .1in}cc@{\hskip .1in}cc@{\hskip .1in}cc}
\toprule
&&  & \multicolumn{2}{c}{$\sigma_h$} && \multicolumn{2}{c}{$\velocity_h$} && \multicolumn{2}{c}{${\phi}_{h}$} && \multicolumn{2}{c}{${\sigmacheck}_{h}$} && \multicolumn{2}{c}{$\phihat_h$}  \\
\cmidrule(lr){4-5} \cmidrule(lr){7-8} \cmidrule(lr){10-11} \cmidrule(lr){13-14} \cmidrule(lr){16-17} 
$k$
& $h$     &&  $e_\sigma$   &e.o.c.&&  $e_\velocity$   &e.o.c.&&  $e_{\phi}$   & e.o.c. && $e_{\sigmacheck}$   & e.o.c. && $e_{\phihat}$ & e.o.c. \\
\hline
\multicolumn{17}{c}{\normalsize{Triangles\phantom{$R^\big|q_|$}}}
 \\
\midrule
\multirow{6}{*}{0}
&5.00e-01 && 5.07e+00 & -    && 5.54e+00 & -    && 5.83e+00 & -    && 1.73e+01 & -    && 2.17e+01 & - \\
&2.50e-01 && 2.21e+00 & 1.20  && 4.15e+00 & 0.42 && 2.75e+00 & 1.08 && 1.18e+01 & 0.56 && 1.39e+01 & 0.64 \\
&1.25e-01 && 1.05e+00 & 1.06 && 2.33e+00 & 0.83 && 1.39e+00 & 0.99 && 7.98e+00 & 0.56 && 1.01e+01 & 0.46 \\
&6.25e-02 && 5.14e-01 & 1.04 && 1.22e+00 & 0.93 && 6.83e-01 & 1.02 && 5.49e+00 & 0.54 && 7.15e+00 & 0.50 \\
&3.12e-02 && 2.54e-01 & 1.02 && 6.23e-01 & 0.97 && 3.38e-01 & 1.01 && 3.83e+00 & 0.52 && 5.03e+00 & 0.51 \\
&1.56e-02 && 1.26e-01 & 1.01 && 3.14e-01 & 0.99 && 1.68e-01 & 1.01 && 2.69e+00 & 0.51 && 3.55e+00 & 0.50 \\
 \hline
\multirow{6}{*}{1} 
&5.00e-01 && 2.39e+00 & -    &&  2.56e+00 & -   && 1.79e+00 & -    && 8.16e+00 & -    && 7.31e+00 & - \\
&2.50e-01 && 4.78e-01 & 2.32 && 7.59e-01 & 1.75 && 6.13e-01 & 1.55 && 3.05e+00 & 1.42 && 3.64e+00 & 1.01 \\
&1.25e-01 && 1.20e-01 & 1.99 && 2.10e-01 & 1.86 && 1.56e-01 & 1.98 && 1.04e+00 & 1.55 && 1.30e+00 & 1.49 \\
&6.25e-02 && 2.97e-02 & 2.02 && 5.48e-02 & 1.94 && 3.88e-02 & 2.00  && 3.53e-01 & 1.56 && 4.53e-01 & 1.52 \\
&3.12e-02 && 7.38e-03 & 2.01 && 1.39e-02 & 1.98 && 9.66e-03 & 2.01 && 1.22e-01 & 1.53 && 1.59e-01 & 1.51 \\
&1.56e-02 && 1.84e-03 & 2.01 && 3.50e-03 & 1.99 && 2.41e-03 & 2.00  && 4.26e-02 & 1.52 && 5.59e-02 & 1.51 \\
\hline
\multirow{6}{*}{2} 
&5.00e-01 && 5.97e-01 & -    && 9.69e-01 & -    && 8.05e-01 & -    && 2.06e+00 & -    && 2.09e+00 & - \\
&2.50e-01 && 7.69e-02 & 2.96 && 1.38e-01 & 2.81 && 9.98e-02 & 3.01 && 2.28e-01 & 3.18 && 2.22e-01 & 3.23 \\
&1.25e-01 && 9.82e-03 & 2.97 && 1.62e-02 & 3.09 && 1.28e-02 & 2.96 && 5.79e-02 & 1.98 && 6.08e-02 & 1.87 \\
&6.25e-02 && 1.24e-03 & 2.98 && 1.92e-03 & 3.08 && 1.63e-03 & 2.98 && 1.19e-02 & 2.28 && 1.30e-02 & 2.23 \\
&3.12e-02 && 1.56e-04 & 2.99 && 2.33e-04 & 3.04 && 2.04e-04 & 2.99 && 2.22e-03 & 2.42 && 2.45e-03 & 2.40 \\
&1.56e-02 && 1.95e-05 & 3.00  && 2.88e-05 & 3.02 && 2.56e-05 & 3.00  && 4.00e-04 & 2.47 && 4.45e-04 & 2.46 \\
\hline
\multirow{6}{*}{3}  
&5.00e-01 && 1.47e-01 & -    && 2.27e-01 & -    && 1.77e-01 & -    && 5.25e-01 & -    && 7.04e-01 & - \\
&2.50e-01 && 1.04e-02 & 3.82 && 1.68e-02 & 3.76 && 1.34e-02 & 3.72 && 6.31e-02 & 3.06 && 7.52e-02 & 3.23 \\
&1.25e-01 && 6.56e-04 & 3.99 && 1.13e-03 & 3.89 && 8.66e-04 & 3.95 && 6.00e-03 & 3.39 && 7.19e-03 & 3.39 \\
&6.25e-02 && 4.06e-05 & 4.01 && 7.33e-05 & 3.95 && 5.42e-05 & 4.00  && 5.41e-04 & 3.47 && 6.59e-04 & 3.45 \\
&3.12e-02 && 2.52e-06 & 4.01 && 4.64e-06 & 3.98 && 3.38e-06 & 4.00  && 4.80e-05 & 3.50  && 5.90e-05 & 3.48 \\
&1.56e-02 && 1.57e-07 & 4.01 && 2.91e-07 & 3.99 && 2.11e-07 & 4.00  && 4.24e-06 & 3.50  && 5.24e-06 & 3.49 \\
\hline
\multicolumn{17}{c}{\normalsize{Squares\phantom{$R^\big|q_|$}}}
 \\
\midrule
\multirow{6}{*}{0}
&5.00e-01 && 6.01e+00 & -    && 5.01e+00 & -     && 6.28e+00 & -    && 1.87e+01 & -    && 1.56e+01 & - \\
&2.50e-01 && 2.92e+00 & 1.04 && 5.14e+00 & -0.03 && 3.26e+00 & 0.94 && 1.14e+01 & 0.71 && 9.38e+00 & 0.73 \\
&1.25e-01 && 1.38e+00 & 1.08 && 3.04e+00 & 0.76 && 1.68e+00 & 0.96 && 6.79e+00 & 0.74 && 6.60e+00 & 0.51 \\
&6.25e-02 && 6.53e-01 & 1.08 && 1.62e+00 & 0.90  && 8.35e-01 & 1.01 && 4.14e+00 & 0.71 && 4.62e+00 & 0.51 \\
&3.12e-02 && 3.16e-01 & 1.04 && 8.33e-01 & 0.96 && 4.14e-01 & 1.01 && 2.71e+00 & 0.61 && 3.25e+00 & 0.51 \\
&1.56e-02 && 1.56e-01 & 1.02 && 4.20e-01 & 0.99 && 2.06e-01 & 1.01 && 1.84e+00 & 0.55 && 2.29e+00 & 0.50 \\
 \hline
\multirow{6}{*}{1} 
&5.00e-01 && 2.16e+00 & -    && 3.92e+00 & -    && 2.87e+00 & -    && 6.72e+00 & -    && 5.82e+00 & - \\
&2.50e-01 && 7.00e-01 & 1.63 && 9.34e-01 & 2.07 && 8.72e-01 & 1.72 && 2.71e+00 & 1.31 && 2.31e+00 & 1.33 \\
&1.25e-01 && 1.74e-01 & 2.00  && 2.40e-01 & 1.96 && 2.23e-01 & 1.96 && 7.30e-01 & 1.89 && 7.06e-01 & 1.71 \\
&6.25e-02 && 4.27e-02 & 2.03 && 6.03e-02 & 1.99 && 5.55e-02 & 2.01 && 2.14e-01 & 1.77 && 2.22e-01 & 1.67 \\
&3.12e-02 && 1.06e-02 & 2.02 && 1.51e-02 & 1.99 && 1.38e-02 & 2.01 && 6.97e-02 & 1.62 && 7.57e-02 & 1.55 \\
&1.56e-02 && 2.63e-03 & 2.01 && 3.79e-03 & 2.00  && 3.45e-03 & 2.00  && 2.39e-02 & 1.54 && 2.67e-02 & 1.50 \\
\hline
\multirow{6}{*}{2} 
&5.00e-01 && 9.92e-01 & -    && 1.03e+00 & -    && 1.05e+00 & -    && 3.26e+00 & -    && 1.63e+00 & - \\
&2.50e-01 && 1.26e-01 & 2.98 && 1.69e-01 & 2.61 && 1.61e-01 & 2.71 && 5.49e-01 & 2.57 && 5.83e-01 & 1.49 \\
&1.25e-01 && 1.58e-02 & 3.00 && 2.09e-02 & 3.01 && 2.06e-02 & 2.97 && 1.05e-01 & 2.38 && 1.04e-01 & 2.49 \\
&6.25e-02 && 1.97e-03 & 3.00 && 2.61e-03 & 3.00 && 2.59e-03 & 2.99 && 1.86e-02 & 2.50 && 1.81e-02 & 2.52 \\
&3.12e-02 && 2.45e-04 & 3.00 && 3.26e-04 & 3.00 && 3.24e-04 & 3.00 && 3.27e-03 & 2.51 && 3.21e-03 & 2.50 \\
&1.56e-02 && 3.06e-05 & 3.00 && 4.08e-05 & 3.00 && 4.06e-05 & 3.00 && 5.76e-04 & 2.50 && 5.69e-04 & 2.50 \\
\hline
\multirow{6}{*}{3}  
&5.00e-01 && 2.22e-01 & -    && 3.36e-01 & -    && 3.39e-01 & -   &&  8.60e-01 & -   && 7.01e-01 & - \\
&2.50e-01 && 1.90e-02 & 3.54 && 2.34e-02 & 3.85 && 2.43e-02 & 3.8 && 9.23e-02 & 3.22 && 9.74e-02 & 2.85 \\
&1.25e-01 && 1.26e-03 & 3.91 && 1.78e-03 & 3.72 && 1.61e-03 & 3.92 && 8.09e-03 & 3.51 && 8.79e-03 & 3.47 \\
&6.25e-02 && 7.65e-05 & 4.05 && 1.28e-04 & 3.80 && 9.97e-05 & 4.01 && 5.82e-04 & 3.8  && 6.52e-04 & 3.75 \\
&3.12e-02 && 4.63e-06 & 4.05 && 8.41e-06 & 3.92 && 6.14e-06 & 4.02 && 4.24e-05 & 3.78 && 4.93e-05 & 3.73 \\
&1.56e-02 && 2.85e-07 & 4.02 && 5.35e-07 & 3.97 && 3.81e-07 & 4.01 && 3.39e-06 & 3.65 && 4.02e-06 & 3.61 \\
\bottomrule
\end{tabular}
\caption{Experiment \ref{sec: Experiment2}. Magnetic boundary conditions.}
\label{tab:HistoryConvergenceExperiment2}
\end{table}
\subsection{Experiment 3 (Dirichlet boundary condition)}\label{sec: Experiment3} In this final set of experiments, we investigate the experiment presented in \cite[Section 2]{23_Arnold} and \cite[Section 4]{18_HWang}.  In this simulation, $\fb$ is chosen so that the exact solution is
\begin{alignat*}{4}
 \velocity(x, y)&=\left[\begin{array}{c}
\sin(\pi x)\sin(\pi y) \\ \sin(\pi x)\sin(\pi y)  
\end{array}\right],\quad \phi(x,y)=-\pi\cos(\pi x)\sin(\pi y)-\pi\sin(\pi x)\cos(\pi y), \\\sigma(x,y)&=\pi\cos(\pi x)\sin(\pi y)-\pi \sin(\pi x)\cos(\pi y).
\end{alignat*} 

In Table \ref{tab:HistoryConvergenceExperiment3}, we report optimal convergence rates for the approximations of the vector field $\velocity_h$ and the auxiliary variables $\sigma_h$ and $\phi_h$, all achieving optimal order $k+1$. In addition, we observe that the numerical fluxes $\sigmacheck_h$ and $\phihat_h$ converge at order $k + 1/2$ for $k = 0, 1, 2, 3$. For the case $k = 0$, the asymptotic rate is not captured on coarse meshes. Therefore, in this case only, the experiments are initialized with a finer mesh size. The approximation of the vector field confirms the convergence rate predicted by \eqref{eq:Theorem:A_priori_error_estimate:b} of Theorem \ref{Teo:A_priori_error_estimates}. The auxiliary variables $\sigma_h$ and $\phi_h$ exhibit convergence rates that exceed the theoretical prediction in \eqref{eq:Theorem:A_priori_error_estimate:a}, showing an additional order in $1/2$. We observe a similar improvement for the numerical fluxes $\sigmacheck_h$ and $\phihat_h$, which exceed the bound provided in \eqref{eq:Theorem:A_priori_error_estimate:a2} by the same margin. These results confirm that the numerical experiment achieves optimal convergence rates for all variables, outperforming the theoretical predictions presented in Section~\ref{subsec:MainResults:Apriori_error_estimates}, and suggesting the possibility of improving the \textit{a priori} error estimates.

The same experiment was also carried out in \cite{23_Arnold} and \cite{18_HWang} on triangles. In \cite{23_Arnold}, a mixed formulation involving the rotational variable $\sigma = \rot \, \velocity$ is considered. The authors show that conforming finite element spaces do not preserve the inclusion of solution spaces under differential operators in the De Rham complex when Dirichlet boundary conditions are imposed, resulting in suboptimal convergence for the auxiliary variable, although optimal convergence is maintained for the vector field. They used Raviart--Thomas elements of degree $2$ for $\velocity_h$, and continuous Lagrange elements of degree $2$ for $\sigma_h$. The observed convergence rate for $\velocity_h$ is optimal of order $2$, while the order for $\sigma_h$ is $3/2$. In \cite{18_HWang}, both the rotational and divergence variables are introduced as auxiliary variables. The authors used Brezzi--Douglas--Marini elements of degree $1$ for $\velocity_h$, and continuous Lagrange elements of degree $2$ for the auxiliary variables $\sigma_h$ and $\phi_h$. The vector field converges with optimal order $2$, while the auxiliary variables exhibit convergence rates close to $3/2$, consistent with the findings in \cite{23_Arnold}. Although these formulations are different, our HDG method shows better convergence rates for auxiliary variables in the case $k = 2$, achieving order $3$, which is better than those obtained with the conforming finite element approaches discussed above.

\begin{table}[h] 
\centering
\tiny
\begin{tabular}{@{}lcl@{\hskip .1in}cc@{\hskip .1in}cc@{\hskip .1in}cc@{\hskip .1in}c@{\hskip .1in}cc@{\hskip .1in}cc@{\hskip .1in}cc@{\hskip .1in}cc}
\toprule
&&  & \multicolumn{2}{c}{$\sigma_h$} && \multicolumn{2}{c}{$\velocity_h$} && \multicolumn{2}{c}{${\phi}_{h}$} && \multicolumn{2}{c}{${\sigmacheck}_{h}$} && \multicolumn{2}{c}{$\phihat_h$}  \\
\cmidrule(lr){4-5} \cmidrule(lr){7-8} \cmidrule(lr){10-11} \cmidrule(lr){13-14} \cmidrule(lr){16-17} 
$k$
& $h$     &&  $e_\sigma$   &e.o.c.&&  $e_\velocity$   &e.o.c.&&  $e_{\phi}$   & e.o.c. && $e_{\sigmacheck}$   & e.o.c. && $e_{\phihat}$ & e.o.c. \\
\hline
\multicolumn{17}{c}{\normalsize{Triangles\phantom{$R^\big|q_|$}}}
 \\
\midrule
\multirow{6}{*}{0}
&6.25e-02 && 4.44e-01 & - && 2.36e-01 & - && 4.66e-01 & - && 4.66e+00 & - && 4.96e+00 & - \\
&3.12e-02 && 2.98e-01 & 0.58 && 1.30e-01 & 0.86 && 3.23e-01 & 0.53 && 4.40e+00 & 0.08 && 4.82e+00 & 0.04 \\
&1.56e-02 && 1.86e-01 & 0.68 && 6.99e-02 & 0.89 && 2.03e-01 & 0.67 && 3.89e+00 & 0.18 && 4.26e+00 & 0.18 \\
&7.81e-03 && 1.10e-01 & 0.77 && 3.68e-02 & 0.93 && 1.18e-01 & 0.78 && 3.23e+00 & 0.27 && 3.51e+00 & 0.28 \\
&3.91e-03 && 6.16e-02 & 0.83 && 1.90e-02 & 0.95 && 6.59e-02 & 0.84 && 2.57e+00 & 0.33 && 2.76e+00 & 0.34 \\
&1.95e-03 && 3.37e-02 & 0.87 && 9.66e-03 & 0.97 && 3.58e-02 & 0.88 && 1.99e+00 & 0.37 && 2.12e+00 & 0.38  \\\hline
\multirow{6}{*}{1} 
&5.00e-01 && 6.00e-01 & -    && 3.03e-01 & -    && 3.59e-01 & -    && 2.96e+00 & -    && 1.65e+00 & - \\
&2.50e-01 && 1.68e-01 & 1.84 && 1.09e-01 & 1.48 && 1.07e-01 & 1.74 && 1.01e+00 & 1.55 && 6.63e-01 & 1.32 \\
&1.25e-01 && 3.66e-02 & 2.20 && 3.27e-02 & 1.73 && 2.08e-02 & 2.37 && 3.06e-01 & 1.73 && 1.76e-01 & 1.91 \\
&6.25e-02 && 8.22e-03 & 2.15 && 8.80e-03 & 1.89 && 3.94e-03 & 2.40 && 9.81e-02 & 1.64 && 4.63e-02 & 1.93 \\
&3.12e-02 && 1.96e-03 & 2.07 && 2.27e-03 & 1.96 && 8.25e-04 & 2.26 && 3.31e-02 & 1.57 && 1.33e-02 & 1.80\\
&1.56e-02 && 4.81e-04 & 2.03 && 5.75e-04 & 1.98 && 1.90e-04 & 2.12 && 1.15e-02 & 1.53 && 4.17e-03 & 1.67 \\ \hline
\multirow{6}{*}{2} 
&5.00e-01 && 1.43e-01 & -    && 6.58e-02 & -    && 1.18e-01 & -    && 6.59e-01 & -    && 6.51e-01 & - \\
&2.50e-01 && 1.66e-02 & 3.11 && 1.19e-02 & 2.46 && 1.06e-02 & 3.49 && 9.73e-02 & 2.76 && 8.38e-02 & 2.96 \\
&1.25e-01 && 1.92e-03 & 3.11 && 1.55e-03 & 2.94 && 1.02e-03 & 3.38 && 1.38e-02 & 2.81 && 1.18e-02 & 2.83 \\
&6.25e-02 && 2.31e-04 & 3.06 && 1.95e-04 & 2.99 && 1.06e-04 & 3.26 && 2.18e-03 & 2.67 && 1.80e-03 & 2.71 \\
&3.12e-02 && 2.83e-05 & 3.03 && 2.45e-05 & 2.99 && 1.16e-05 & 3.19 && 3.60e-04 & 2.60 && 2.89e-04 & 2.64 \\
&1.56e-02 && 3.49e-06 & 3.02 && 3.07e-06 & 3.00 && 1.33e-06 & 3.13 && 6.11e-05 & 2.56 && 4.82e-05 & 2.58 \\ \hline
\multirow{6}{*}{3}  
&5.00e-01 && 2.58e-02 & -    && 1.42e-02 & -    && 1.30e-02 & -    && 1.35e-01 & -    && 8.24e-02 & - \\
&2.50e-01 && 1.72e-03 & 3.91 && 1.07e-03 & 3.73 && 1.08e-03 & 3.59 && 1.12e-02 & 3.59 && 9.67e-03 & 3.09 \\
&1.25e-01 && 1.01e-04 & 4.09 && 7.81e-05 & 3.77 && 5.95e-05 & 4.18 && 9.36e-04 & 3.58 && 7.58e-04 & 3.67 \\
&6.25e-02 && 5.76e-06 & 4.14 && 5.26e-06 & 3.89 && 2.85e-06 & 4.38 && 7.60e-05 & 3.62 && 5.12e-05 & 3.89 \\
&3.12e-02 && 3.36e-07 & 4.10 && 3.40e-07 & 3.95 && 1.37e-07 & 4.38 && 6.29e-06 & 3.59 && 3.43e-06 & 3.90 \\
&1.56e-02 && 2.02e-08 & 4.06 && 2.16e-08 & 3.98 && 6.79e-09 & 4.33 && 5.34e-07 & 3.56 && 2.37e-07 & 3.86 \\
\hline
\multicolumn{17}{c}{\normalsize{Squares\phantom{$R^\big|q_|$}}}
 \\
\midrule
\multirow{6}{*}{0}
&6.25e-02 && 4.86e-01 & -    && 3.15e-01 & -    && 4.86e-01 & -    && 3.85e+00 & - &  &3.85e+00 & - \\
&3.12e-02 && 3.58e-01 & 0.44 && 1.73e-01 & 0.87 && 3.58e-01 & 0.44 && 4.03e+00 & -0.07&& 4.03e+00 & -0.07 \\
&1.56e-02 && 2.39e-01 & 0.58 && 9.34e-02 & 0.89 && 2.39e-01 & 0.58 && 3.81e+00 & 0.08 && 3.81e+00 & 0.08 \\
&7.81e-03 && 1.46e-01 & 0.71 && 4.95e-02 & 0.92 && 1.46e-01 & 0.71 && 3.29e+00 & 0.21 && 3.29e+00 & 0.21 \\
&3.91e-03 && 8.39e-02 & 0.80  && 2.57e-02 & 0.94 && 8.39e-02 & 0.80  && 2.67e+00 & 0.30  && 2.67e+00 & 0.30 \\
&1.95e-03 && 4.64e-02 & 0.85 && 1.32e-02 & 0.97 && 4.64e-02 & 0.85 && 2.09e+00 & 0.35 && 2.09e+00 & 0.35 \\ \hline
\multirow{6}{*}{1} 
&5.00e-01 && 6.97e-01 & -    && 4.59e-01 & -    && 6.97e-01 & -    && 2.39e+00 & -    && 2.39e+00 & - \\
&2.50e-01 && 2.65e-01 & 1.39 && 1.19e-01 & 1.95 && 2.65e-01 & 1.39 && 1.15e+00 & 1.05 && 1.15e+00 & 1.05 \\
&1.25e-01 && 6.28e-02 & 2.08 && 3.06e-02 & 1.96 && 6.28e-02 & 2.08 && 3.43e-01 & 1.75 && 3.43e-01 & 1.75 \\
&6.25e-02 && 1.21e-02 & 2.37 && 8.06e-03 & 1.92 && 1.21e-02 & 2.37 && 8.25e-02 & 2.06 && 8.25e-02 & 2.06 \\
&3.12e-02 && 2.48e-03 & 2.29 && 2.09e-03 & 1.95 && 2.48e-03 & 2.29 && 2.02e-02 & 2.03 && 2.02e-02 & 2.03 \\
&1.56e-02 && 5.64e-04 & 2.14 && 5.31e-04 & 1.97 && 5.64e-04 & 2.14 && 5.54e-03 & 1.87 && 5.54e-03 & 1.87 \\           \hline
\multirow{6}{*}{2} 
&5.00e-01 && 1.48e-01 & -    && 1.17e-01 & -    && 1.48e-01 & -    && 5.43e-01 & -    && 5.43e-01 & - \\
&2.50e-01 && 1.76e-02 & 3.07 && 1.85e-02 & 2.66 && 1.76e-02 & 3.07 && 8.18e-02 & 2.73 && 8.18e-02 & 2.73 \\
&1.25e-01 && 2.24e-03 & 2.98 && 2.25e-03 & 3.04 && 2.24e-03 & 2.98 && 1.46e-02 & 2.48 && 1.46e-02 & 2.48 \\
&6.25e-02 && 2.77e-04 & 3.01 && 2.79e-04 & 3.01 && 2.77e-04 & 3.01 && 2.51e-03 & 2.54 && 2.51e-03 & 2.54 \\
&3.12e-02 && 3.43e-05 & 3.02 && 3.48e-05 & 3.00 && 3.43e-05 & 3.02  && 4.33e-04 & 2.54 && 4.33e-04 & 2.54 \\
&1.56e-02 && 4.26e-06 & 3.01 && 4.35e-06 & 3.00 && 4.26e-06 & 3.01  && 7.52e-05 & 2.52 && 7.52e-05 & 2.52 \\    \hline
\multirow{6}{*}{3}  
&5.00e-01 && 2.62e-02 & -    && 2.42e-02 & -    && 2.62e-02 & -    && 1.13e-01 & -    && 1.13e-01 & - \\
&2.50e-01 && 2.13e-03 & 3.62 && 1.82e-03 & 3.73 && 2.13e-03 & 3.62 && 1.28e-02 & 3.14 && 1.28e-02 & 3.14 \\
&1.25e-01 && 1.30e-04 & 4.04 && 1.46e-04 & 3.64 && 1.30e-04 & 4.04 && 1.05e-03 & 3.61 && 1.05e-03 & 3.61 \\
&6.25e-02 && 7.61e-06 & 4.09 && 1.10e-05 & 3.73 && 7.61e-06 & 4.09 && 7.53e-05 & 3.80  && 7.53e-05 & 3.80 \\
&3.12e-02 && 4.54e-07 & 4.07 && 7.49e-07 & 3.87 && 4.54e-07 & 4.07 && 5.36e-06 & 3.81 && 5.36e-06 & 3.81 \\
&1.56e-02 && 2.76e-08 & 4.04 && 4.87e-08 & 3.94 && 2.76e-08 & 4.04 && 4.00e-07 & 3.75 && 4.00e-07 & 3.75 \\
\bottomrule
\end{tabular}
\caption{Experiment \ref{sec: Experiment3}. Dirichlet boundary conditions.}
\label{tab:HistoryConvergenceExperiment3}
\end{table}
\section{Concluding remarks and future work} \label{section:conclussions_and_ongoing_work}
Compared with mixed finite element methods \cite{23_Arnold,18_HWang}, the proposed HDG methods improve the order convergence of the rotational and divergence variables from $k-1/2$ to $k+1/2$. in the case of Dirichlet boundary conditions. Numerical results for electric and magnetic boundary conditions were consistent with those in \cite{23_Arnold,18_HWang}. Note that, in contrast to the FEEC-based hybrid methods of \cite{14_JGuzman}, the present HDG formulation avoids compatibility constraints associated with discrete De Rham complexes while at least the same
accuracy. Moreover, unlike the nonconforming hybrid methods of \cite{6_Barker,7_Barker}, the HDG directly controls both the rotational and divergence variables and yielded a symmetric positive-definite system.

We are interested in improving the theoretical convergence rates, motivated by the numerical results presented in this work and by extending ideas from \cite{8_Cockburn,37_Shukai,38_Oikawa}. Furthermore, it is important to note that although we primarily report results using standard diagonal triangular meshes, similar optimal behavior was observed for other types of quasi-uniform grids, such as Chevron, Union Jack, and Peterson-type meshes. This objective is particularly relevant due to the connections, arising in the design of suitable projections for the error analysis, between our HDG framework and methodologies developed for Maxwell equations \cite{40_MaxwellHDG,9_MSolano,41_Cockburn,37_Shukai}, the Stokes system \cite{36_Cui,34_Cockburn,3_BCockburn,28_Cockburn}, eddy current problems \cite{31_Osorio}, and shallow water equations \cite{MR3576573}. We are also interested in examining the role of the $M$-decomposition for the vector Laplacian, which can facilitate the construction of approximation spaces that yield optimal convergence in more general HDG formulations (see, e.g., \cite{1P_Monk,21_Cockburn,22_Cockburn,20_Cockburn,2_BCockburn}). Finally, we want to extend the method to three dimensions.

\appendix
\section{Characterization of the hybridizations}
\label{sec:Appendix}
In this section, we present the characterizations of the Hybridizations of Type I and Type II. The proofs are similar to that given in Section \ref{section:Proofs_Hybridization} and are therefore omitted.
\subsection*{Hybridization of Type I}

\begin{theoremn} \label{Teo:CharacterizationHDGsolutionrestI} 
 The solution of the Hybridization of Type I method verifies 
\begin{alignat*}{4}
(\sigma_h,\phi_h,\velocity_h)=(\Sigma,\Phi,\vect{U})=(\Sigmasecond,\Phisecond,\Usecond)+(\Sigmasecondf,\Phisecondf,\Usecondf),    
\end{alignat*}
where, on the element $K\in \Th,$ for any $\etab^t\in \Lb^2(\partial K),$ $\lambda\in L^2(\partial K)$, the function $(\Sigmasecondeta,\Phisecondeta,\Usecondeta)$ in $\PK\times \PK \times \PKb$ is the solution of the following local problem
\begin{subequations} 
\begin{alignat*}{4} 
\Kprod{\Sigmasecondeta}{\chi}-\Kprod{\Usecondeta}{\curl \, \chi}+ \pKprod{ \etab^t \cdot \normal^\perp}{\chi
} & =0,   \quad \forall \chi \in \PK,\\  
\Kprod{\Phisecondeta}{\varphi}-\Kprod{\Usecondeta}{\grad\varphi}+\pKprod{\widehat{\Ub}_{\etab,\lambda}\cdot \normal}{\varphi
} & =0, \quad \forall \varphi \in \PK ,& 
\\ 
\Kprod{\Sigmasecondeta}{\rot \, \zb}+\pKprod{\widecheck{\Sigma}_{\etab^t} }{\zb\cdot \normal^\perp
}   - 
\Kprod{\Phisecondeta}{\divergence \zb}
+\pKprod{\lambda}{\zb \cdot \normal}
& = 0, \quad \forall \zb \in \PKb,
\end{alignat*}
where 
\begin{alignat*}{4}
\widecheck{\Sigma}_{\etab^t}&=\Sigmasecondeta+\alpha^{-1} (\Usecondeta \cdot \normal^\perp -\etab^t \cdot \normal^\perp), & \text{ on }\partial K,\\
\widehat{\Ub}_{\etab,\lambda}\cdot \normal&=\Usecondeta \cdot \normal+\tau (\Phisecondeta-\lambda), & \text{ on }\partial K.
\end{alignat*}
Besides, for any $\fb\in \Lb^2(K)$, then $(\Sigmasecondf,\Phisecondf,\Usecondf)$ in $\PK\times \PK \times \PKb$ is the solution of  
\begin{alignat*}{4} 
\Kprod{\Sigmasecondf}{\chi}-\Kprod{\Usecondf}{\curl \, \chi} & =0, & \forall \chi \in \PK,\\  
\Kprod{\Phisecondf}{\varphi}-\Kprod{\Usecondf}{\grad\varphi}+\pKprod{\widehat{\Ub}_{\fb}\cdot \normal}{\varphi
} & =0, & \forall \varphi \in \PK ,& 
\\ 
\Kprod{\Sigmasecondf}{\rot \, \zb}+\pKprod{\widecheck{\Sigma}_{\fb} }{\zb\cdot \normal^\perp
}   - 
\Kprod{\Phisecondf}{\divergence \zb}
& = \Kprod{\fb}{ \zb}, & \quad  \forall \zb \in \PKb,
\end{alignat*}
where
$ 
\widecheck{\Sigma}_{\fb}=\Sigmasecondf+\alpha^{-1} \Usecondf \cdot \normal^\perp,  \text{ on }\partial K$,
and 
$\widehat{\Ub}_{\fb}\cdot \normal=\Usecondf \cdot \normal+\tau \Phisecondf, \text{ on }\partial K.$
The pair $(\widecheck{\velocity}_h,\widehat{\phi}_h)\in \Nhperp \times \Mh$ is the solution of
\begin{alignat*}{4} 
a_h((\widecheck{\velocity}_h,\widehat{\phi}_h),(\mub^t,\mu))=l_h(\mub^t,\mu),
\end{alignat*}
with 
\begin{alignat*}{4} \nonumber   
a_h((\etab^t,\lambda),(\mub^t,\mu))&= \Thprod{\Sigmasecondeta}{\Sigmasecondmu}+\pThprod{\alpha^{-1}(\Usecondeta \cdot \normal^\perp - \etab^t \cdot \normal^\perp)}{(\Usecondmu \cdot \normal^\perp- \mub \cdot \normal ^\perp)} \\
&\quad + \Thprod{\Phisecondeta}{\Phisecondmu}  +\pThprod{\tau (\Phisecondeta - \lambda)}{(\Phisecondmu -\mu)}, \\
l_h(\mub^t,\mu)&= \Thprod{\fb}{\Usecondmu},  
\end{alignat*}
and $(\widecheck{\velocity}_h,\widehat{\phi}_h)$ minimizes the functional of energy
$\mathcal{J}_h(\mub^t,\mu)=\frac{1}{2}a_h((\mub^t,\mu),(\mub^t,\mu))-l_h(\mub^t,\mu)$
over the functions $\mub^t \in \Nhperp $ and $\mu \in \Mh $.
\end{subequations}
\end{theoremn}

\subsection*{Hybridization of Type II}

\begin{theoremn} \label{Teo:CharacterizationHDGsolutionrestII} 
The solution of Hybridization of Type II satisfies
\begin{alignat*}{4}
(\sigma_h,\phi_h,\velocity_h)=(\Sigma,\Phi,\vect{U})=(\Sigmathird,\Phithird,\Uthird)+(\Sigmathirdf,\Phithirdf,\Uthirdf),    
\end{alignat*}
where, on the element $K\in \Th,$ for any $\varsigma\in L^2(\partial K)$ and $\etab^n\in \Lb^2(\partial K),$ the function $(\Sigmathirdeta,\Phithirdeta,\Uthirdeta)$ in $ \PK\times \PK \times \PKb$ is the solution of the following local problem
\begin{subequations} 
\begin{alignat*}{4} 
\Kprod{\Sigmathirdeta}{\chi}-\Kprod{\Uthirdeta}{\curl \, \chi}+ \pKprod{ \widecheck{\Ub}_{\varsigma,\etab^n} \cdot \normal^\perp}{\chi
} & =0,   \quad \forall \chi \in \PK,\\  
\Kprod{\Phithirdeta}{\varphi}-\Kprod{\Uthirdeta}{\grad\varphi}+\pKprod{\etab^n\cdot \normal}{\varphi
} & =0, \quad \forall \varphi \in \PK ,& 
\\ 
\Kprod{\Sigmathirdeta}{\rot \, \zb}+\pKprod{\varsigma }{\zb\cdot \normal^\perp
}   - 
\Kprod{\Phithirdeta}{\divergence \zb}
+\pKprod{\widehat{\Phi}_{\etab^n}}{\zb \cdot \normal}
& = 0, \quad \forall \zb \in \PKb,
\end{alignat*}
where 
\begin{alignat*}{4}  
\widecheck{\Ub}_{\varsigma,\etab^n} \cdot \normal^\perp&=\Uthirdeta \cdot \normal^\perp+\alpha (\Sigmathirdeta-\varsigma), & \text{ on }\partial K,\\ 
\widehat{\Phi}_{\etab^n}&=\Phithirdeta+\tau^{-1} (\Uthirdeta \cdot \normal -\etab^n \cdot \normal), & \text{ on }\partial K.
\end{alignat*}
Furthermore, we have the following,  for any $\fb\in \Lb^2(K),$ then $(\Sigmathirdf,\Phithirdf,\Uthirdf)$ in $\PK\times \PK \times \PKb$ is the solution of  
\begin{alignat*}{4}
\Kprod{\Sigmathirdf}{\chi}-\Kprod{\Uthirdf}{\curl \, \chi} + \pKprod{\widecheck{\Ub}_\fb \cdot \normal^\perp}{\chi} & =0, & \forall \chi \in \PK,\\  
\Kprod{\Phithirdf}{\varphi}-\Kprod{\Uthirdf}{\grad\varphi} & =0, & \forall \varphi \in \PK ,& 
\\ 
\Kprod{\Sigmathirdf}{\rot \, \zb} - 
\Kprod{\Phithirdf}{\divergence \zb}
+\pKprod{\widehat{\Phi}_\fb}{\zb \cdot \normal} & = \Kprod{\fb}{ \zb}, & \quad \forall \zb \in \PKb,
\end{alignat*}
where
$
\widecheck{\Ub}_{\fb}\cdot \normal^\perp=\Uthirdf \cdot \normal^\perp+\alpha \Sigmathirdf,  \text{ on }\partial K,$ and $
\widehat{\Phi}_{\fb}=\Phithirdf+\tau^{-1} \Uthirdf \cdot \normal,  \text{ on }\partial K.
$
We have that $(\widecheck{\sigma}_h,\widehat{\velocity}_h)\in \Mh   \times \Nh$ is the solution of
\begin{alignat*}{4}  
a_h((\widecheck{\sigma}_h,\widehat{\velocity}_h),(\mu,\mub^n))=l_h(\mu,\mub^n),
\end{alignat*}
with 
\begin{alignat*}{4}    
a_h((\varsigma,\etab^n),(\mu,\mub^n))&= \Thprod{\Sigmathirdeta}{\Sigmathirdmu}
+\pThprod{\alpha(\Sigmathirdeta - \varsigma)}{(\Sigmathirdmu - \mu)} \\
&\quad +\Thprod{\Phithirdeta}{\Phithirdmu}+\pThprod{\tau^{-1} (\Uthirdeta \cdot \normal  - \etab^n\cdot \normal)}{(\Uthirdmu \cdot \normal  - \mub^n\cdot \normal)}, \\
l_h(\mu,\mub^n)&= \Thprod{\fb}{\Uthirdmu}, 
\end{alignat*}
and $(\widecheck{\sigma}_h,\widehat{\velocity}_h)$ minimizes the functional of energy
$\mathcal{J}_h(\mu,\mub^n)=\frac{1}{2}a_h((\mu,\mub^n),(\mu,\mub^n))-l_h(\mu,\mub^n)$ 
over the functions $\mu \in \Mh $ and $\mub^n \in \Nh $.
\end{subequations}
\end{theoremn} 
\bibliography{bibliography}{}
\bibliographystyle{siam}

\end{document}

%% file: packages.tex
\usepackage[foot]{amsaddr}
\usepackage{amssymb, amsbsy, exscale, amsmath, amsfonts, amscd}
\usepackage{geometry}   
\usepackage{tabularx}
\newcolumntype{C}{>{\centering\arraybackslash}X}
\usepackage{graphicx}
\usepackage{booktabs}
\usepackage{epstopdf}
\usepackage{bm}
\usepackage{threeparttablex}
\usepackage{multicol}
\usepackage{multirow}
\usepackage{subfigure}
\usepackage{tikz}
\usepackage{enumitem}
\usepackage{array}
\let\capstd\cap   %
\usepackage{mathabx}
\let\cap\capstd 
\usepackage{floatrow}            
\usepackage{mymacros} 
\usepackage{pdfsync}
\usepackage{hyperref}
\usepackage{algpseudocode} 
\usepackage[ruled,vlined]{algorithm2e}
\usepackage{wrapfig}
\usepackage{flushend}
\usepackage{xcolor}
\usepackage{pifont}
\usepackage{todonotes}
\usepackage{pgfplots}
\usepgfplotslibrary{groupplots}
\usepgfplotslibrary{colorbrewer}
\usepackage{nameref}
\pgfplotsset{compat=newest}
\DeclareCaptionLabelFormat{AppendixTables}{Table A.#2}
\DeclareCaptionLabelFormat{AppendixTables2}{Table B.#2}
\newsavebox{\measurebox}

\usepackage{lipsum}
\usepackage[skip=0.33\baselineskip]{caption}
\usepackage{adjustbox}
\usepackage{mwe}
\usepackage[most]{tcolorbox}
\usepackage{float}

\usepackage{booktabs}
\usepackage{tabularx}
\usepackage{array}

\newcolumntype{Y}{>{\raggedright\arraybackslash}X}